\numberwithin{equation}{section}
\newcommand{\abla}{\mathfrak{a}}
\newcommand{\ad}{\mathrm{ad}}
\newcommand{\bw}{\bigwedge}
\newcommand{\C}{\mathbb{C}}
\DeclareMathOperator{\Der}{\mathrm{Der}}
\DeclareMathOperator{\Diff}{\mathrm{Diff}}
\newcommand{\ddt}{\frac{\partial}{\partial t}}
\DeclareMathOperator{\End}{\mathrm{End}}
\newcommand{\heis}{\mathrm{H}}
\newcommand{\heisla}{\mathfrak{h}}
\newcommand{\fol}{\mathcal{F}}
\newcommand{\id}{\mathrm{Id}}
\newcommand{\K}{\mathbb{K}}
\let\ker\relax
\DeclareMathOperator{\ker}{\mathrm{Ker}}
\newcommand{\lap}{\Delta}
\newcommand{\lie}{\mathcal{L}}
\newcommand{\la}{\mathfrak{g}}
\newcommand{\N}{\mathbb{N}}
\newcommand{\Q}{\mathbb{Q}}
\newcommand{\pr}{\mathrm{pr}}
\newcommand{\R}{\mathbb{R}}
\newcommand{\tor}{\mathbb{T}}
\newcommand{\vf}{\mathfrak{X}}
\newcommand{\vol}{\mathrm{vol}}
\newcommand{\Z}{\mathbb{Z}}
\numberwithin{equation}{section}
\newtheorem{definition}{Definition}[section]
\newtheorem{lemma}[definition]{Lemma}
\newtheorem{theorem}[definition]{Theorem}
\newtheorem{proposition}[definition]{Proposition}
\newtheorem{corollary}[definition]{Corollary}
\newtheorem{remarkth}[definition]{Remark}
\newtheorem{example}[definition]{Example}
\newenvironment{remark}{\begin{remarkth}\upshape}{\hfill$\diamond$\end{remarkth}}
\newtheorem{question}{Question}
\title[Almost formality of quasi-Sasakian and Vaisman manifolds]{Almost formality of quasi-Sasakian and Vaisman manifolds with applications to nilmanifolds}
\author[B. Cappelletti-Montano]{Beniamino Cappelletti-Montano}
 \address{Dipartimento di Matematica e Informatica, Universit\`a degli Studi di
 Cagliari, Via Ospedale 72, 09124 Cagliari, Italy}
 \email{b.cappellettimontano@gmail.com}
\author[A. De Nicola]{Antonio De Nicola}
 \address{Dipartimento di Matematica, Universit\`a degli Studi di Salerno, Via Giovanni Paolo II 132, 84084 Fisciano, Italy}
 \email{antondenicola@gmail.com}
 \author[J.~C. Marrero]{Juan Carlos Marrero}
 \address{Unidad Asociada ULL-CSIC ``Geometr{\'\i}a Diferencial y Mec\'anica Geo\-m\'e\-tri\-ca''
Departamento de Matem\'aticas, Estad{\'\i}stica e Investigaci\'on Operativa, Facultad de Ciencias, Universidad de La Laguna, La Laguna, Tenerife, Spain}
 \email{jcmarrer@ull.edu.es}
\author[I. Yudin]{Ivan Yudin}
 \address{CMUC, Department of Mathematics, University of Coimbra, 3001-501 Coimbra, Portugal}
 \email{yudin@mat.uc.pt}
\subjclass[2010]{Primary 53C25, 53C55, 53D35, 55P62}
\keywords{Models, formal models, Vaisman manifolds, Sasakian manifolds, nilmanifolds, solvmanifolds, mapping torus, Boothby-Wang fibrations}
\thanks{This work was partially supported by CMUC -- UID/MAT/00324/2013, funded by the Portuguese
 Government through FCT/MEC and co-funded by the European Regional Development Fund through the Partnership Agreement PT2020 (A.D.N. and I.Y.),
 by MICINN (Spain) and European Union (Feder)  grant MTM 2015-64166-C2-2P (A.D.N. and J.C.M.)
 by Prin 2015 --- Real and Complex Manifolds; Geometry, Topology and Harmonic Analysis --- Italy and by GESTA and KASBA --- funded by Fondazione di Sardegna and Regione Autonoma della Sardegna (BCM)  --- Italy (B.C.M.), and by the exploratory research project in the frame of Programa Investigador FCT IF/00016/2013 (I.Y.). J.C.M. and B.C.M. acknowledge the Centre for Mathematics of the University of Coimbra in Portugal for its support and hospitality in a visit where a part of this work was done. The first and second authors are members of Indam - GNSAGA (Gruppo Nazionale per le Strutture Algebriche, Geometriche e le loro Applicazioni).}
\begin{document}

\begin{abstract}
We provide models
that are as close as possible to being formal
for a large class of compact manifolds that admit a
transversely K\"ahler structure, including Vaisman and quasi-Sasakian manifolds.
As an application we are able to classify the corresponding nilmanifolds.
\end{abstract}

\maketitle
\tableofcontents

\section{Introduction}

The existence of a K\"ahler or Sasakian structure on a compact manifold has
strong topological consequences. For instance, a compact K\"ahler manifold $M$
is formal \cite{DGMS} and satisfies the Hard Lefschetz Theorem \cite{hodge}. Recently it was discovered
that also compact Sasakian manifolds, which are considered as an odd dimensional
counterpart of K\"ahler manifolds, satisfy a Hard Lefschetz Theorem \cite{hlt}. Moreover,
in 2008 Tievsky \cite{tievsky} proved that in order to admit a Sasakian structure,
the de Rham algebra $\Omega^\ast (M)$ of a  compact  manifold $M$
has to be quasi-isomorphic as commutative differential graded algebra 
(CDGA for short) to an elementary Hirsch extension of
the basic cohomology algebra $H^*_B(M)$ of the canonical $1$-dimensional foliation defined
by the Reeb vector field. Both  the formality for K\"ahler
manifolds and the Tievsky model in the Sasakian case give topological
obstructions. For example, they imply that the only compact nilmanifolds that can be endowed
with a K\"ahler structure are the even dimensional tori (see \cite{hasegawa}), while the only Sasakian compact nilmanifolds are the
quotients of the generalized Heisenberg group $H(1, m)$ by  co-compact discrete
subgroups (see \cite{nilsasakian}).

Apart from K\"ahler metrics on a complex manifold $M$, an interesting class of Hermitian metrics on $M$ are the so-called locally conformal K\"ahler (l.c.K.) metrics (see \cite{DrOr}), that is, metrics which are conformally related with K\"ahler metrics in some open neighborhood of every point of $M$ (for a discussion of other interesting Hermitian metrics on $M$, which are locally conformal to special metrics, and the relation of them with l.c.K. structures, we remit to \cite{AnUg}).  
On the other hand, in the previous setting of compact nilmanifolds, it was conjectured by  Ugarte in \cite{Ug} that a compact nilmanifold endowed with a l.c.K. structure with non-zero Lee $1$-form is a compact quotient of $H(1, m) \times \mathbb{R}$. This conjecture is still open although some advances in the proof of it have been obtained by Bazzoni in \cite{bazzoni} for l.c.K. nilmanifolds with parallel Lee $1$-form, that is, for Vaisman nilmanifolds. However, not much is known about the topology of general Vaisman manifolds, which are
related both to K\"ahler and Sasakian geometry. A well-known fact is that the
difference between two consecutive Betti numbers $b_{k}(M) - b_{k-1}(M)$ is even
for each even integer $k\in\left\{1,\ldots,n\right\}$, where $\dim(M)=2n+2$.
Recently, the authors proved in \cite{hltvaisman} that a Hard Lefschetz Theorem holds for Vaisman
manifolds. This gives a topological obstruction stronger than the
aforementioned  property of the Betti numbers. The original motivation for the present
paper was to find a model for a compact Vaisman manifold. Indeed, we show that,
if $\eta$ denotes the anti-Lee $1$-form of a compact Vaisman manifold $M$, then the CDGA
\begin{equation}\label{modello}
\left( H^*_B(M,\fol) \otimes \bw \left\langle x,y \right\rangle, dx = 0, dy=
[d\eta]_B\right)
\end{equation}
is quasi-isomorphic to $\Omega^{\ast}(M)$, where $H^*_B(M,\fol)$ denotes the basic cohomology with respect to the  
flat $2$-dimensional foliation $\fol$ generated by the Lee and anti-Lee vector fields.
Recently, similar models are also considered in \cite{kasuya}.

In fact the above CDGA can be seen as an example of  a more general class that we call \emph{almost formal CDGAs}.
A CDGA $(B, d)$ is said to be \emph{almost formal} of  \emph{index}
$l$ if it is quasi-isomorphic to
the CDGA $\left(A\otimes \bw \left\langle y \right\rangle,dy=z\right)$, where $A$ is a
connected CDGA with the zero differential and $z\in A_2$ is a
closed homogeneous element satisfying  $z^l\not=0$, $z^{l+1}=0$.
Notice that compact Vaisman manifolds are in general not formal, as there are examples of compact Sasakian manifolds
that are not formal (see \cite{mariza}) and the product of a Sasakian manifold with the circle is Vaisman.
We show that there are many geometric realizations of almost formal CDGAs. Actually,
both Vaisman and quasi-Sasakian manifolds turn out to have almost formal models.
Further examples of almost formal manifolds are given by a generalization of Vaisman manifolds, which we call \emph{quasi-Vaisman}. 

A quasi-Vaisman manifold is a Hermitian manifold $(M,J,g)$ admitting a  closed $1$-form $\theta$ such that its metric dual is parallel,  holomorphic and
\begin{equation*}
d\Omega = \theta \wedge d\eta,
\end{equation*}
where $\Omega$ is the fundamental $2$-form of $M$ and $\eta=-\theta\circ J$. One proves that a  quasi-Vaisman manifold is Vaisman if and only if it is locally conformally symplectic  (l.c.s.) of the first kind (see \cite{lcs-Vai}; see also \cite{BaMa} for a recent discussion on l.c.s. manifolds of the first kind).

Having an almost formal model gives  strong constraints on the topology of the manifold. Indeed, we  are able to
 completely characterize almost formal nilmanifolds. In particular, we classify compact quasi-Vaisman and quasi-Sasakian nilmanifolds.  Namely, we prove that a compact quasi-Sasakian (resp. quasi-Vaisman) nilmanifold is a compact quotient of $H(1, m) \times \mathbb{R}^{2n}$ (resp. $H(1, m) \times \mathbb{R}^{2n+1}$), with $n \in \mathbb{N} \cup \{0\}$. In other words, we give a positive answer to a problem which is the counterpart to the Ugarte conjecture for compact quasi-Sasakian and quasi-Vaisman nilmanifolds.
 As a special case, we obtain a more conceptual  proof of the fact, recently proved in another way by Bazzoni \cite{bazzoni}, that a $(2n+2)$-dimensional compact nilmanifold $G/\Gamma$ admits a Vaisman structure if and only if $G$ is isomorphic to $H(1,n)\times\mathbb{R}$ as Lie groups.

We also provide examples of quasi-Vaisman solvmanifolds which are not Vaisman and are modelled on groups which are not allowed for quasi-Vaisman nilmanifold due to our classification.

In order to construct such examples we exploit the close relation between quasi-Vaisman and quasi-Sasakian manifolds. In some sense, Vaisman manifolds is to Sasakian geometry as quasi-Vaisman manifolds is to quasi-Sasakian geometry. Indeed we prove that the mapping torus of a quasi-Sasakian manifold is quasi-Vaisman. In turn, the mapping torus of a quasi-Vaisman manifold carries a canonical quasi-Sasakian structure. The interplays between these concepts motivates us to construct new examples of quasi-Sasakian manifolds.  In particular, we provide explicit examples of quasi-Sasakian solvmanifolds and the corresponding almost formal models.

Though our truly motivation was the study of Vaisman manifolds, the methodology for finding
the model \eqref{modello} for (quasi-)Vaisman manifolds can be applied in  more
general contexts. In fact, what is needed is just an infinitesimal action $f: \mathfrak{g}\rightarrow
\mathfrak{X}(M)$ of an  abelian Lie algebra on a Riemannian manifold $(M,g)$
such that every $f(a)$ is a Killing vector field, and the existence of an algebraic connection
$\chi : {\mathfrak{g}}^{\ast}\rightarrow \Omega^{1}(M)$ for $f$. This occurs for
Vaisman, and more in general for quasi-Vaisman manifolds, where we have the
$2$-dimensional Riemannian foliation defined by the commuting Lee and anti-Lee
vector fields. We also obtain a model for a mapping torus of a Riemannian manifold $(M,g)$ induced by an isometry in terms of the invariant forms of $M$.
The same methods could be applied also for a large class of
manifolds, such as locally conformal hyperK\"ahler manifold with parallel Lee form \cite{lchk}, which are foliated by a Riemannian $4$-dimensional flat foliation defined by the Lee vector field and its multiplication by the complex structures, or normal metric contact pairs \cite{pairs} (known also as Hermitian bicontact manifolds \cite{bicontact}), where one has $2$ commuting Killing Reeb vector fields, or $\mathcal S$-manifolds, where we have $s$ mutually commuting Killing vector fields \cite{blairs}.


\section{Elements of rational homotopy theory}
The aim of this section is to give an overview of results in rational homotopy
theory that we use in this article. The main references  for this section
are~\cite{felix_book} and~\cite{greub3}.

\subsection{Basics on CDGAs}\label{CDGAs}
Throughout this section $\K$ denotes a field of characteristic $0$. The most relevant
cases for geometry are when $\K$ is the field of rational numbers $\Q$, the field of real
numbers $\R$, or the field of complex numbers $\C$. In this paper only the cases
$\K=\R$, $\C$ will appear.

Let $V$, $W$ be  graded vector spaces over $\K$. We say that a $\K$-linear map $f\colon V \to W$
is homogeneous of degree $k$ if $f(V_m) \subset W_{m+k}$ for all $m\in \N$.

A \emph{derivation} of degree $k$ of a graded algebra  $A$ over $\K$ is a homogeneous map $D\colon A\to A$
where we consider $A$ as a graded vector space, such that
\begin{equation*}
D(a a') = D(a) a' + (-1)^{kl} a D(a')
\end{equation*}
for any $a\in A_l$.
We will denote the set of derivations of degree $k$ on $A$
by $\Der_k(A)$.

A \emph{commutative differential graded algebra } $\left( A,d \right)$
(CDGA for short) over
$\K$
is a graded algebra $A = \bigoplus_{k\ge 0} A_k$ over $\K$ such that for all $x\in A_k$
and $y\in A_l$ we have
\[
x y = \left( -1 \right)^{kl} y x,
\]
endowed with a differential $d$, that is $d\colon A \to A$ is a derivation of degree 1,
such that $d^2 =0$.
A \emph{morphism of CDGAs} is a morphism of graded algebras $f:A\to B$ such that
$d\circ f= f \circ d$.
An example of commutative differential graded algebra over $\R$ is given by the de Rham
complex $\left( \Omega^*\left( M \right), d \right)$ of differential forms on a
smooth manifold $M$, with the multiplication given by the wedge product.

The \emph{graded commutator} of a derivation $A$ of degree $k$ and a derivation $B$ of degree $l$ is defined by
\[
[A,B]= AB- \left( -1 \right)^{kl} BA.
\]

 A CDGA $\left( A,d \right)$ is
 \emph{directly quasi-isomorphic} to  a CDGA $\left( B,d
\right)$ if there is a
morphism of CDGAs $f\colon A\to B$ such that
$$
H^k\left( f \right)\colon H^k\left( A \right) \to H^k\left( B \right)
$$
 are isomorphisms for all $k\ge 0$.
Two CDGAs $\left( A,d \right)$ and $\left( B,d
\right)$ are \emph{quasi-isomorphic} if there is a chain of CDGAs $A=A_0$, $A_1$, \dots, $A_r = B$, such that either $A_j$ is directly
quasi-isomorphic to $A_{j+1}$ or
$A_{j+1}$ is directly quasi-isomorphic to  $A_j$ for every $0\le j\le r-1$.
We say that a CDGA $(A,d)$ is \emph{formal} if it is quasi-isomorphic to
$(H^*(A),0)$.

Given a graded vector space $V = \bigoplus_{k\ge 0} V_k$, we denote by $\bigwedge V$ the free
commutative graded algebra  generated by $V$. Recall that $\bigwedge V$, considered as an associative
algebra,  is the tensor product of the exterior
algebra constructed on $\bigoplus_{k\ge 0} V_{2k+1}$
with the symmetric algebra on $\bigoplus_{k\ge 0} V_{2k}$.
Thus every element in $\bw  V$ can be written as a sum of the elements of
the form
\begin{equation*}
v := v_1^{a_1} \dots v_m^{a_m}
\end{equation*}
where $v_j \in V_{k_j}$ and $a_j\in \N$
if $k_j$ is even while $a_j=1$ if $k_j$ is odd. We define the degree of $v$ to be
\begin{equation*}
a_1 k_1 + \dots + a_m k_m.
\end{equation*}

\subsection{Hirsch extensions of a CDGA}\label{V-CDGAs}
In order to define Hirsch extensions of a CDGA,
we will need the following notion which was extensively studied in Chapter~III
of \cite{greub3}.
\begin{definition}
Let $V$ be a graded vector space. A $V$-CDGA is a triple
$(A,d,f)$, where $(A,d)$ is a CDGA and $f\colon V\to A$ is a homomorphism of
graded vector spaces of degree $1$, such that $d\circ f =0$.
\end{definition}
\begin{remark}
In \cite{greub3}, $V$-CDGAs were called $V$-differential algebras and the  expression ``$V$-differential
algebra'' was abbreviated with ``$(V,\delta)$-algebra''.
\end{remark}

Suppose $( A,d,f)$ is a $V$-CDGA. Then $A\otimes \bw  V$ is a commutative graded algebra with the
multiplication defined by
\begin{equation*}
a\otimes v \cdot a' \otimes v' := \left( -1 \right)^{kl}(aa')\otimes vv'
\end{equation*}
where $v\in (\bw  V)_k$ and $a'\in A_l$.
We define the differential $d_f$ on $A\otimes \bw  V$ by
\begin{equation*}
d_f (a \otimes 1 ) := da \otimes 1,\ d_f (1 \otimes v) := f(v) \otimes 1,
\end{equation*}
where $v\in V$ and we extend $d_f$ to $A\otimes \bw  V$ by using Leibniz
rule.
We say that $\left(A\otimes \bw  V, d_f\right)$ is an \emph{Hirsch extension} of
$(A,d)$ by $V$ (along $f$).
If $V$ is a graded  vector space of dimension $m$ and $y_1$,\dots, $y_m$
is a homogeneous basis of $V$, then we will often specify $\left(A\otimes \bw
V, d_f \right)$ as
\begin{equation*}
\left(A\otimes \mbox{$\bw $}\,\left\langle y_1,\dots, y_m \right\rangle,\, dy_1
=f(y_1), \dots,\, dy_m = f(y_m)\right).
\end{equation*}
\begin{remark}
The underlying complex of $\left( A\otimes \bw V, d_f \right)$
was called a \emph{Koszul complex} in \cite{greub3}.
\end{remark}
Let $( A,d,f_A)$ and  $( B,d,f_B)$ be $V$-CDGAs. A homomorphism $h\colon
A\to B$ of CDGAs is called a \emph{homomorphism of $V$-CDGAs} if $ h\circ f_A
= f_B $.
Given a homomorphism $h\colon ( A,d,f_A) \to ( B,d, f_B)$ of $V$-CDGAs, we define
\begin{equation*}
\tilde{h} \colon \left(A\otimes \bw V,\, d_{f_A} \right) \to \left(B\otimes \bw V,\, d_{f_B}\right)
\end{equation*}
by
\begin{equation*}
\tilde{h}(a\otimes v_1 \wedge \dots\wedge v_k ) = h(a) \otimes v_1 \wedge \dots
\wedge v_k.
\end{equation*}
It is clear that $\tilde{h}$ is a homomorphism of CDGAs.
\begin{remark}\label{iso-iso}
Note that if $h$ is an isomorphism of $V$-CDGAs then, clearly, $\tilde{h}$ is an isomorphism of CDGAs, as $\tilde{h}^{-1}=\widetilde{h^{-1}}$.
\end{remark}

We will say that a homomorphism $h$ of $V$-CDGAs is a \emph{quasi-isomorphism} of
$V$-CDGAs if $h$ is a quasi-isomorphism of underlying CDGAs.
The following claim is a specialization of Proposition~4.3 in
\cite{sullivan}.
\begin{proposition}
\label{hf}
If $h$ is a quasi-isomorphism of $V$-CDGAs then $\tilde{h}$ is a quasi-isomorphism
of CDGAs.
\end{proposition}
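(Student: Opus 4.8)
The plan is to compare the two Hirsch extensions by means of the word-length filtration on $A\otimes\bw V$ and to reduce, on the associated graded, to the assumption that $h$ is a quasi-isomorphism. Write $\bw V=\bigoplus_{q\ge 0}\bw^q V$, where $\bw^q V$ is spanned by the products of exactly $q$ elements of $V$, and set $F_p:=\bigoplus_{q\le p}A\otimes\bw^q V$. On a generator $a\otimes v_1\wedge\dots\wedge v_q$ the differential $d_{f_A}$ produces the term $da\otimes v_1\wedge\dots\wedge v_q$, of word-length $q$, together with the terms obtained by replacing some $v_i$ with $f_A(v_i)\in A$, each of word-length $q-1$; hence $d_{f_A}(F_p)\subseteq F_p$, so $F_\bullet$ is an increasing filtration of $(A\otimes\bw V,d_{f_A})$ by subcomplexes, and similarly for $B$. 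Since $\tilde h(a\otimes v_1\wedge\dots\wedge v_q)=h(a)\otimes v_1\wedge\dots\wedge v_q$ preserves word-length, $\tilde h$ is a morphism of filtered complexes and induces a morphism between the associated spectral sequences.

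The next step is to identify the first pages. On the associated graded complex $E_0$ every word-length-lowering contribution to $d_{f_A}$ vanishes, so the induced differential is exactly $d\otimes\id$; thus $E_0\cong(A\otimes\bw V,d\otimes\id)$, and likewise for $B$, while $\tilde h$ induces $h\otimes\id$. Because $\K$ is a field, the functor $-\otimes\bw^q V$ is exact, so passing to cohomology gives $E_1\cong H(A,d)\otimes\bw V$, with $\tilde h$ inducing $H(h)\otimes\id$. By hypothesis $H(h)$ is an isomorphism, hence $\tilde h$ induces an isomorphism on $E_1$.

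Finally, the comparison theorem for spectral sequences shows that a filtered map which is an isomorphism on one page is an isomorphism on all subsequent pages and on the abutment, so that $H(\tilde h)$ is an isomorphism and $\tilde h$ is a quasi-isomorphism. The step that I expect to require the most care is the convergence of the two spectral sequences needed to pass from the $E_1$-isomorphism to the isomorphism on cohomology: the filtration is bounded below ($F_{-1}=0$) and exhaustive, and for the positively graded spaces $V$ occurring in our applications only finitely many $\bw^q V$ contribute in each fixed total degree, so the filtration is finite in every degree and strong convergence is automatic. If one wishes to avoid convergence bookkeeping altogether, the same computation applied to the mapping cone of $\tilde h$ works: filtering $\mathrm{cone}(\tilde h)$ by word-length yields $E_1\cong H(\mathrm{cone}(h))\otimes\bw V=0$, since $h$ is a quasi-isomorphism, whence $\mathrm{cone}(\tilde h)$ is acyclic and $\tilde h$ is a quasi-isomorphism.
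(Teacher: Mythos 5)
Your argument is correct, but note that it cannot coincide with the paper's, because the paper gives no proof of Proposition~\ref{hf} at all: it simply records that the statement is a specialization of Proposition~4.3 in \cite{sullivan}. Your word-length spectral sequence is the standard self-contained argument for statements of this type, and the essential points are all in place: the filtration $F_p=\bigoplus_{q\le p}A\otimes\bw^qV$ is a filtration by subcomplexes because the only word-length--lowering part of $d_{f_A}$ replaces a single generator $v_i$ by $f_A(v_i)\in A$; the associated graded differential is $d\otimes\id$; exactness of $-\otimes_{\K}\bw^qV$ over a field identifies $E_1$ with $H(A)\otimes\bw V$; and the induced map of $E_1$-pages is $H(h)\otimes\id$, an isomorphism by hypothesis. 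Your caution about passing from the $E_1$-isomorphism to the abutment is placed exactly where care is needed, and your resolution suffices for this paper, since every later use (Theorem~\ref{passage}, Corollary~\ref{substitution}, Theorem~\ref{chevalley}) has $V$ concentrated in degree $1$, where the filtration is finite in each total degree. Two small additions would make it airtight in the generality in which the proposition is stated (the paper's graded vector spaces allow $V_0\neq 0$, in which case word length is unbounded in a fixed total degree): first, the filtration is increasing, exhaustive and bounded below, so the classical convergence theorem together with the comparison theorem closes the argument with no finiteness hypothesis; second, your mapping-cone variant can be made to avoid spectral-sequence convergence entirely, since $E_1=0$ says each quotient $F_p/F_{p-1}$ of the filtered cone is acyclic, whence each $F_p$ is acyclic by induction using the short exact sequences $0\to F_{p-1}\to F_p\to F_p/F_{p-1}\to 0$, and cohomology commutes with the filtered colimit $\mathrm{cone}(\tilde h)=\bigcup_p F_p$. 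What your proof buys over the paper's treatment is a complete verification in place of an external citation; what the citation buys is brevity.
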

Two $V$-CDGAs $(A, d, f_A)$ and $(B, d, f_B)$ are \emph{quasi-isomorphic} if
there is a sequence of $V$-CDGAs
\begin{equation*}
( A,d, f_A ) = ( A_0,d, f_0) ,\
( A_1,d, f_1), \dots, ( A_r,d, f_r) = (  B,d , f_B ),
\end{equation*}
 such that for
every $1\le k\le r$ either there is a quasi-isomorphism of $V$-CDGAs
\begin{equation*}
h_k\colon (A_k,d,f_k) \to ( A_{k-1},d,f_{k-1})
\end{equation*}
or a quasi-isomorphism of $V$-CDGAs
\begin{equation*}
h_k\colon (A_{k-1},d,f_{k-1}) \to ( A_{k},d,f_{k}) .
\end{equation*}

Given a $V$-CDGA $( A,d,f)$, we will denote by $f^\#$ the composite
\begin{equation*}
V \xrightarrow{f} \ker( d\colon A\to A) \to H^*(A).
\end{equation*}

It is clear that if two $V$-CDGAs are quasi-isomorphic as $V$-CDGAs then they are
also quasi-isomorphic as CDGAs. The following converse result is Proposition~XI
in Chapter~III of \cite{greub3}.
\begin{proposition}
\label{Vqiso}
Let $V$ be a graded vector space such that $V_{2k}=0$ for all $k\ge 0$.
Suppose $( A,d, f_A)$ and $( B,d, f_B)$ are $V$-CDGAs that are
quasi-isomorphic as CDGAs. Let $\gamma\colon H^*(A) \to H^*(B) $ be the
isomorphism  induced by a chain of quasi-isomorphisms connecting $(A,d)$ and
$(B,d)$. If ${\gamma\circ f_A^\# = f_B^\#}$, then there is a chain of quasi-isomorphisms of $V$-CDGAs connecting
$(A,d)$ and $(B,d)$ such that the resulting induced isomorphism from
$H^*(A)$ to $H^*(B)$ is equal to $\gamma$.
\end{proposition}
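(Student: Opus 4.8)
The plan is to show that a $V$-CDGA is rigid enough that its quasi-isomorphism type is governed entirely by the underlying CDGA together with the cohomology-level datum $f^\#$: the cocycle-level map $f$ may always be deformed, without changing the $V$-CDGA quasi-isomorphism type, as long as $f^\#$ stays fixed. Two observations drive the argument. First, if $g\colon(A,d)\to(A',d)$ is \emph{any} morphism of CDGAs and $f\colon V\to A$ is a $V$-structure, then $g$ is tautologically a morphism of $V$-CDGAs $g\colon(A,d,f)\to(A',d,g\circ f)$, since the compatibility condition $g\circ f=g\circ f$ holds trivially. Second --- and this is the technical heart --- two $V$-structures on the same CDGA with the same image in cohomology give quasi-isomorphic $V$-CDGAs.

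I would isolate the second point as a lemma: if $f_0,f_1\colon V\to\ker(d\colon A\to A)$ satisfy $f_0^\#=f_1^\#$, then $(A,d,f_0)$ and $(A,d,f_1)$ are quasi-isomorphic as $V$-CDGAs. To prove it I use a path object. Let $C=\bw\langle t,s\rangle$ with $\deg t=0$, $\deg s=1$, $dt=s$, $ds=0$; this is a contractible CDGA ($H^*(C)=\K$) with two augmentations $\varepsilon_0,\varepsilon_1\colon C\to\K$, evaluation at $t=0$ and $t=1$, both sending $s\mapsto 0$. Putting $P:=A\otimes C$ with the tensor-product differential $D$, Künneth shows the induced maps $\varepsilon_i\colon P\to A$ are CDGA quasi-isomorphisms. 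Since $f_0^\#=f_1^\#$, lifting the equalities $[f_0(v)]=[f_1(v)]$ to primitives on a basis of $V$ gives a degree-$0$ map $r\colon V\to A$ with $f_1-f_0=d\circ r$. I then set
\[
F(v)=f_0(v)\otimes(1-t)+f_1(v)\otimes t\pm r(v)\otimes s,
\]
with the sign fixed by the closedness requirement; a direct computation using $d_Af_i=0$ and $f_1-f_0=dr$ shows $D\circ F=0$, so $(P,D,F)$ is a $V$-CDGA. By construction $\varepsilon_i\circ F=f_i$, so $\varepsilon_0,\varepsilon_1$ are quasi-isomorphisms of $V$-CDGAs, and the zig-zag $(A,d,f_0)\xleftarrow{\varepsilon_0}(P,D,F)\xrightarrow{\varepsilon_1}(A,d,f_1)$ proves the lemma. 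Because $\varepsilon_0$ and $\varepsilon_1$ are chain homotopic they induce the same isomorphism $H^*(P)\cong H^*(A)$, so this zig-zag induces the identity on $H^*(A)$.

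Combining the two observations settles a single direct quasi-isomorphism: given $g\colon(X,d)\to(Y,d)$ with $H^*(g)\circ f_X^\#=f_Y^\#$, the morphism $g$ realizes $(X,d,f_X)\to(Y,d,g\circ f_X)$, and since $(g\circ f_X)^\#=H^*(g)\,f_X^\#=f_Y^\#$ the lemma connects $(Y,d,g\circ f_X)$ to $(Y,d,f_Y)$; the resulting chain induces $H^*(g)$ on cohomology. To finish, I run this along the given chain $A=A_0,\dots,A_r=B$. Writing $\gamma_k\colon H^*(A)\to H^*(A_k)$ for the isomorphism induced by the partial chain (so $\gamma_0=\id$, $\gamma_r=\gamma$), I choose cocycle-level lifts $f_k\colon V\to A_k$ of $\gamma_k\circ f_A^\#$, with $f_0=f_A$ and $f_r=f_B$ --- legitimate because $\gamma_r\circ f_A^\#=\gamma\circ f_A^\#=f_B^\#$ by hypothesis. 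The relation $\gamma_{k-1}=H^*(g_k)\circ\gamma_k$ or $\gamma_k=H^*(g_k)\circ\gamma_{k-1}$, according to the orientation of the $k$-th arrow, is exactly the cohomology compatibility needed to apply the single-step result to each adjacent pair $(A_{k-1},d,f_{k-1})$, $(A_k,d,f_k)$. Concatenating the resulting $V$-CDGA zig-zags produces a chain of $V$-CDGA quasi-isomorphisms connecting $(A,d,f_A)$ and $(B,d,f_B)$.

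It remains to check that the induced isomorphism is $\gamma$. Each single-step replacement contributes $H^*(g_k)$ --- the homotopy zig-zags from the lemma contribute the identity, as noted above --- so the total induced map is the same composite of the $H^*(g_k)$ that defines $\gamma$. The main obstacle is the lemma: building a closed interpolating $V$-structure $F$ on the path object and verifying that the evaluations stay $V$-CDGA morphisms inducing equal maps on cohomology; everything else is bookkeeping with the induced isomorphisms. Finally, the hypothesis $V_{2k}=0$ is precisely what makes $f^\#$ a complete invariant of the $V$-structure up to $V$-CDGA quasi-isomorphism in the framework of \cite{greub3}, which is the property the argument exploits.
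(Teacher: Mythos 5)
Your proof is correct, but it takes a genuinely different route from the paper --- necessarily so, since the paper supplies no argument of its own: its ``proof'' is a citation, stating that the result is Proposition~XI in Chapter~III of \cite{greub3}. Your route is the standard homotopy-theoretic one. The heart is your lemma that two $V$-structures $f_0,f_1$ on one CDGA $(A,d)$ with $f_0^\#=f_1^\#$ yield $V$-CDGAs connected by quasi-isomorphisms inducing the identity on $H^*(A)$: the path object $P=A\otimes\bw\langle t,s\rangle$ (with $\deg t=0$, $\deg s=1$, $dt=s$) and the interpolation $F(v)=f_0(v)\otimes(1-t)+f_1(v)\otimes t+(-1)^{|v|}r(v)\otimes s$ do exactly this, since $D\circ F=0$ holds with that sign, and both evaluations $\varepsilon_0,\varepsilon_1$ are $V$-CDGA morphisms splitting the K\"unneth quasi-isomorphism $A\hookrightarrow P$, hence are quasi-isomorphisms inducing the \emph{same} map on cohomology. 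The reduction of a single directed quasi-isomorphism $g$ to this lemma via the tautological morphism $(X,d,f_X)\to(Y,d,g\circ f_X)$, the choice of cocycle-level lifts $f_k$ of $\gamma_k\circ f_A^\#$ along the chain (legitimized at the last step by the hypothesis $\gamma\circ f_A^\#=f_B^\#$), and the check that the concatenated zig-zag induces precisely $\gamma$ are all sound. What your approach buys over the citation: it is self-contained, and it nowhere uses the hypothesis $V_{2k}=0$, so it proves the stronger statement for arbitrary graded $V$ and thereby substantiates the paper's own remark that this condition is ``most surely redundant.'' For the same reason your closing sentence is off the mark: the parity hypothesis is not ``precisely what makes $f^\#$ a complete invariant\ldots which is the property the argument exploits''---your argument never invokes it---so that sentence should be deleted, or replaced by the observation that your proof shows the hypothesis to be unnecessary.
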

\begin{remark}
The condition $V_{2k}=0$ for $k\ge 0$ is most surely redundant, but it was
difficult to find an appropriate reference, and as we will use only the case
when $V$ is concentrated in degree $1$, we decided to not pursue the case of
general $V$ in this article.
\end{remark}
Now, combining Proposition~\ref{hf} and Proposition~\ref{Vqiso}, we get
\begin{theorem}\label{passage}
Let $V$ be a graded vector space concentrated in odd degrees.
Suppose $( A,d, f_A)$ and $( B,d, f_B)$ are $V$-CDGAs that are
quasi-isomorphic as CDGAs.  Denote by $\gamma\colon H^*(A) \to
H^*(B)$ the isomorphism in cohomology induced by the chain of
quasi-isomorphisms between $(A,d)$ and $(B,d)$.
If $\gamma \circ f_A^\# = f_B^\#$ then $\left(A\otimes \bw V, d_{f_A}\right)$ and
$\left(B\otimes \bw V, d_{f_B}\right)$ are quasi-isomorphic.
\end{theorem}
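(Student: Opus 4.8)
The plan is to deduce the statement directly from Proposition~\ref{hf} and Proposition~\ref{Vqiso}, since the substantive work has already been isolated into those two results; what remains is a functoriality argument applied along a chain.

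First I would observe that the hypothesis that $V$ is concentrated in odd degrees is exactly what makes Proposition~\ref{Vqiso} applicable: it guarantees $V_{2k}=0$ for all $k\ge 0$. Together with the assumption $\gamma\circ f_A^\# = f_B^\#$, Proposition~\ref{Vqiso} then supplies a chain of $V$-CDGAs
\begin{equation*}
(A,d,f_A) = (A_0,d,f_0),\ (A_1,d,f_1),\ \dots,\ (A_r,d,f_r) = (B,d,f_B),
\end{equation*}
in which consecutive terms are connected by quasi-isomorphisms of $V$-CDGAs $h_k$, pointing in one direction or the other, and such that the induced isomorphism $H^*(A)\to H^*(B)$ coincides with $\gamma$. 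The statement about $\gamma$ is not needed for the conclusion we want here, but it records that the chain is compatible with the prescribed cohomology isomorphism.

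Next, I would apply the Hirsch-extension construction $h\mapsto\tilde h$ to each map in this chain. For every $k$, Proposition~\ref{hf} tells us that since $h_k$ is a quasi-isomorphism of $V$-CDGAs, the induced map $\tilde h_k$ is a quasi-isomorphism of the underlying CDGAs $\left(A_k\otimes\bw V,\,d_{f_k}\right)$. Because the $\tilde h_k$ point in the same directions as the $h_k$, concatenating them yields a chain of directly quasi-isomorphic CDGAs from $\left(A_0\otimes\bw V,\,d_{f_0}\right) = \left(A\otimes\bw V,\,d_{f_A}\right)$ to $\left(A_r\otimes\bw V,\,d_{f_r}\right) = \left(B\otimes\bw V,\,d_{f_B}\right)$. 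By the definition of quasi-isomorphism of CDGAs, this establishes that the two Hirsch extensions are quasi-isomorphic, which is the assertion.

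As for obstacles, there is essentially no new difficulty in this last step: the content is entirely packaged inside the two cited propositions, with Proposition~\ref{Vqiso} (Greub's Proposition~XI) providing the delicate part, namely the promotion of a CDGA quasi-isomorphism satisfying $\gamma\circ f_A^\#=f_B^\#$ to a chain of $V$-CDGA quasi-isomorphisms. The only point requiring care in the present deduction is bookkeeping: one must check that the orientations of the $h_k$ are preserved by the functor $h\mapsto\tilde h$ and that the endpoints of the extended chain are identified correctly, both of which are immediate from the definitions of $\tilde h$ and of the differential $d_f$.
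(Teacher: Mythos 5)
Your proposal is correct and is exactly the paper's argument: the paper proves Theorem~\ref{passage} simply by stating that it follows from combining Proposition~\ref{hf} and Proposition~\ref{Vqiso}, and your write-up supplies precisely the bookkeeping (applying $h\mapsto\tilde h$ term by term along the chain produced by Proposition~\ref{Vqiso}) that this combination entails.
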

Theorem~\ref{passage} becomes very useful in the case of a formal CDGA.
\begin{corollary}
\label{substitution}
Suppose $V$ is a graded vector space concentrated in odd degrees and $(
A,d,f)$ is a $V$-CDGA such that $(A,d)$ is a formal CDGA. Then $\left(A\otimes \bw
V, d_f \right)$ is quasi-isomorphic to $\left(H^*(A)\otimes \bw V, d_{f^\#}\right)$.
\end{corollary}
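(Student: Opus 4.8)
The plan is to deduce the corollary as a direct application of Theorem~\ref{passage}, taking for the two $V$-CDGAs the given triple $(A,d,f)$ together with $(H^*(A),0,f^\#)$. First I would verify that the latter is genuinely a $V$-CDGA: the pair $(H^*(A),0)$ is a CDGA with vanishing differential, and $f^\#\colon V\to H^*(A)$ is a degree-$1$ homomorphism of graded vector spaces by its very construction as the composite $V\xrightarrow{f}\ker(d)\to H^*(A)$; since the differential on $H^*(A)$ is zero, the required identity $0\circ f^\#=0$ is automatic. The hypothesis that $(A,d)$ is formal means exactly that $(A,d)$ and $(H^*(A),0)$ are quasi-isomorphic as CDGAs, so the first two hypotheses of Theorem~\ref{passage} are already in place.

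Next I would unwind the compatibility condition $\gamma\circ f_A^\#=f_B^\#$. Here $f_A^\#=f^\#$ by definition, while $f_B^\#=(f^\#)^\#$ is again $f^\#$ once we identify $H^*(H^*(A),0)$ with $H^*(A)$: the differential being zero, the projection $\ker(0)\to H^*(H^*(A),0)$ is the identity, so no information is lost. Hence the condition to be checked collapses to $\gamma\circ f^\#=f^\#$, which would be immediate if the induced isomorphism $\gamma$ were the identity of $H^*(A)$.

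The one genuine obstacle is that the chain of quasi-isomorphisms witnessing formality induces some graded-algebra automorphism $\gamma\colon H^*(A)\to H^*(A)$ that need not a priori be the identity. I would remove this obstacle by observing that $\gamma^{-1}$, viewed as a self-map of the CDGA $(H^*(A),0)$, is an isomorphism of graded algebras commuting trivially with the zero differential, hence a (direct) quasi-isomorphism of CDGAs. Appending it to the original chain produces a new chain of quasi-isomorphisms connecting $(A,d)$ with $(H^*(A),0)$ whose induced isomorphism in cohomology is $\gamma^{-1}\circ\gamma=\id$. Relative to this modified chain one has $\gamma=\id$, and therefore $\gamma\circ f_A^\#=f^\#=f_B^\#$.

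With all hypotheses verified for the modified chain, Theorem~\ref{passage} applies and yields that $\left(A\otimes\bw V,\,d_{f}\right)$ and $\left(H^*(A)\otimes\bw V,\,d_{f^\#}\right)$ are quasi-isomorphic, which is precisely the assertion of the corollary. The argument is essentially bookkeeping once the reduction to an identity-inducing chain has been made; accordingly, the step I would write out most carefully is the replacement of $\gamma$ by $\id$, since it is exactly what makes the compatibility hypothesis of Theorem~\ref{passage} hold on the nose.
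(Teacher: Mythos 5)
Your proposal is correct and follows essentially the same route as the paper, which presents Corollary~\ref{substitution} as an immediate consequence of Theorem~\ref{passage} applied to the $V$-CDGAs $(A,d,f)$ and $\left(H^*(A),0,f^\#\right)$, using formality to supply the chain of quasi-isomorphisms. Your additional step of appending $\gamma^{-1}$ (a CDGA automorphism of $\left(H^*(A),0\right)$) to the chain so that the induced isomorphism becomes the identity, and hence $\gamma\circ f_A^\#=f_B^\#$ holds on the nose, is precisely the bookkeeping the paper leaves implicit, and you execute it correctly.
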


\begin{definition}
A CDGA $(A,d)$ is called a \emph{finitely generated Sullivan algebra}, if there
is a sequence of CDGAs $(A_0,d_0) \cong (\K,0)$, \dots, $(A_n,d_n)
\cong(A,d)$,
such that $(A_{j+1}, d_{j+1})$ is an elementary extension of $(A_j,d_j)$ by a
finitely dimensional graded vector space.
\end{definition}
Note that the general definition of a Sullivan algebra can be given in a similar
manner, but it involves technicalities on infinite ordinals.
We will use the following
\begin{theorem}\label{sullivan}
\begin{enumerate}[(a)]
\item Let $(A,d)$ be a CDGA and $(B,d)$ a Sullivan algebra quasi-isomorphic to
 $(A,d)$. Then there is a  quasi-isomorphism $(B,d) \to (A,d)$.
\item Let $(A,d)$ be a CDGA. Then there is a Sullivan algebra $(B,d)$, which is quasi-isomorphic to $(A,d)$.
\end{enumerate}
\end{theorem}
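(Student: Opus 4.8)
Both assertions are the classical existence and lifting properties of Sullivan models, and I would prove them by the standard inductive constructions (see \cite[Ch.~12--14]{felix_book}).

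For part (b) the plan is to build a Sullivan algebra $(\bw V,d)$ together with a quasi-isomorphism $\rho\colon (\bw V,d)\to (A,d)$ by induction on the degree. I would construct an increasing chain of sub-CDGAs $(\bw V_{\le n},d)$ and compatible morphisms $\rho_n\colon (\bw V_{\le n},d)\to (A,d)$ such that $H^k(\rho_n)$ is an isomorphism for $k\le n$ and a monomorphism for $k=n+1$. Starting from $(\K,0)$, the passage from $n$ to $n+1$ is achieved by a single elementary Hirsch extension adjoining generators in degree $n+1$ of two types: closed generators mapped to cocycles of $A$ representing a basis of $\operatorname{coker} H^{n+1}(\rho_n)$, which restores surjectivity in degree $n+1$; and generators $w$ with $dw$ equal to cocycles generating $\ker H^{n+2}(\rho_n)$, mapped to primitives in $A$ of the images of those cocycles, which restores injectivity in degree $n+2$. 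Taking $(\bw V,d)=\bigcup_n(\bw V_{\le n},d)$ and $\rho=\varinjlim\rho_n$ yields the desired Sullivan algebra and quasi-isomorphism.

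For part (a) the key is the lifting property of Sullivan algebras: if $\psi\colon (C,d)\to (D,d)$ is a \emph{surjective} quasi-isomorphism and $\phi\colon (\bw V,d)\to (D,d)$ is any morphism out of a Sullivan algebra, then $\phi$ lifts through $\psi$ to a morphism $\tilde\phi\colon (\bw V,d)\to (C,d)$ with $\psi\circ\tilde\phi=\phi$. I would prove this by transfinite induction along the Sullivan filtration: at an elementary step adjoining a generator $y$ with $dy=z$ (where $z$ has already been lifted), one must produce $c\in C$ with $dc=\tilde\phi(z)$ and $\psi(c)=\phi(y)$; choosing any preimage $c_0$ of $\phi(y)$ under the surjection $\psi$, the element $dc_0-\tilde\phi(z)$ is a cocycle in $\ker\psi$, and acyclicity of $\ker\psi$ --- a consequence of $\psi$ being a quasi-isomorphism --- lets one correct $c_0$ by a coboundary to the required $c$.

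Finally I would deduce (a) from this lemma by propagating the identity of $(B,d)$ along the given zig-zag. Each forward-pointing quasi-isomorphism is composed directly; to cross a backward-pointing quasi-isomorphism one first factors it, by a relative Sullivan construction, as an inclusion followed by a surjective quasi-isomorphism, and then applies the lifting lemma. Tracking the maps up to the standard homotopy relation for CDGAs, the composite furnishes a direct morphism $(B,d)\to (A,d)$ whose effect on cohomology equals the isomorphism induced by the zig-zag; in particular it is a quasi-isomorphism. I expect the main obstacle to be precisely this reduction of a general zig-zag to a single liftable span: one needs a functorial factorization of an arbitrary CDGA morphism into a cofibration followed by a surjective quasi-isomorphism, together with the verification that the homotopies produced at each stage assemble consistently --- both of which rely on the same elementary-extension techniques used in part (b).
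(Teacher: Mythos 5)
The paper itself offers no proof of this theorem: it is stated as background and deferred to the standard references for the section, \cite{felix_book} and \cite{sullivan}, so there is no in-paper argument to compare yours against. Your proposal reconstructs the standard textbook proofs and is sound in outline: part (b) is the usual inductive construction of a (non-minimal) Sullivan model, and part (a) correctly reduces to the lifting lemma --- a map out of a Sullivan algebra lifts through a \emph{surjective} quasi-isomorphism, by induction along the Sullivan filtration and acyclicity of the kernel --- combined with a factorization of each backward arrow as an inclusion followed by a surjective quasi-isomorphism, and homotopy bookkeeping to convert the zig-zag into a single quasi-isomorphism $(B,d)\to(A,d)$.

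One concrete caveat on part (b). With your inductive hypothesis ($H^k(\rho_n)$ bijective for $k\le n$, injective for $k=n+1$), the inductive step as you state it can fail precisely in the non-simply-connected situation, which is the relevant one in this paper, since the theorem is applied to nilmanifolds. Indeed, after adjoining the degree-$(n+1)$ generators, a product $v\,x$ of a new closed generator $v$ with a closed degree-one element $x$ of $\bigwedge V_{\le n}$ is a new cocycle of degree $n+2$; since the image of $d$ on degree-$(n+1)$ elements of the extended algebra still lies in $\bigwedge V_{\le n}$, the class $[v\,x]$ is never zero, while its image $[\rho(v)][\rho(x)]$ in $H^{n+2}(A)$ may well vanish. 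So injectivity in degree $n+2$ need not survive the step. The standard repair --- and what \cite{felix_book} actually does for mere existence, as opposed to minimality --- is to organize the induction by stages rather than by cohomological degree: at each stage adjoin generators hitting all of the cokernel of $H^*(\rho_k)$ and killing all of $\ker H^*(\rho_k)$, then pass to the colimit, where surjectivity is immediate and any kernel class, being created at some finite stage, is killed at the next. This still satisfies the Sullivan filtration condition $d(V(k+1))\subset \bigwedge V(k)$ and avoids the degree-wise bookkeeping entirely. With that adjustment (or with countably many repetitions of your step within each fixed degree), your argument is complete.
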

The formal definition of a minimal Sullivan algebra can be found
in~\cite{felix_book}. In the finitely generated case the definition can be
phrased as follows.
\begin{definition}
 A finitely generated Sullivan algebra $(A,d)$ is called \emph{minimal (Sullivan)} if for
any finitely generated Sullivan algebra $(B,d)$ which is quasi-isomorphic to
$(A,d)$ and for each integer $k$, we have $\dim A_k \le \dim B_k$.
\end{definition}
For every CDGA $(A,d)$ there is a unique minimal (Sullivan) algebra $(B,d)$, which is quasi-isomorphic
to $(A,d)$. We will call $(B,d)$ the \emph{minimal model} of $(A,d)$.

%

Given a smooth manifold $M$, the de Rham complex  of differential forms $\left( \Omega^*\left( M \right),d  \right)$ endowed with the wedge product  is a CDGA.
We say that a CDGA $\left( A,d \right)$ is a  \emph{model} for a manifold $M$ if $\left( A,d \right)$ is quasi-isomorphic to
$\left( \Omega^*\left( M \right),d  \right)$. The minimal model of $\left( \Omega^*\left( M \right),d  \right)$ will be also called the  \emph{minimal model} of $M$.

\subsection{Operation of a Lie algebra in a CDGA}
\label{sec:operation}

To motivate the definition of an operation of a Lie algebra in a CDGA, we will start by considering the right action
of a Lie group $G$ on a smooth manifold $M$.
Then we have a homomorphism of topological groups
\begin{equation*}
F \colon G \to \Diff(M).
\end{equation*}
By passing to tangent spaces at the neutral element, we get a homomorphism of Lie algebras
\begin{equation*}
f \colon \la \to \vf(M),
\end{equation*}
where $f:= T_eF$  and $\vf(M)$ is the Lie algebra of vector fields on $M$.
This motivates the following definition.
\begin{definition}
Let $M$ be a smooth manifold and $\la$ a Lie algebra. An \emph{(infinitesimal)
right action of $\la$ on $M$} is a homomorphism of Lie algebras $f \colon
\la \to \vf (M)$.
\end{definition}
For every point $p\in M$, we denote by $\mathrm{ev}_p$ the map from
$\vf(M)$ to $T_p M$ defined by evaluating a vector field $X$ at the point
$p$.

We say that an action $f\colon \la \to \vf(M)$ is \emph{free} if for every point
$p\in M$
the composition
\begin{equation*}
\la \xrightarrow{f} \vf(M) \xrightarrow{\mathrm{ev}_p}  T_pM
\end{equation*}
is injective.
Given a free action $f\colon \la \to \vf(M)$
we construct a map of vector bundles
\begin{align*}
\hat{f} \colon  M \times \la &\to TM\\
(p,a) & \mapsto (p,\mathrm{ev}_p \circ f(a)).
\end{align*}
The image of $\hat{f}$ generates an
integrable
distribution $\mathcal{D}_f\subset TM$ of rank $k=\dim \la$. We denote the corresponding foliation by
$\fol_f$.

Let $f\colon \la \to \vf(M)$ be an action of a Lie algebra  $\la$
on  a smooth manifold $M$. Then for every $a\in \la$, we have the usual derivations
$i_{f}(a)=i_{f(a)}$ and $\lie_{f}(a)=\lie_{f(a)}$ on $\Omega^*(M)$
of degree $-1$ and $0$, respectively.
Thus we get two linear maps
\begin{equation*}
i_f \colon \la \to \Der_{-1}(\Omega^*(M)),\quad  \lie_f \colon \la \to
\Der_0(\Omega^*(M)).
\end{equation*}
These maps have the following properties
\begin{equation}\label{action}
\begin{aligned}
\lie_f([a,b]) &= \left[ \lie_f(a), \lie_f(b) \right]\\
\lie_f(a) &= \left[ i_f(a),d \right]\\
i_f(a)^2 &=0\\
i_f([a,b]) &= [\lie_f(a), i_f(b)]\\
\end{aligned}
\end{equation}
This motivates the following definition.
\begin{definition}
Let $\la$ be a Lie algebra and $(A,d)$ a CDGA. We say that a linear map $i\colon \la
\to \Der_{-1}(A)$ is an \emph{operation} of $\la$ in $(A,d)$ if for $\lie\colon \la \to
\Der_0(A)$ defined by $\lie(a) = \left[ i(a),d \right]$, the
equations~\eqref{action} hold upon erasing subscript $f$.
\end{definition}
\begin{remark}
Given an operation $i$, in the sequel we will write $i_a$ and $\lie_a$ instead of $i(a)$ and $\lie(a)$, respectively.
\end{remark}
Note that the third equation in \eqref{action} can be stated in a stronger form.
\begin{lemma}\label{anticommuting}
Let $i\colon \la \to \Der_{-1}(A)$ be an operation in a CDGA $(A,d)$. Then for
every $a$, $b\in \la$, we have
\begin{equation*}
[i_a, i_b] =0.
\end{equation*}
\end{lemma}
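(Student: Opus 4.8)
The plan is to exploit the fact that, for two derivations of degree $-1$, the graded commutator coincides (up to sign) with the anticommutator, and then to reduce the claim to the third relation in~\eqref{action} by a polarization argument. Concretely, since both $i_a$ and $i_b$ have degree $-1$, the definition of the graded commutator gives
\begin{equation*}
[i_a,i_b] = i_a i_b - (-1)^{(-1)(-1)} i_b i_a = i_a i_b + i_b i_a,
\end{equation*}
so that proving $[i_a,i_b]=0$ is the same as proving that $i_a$ and $i_b$ anticommute.

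First I would observe that the hypothesis furnishes, via the third equation of~\eqref{action} (with the subscript $f$ erased), the relation $i_c^2 = 0$ for \emph{every} $c\in\la$, not merely for the fixed elements $a$ and $b$. This is the crucial extra freedom: the identity $i_c^2=0$ holds identically in $c$, and $i\colon\la\to\Der_{-1}(A)$ is linear, so we may substitute $c = a+b$ and expand. Using $i_{a+b} = i_a + i_b$ and the bilinearity of composition, one gets
\begin{equation*}
0 = i_{a+b}^2 = (i_a+i_b)^2 = i_a^2 + i_a i_b + i_b i_a + i_b^2.
\end{equation*}
Since the two outer terms vanish by $i_a^2=0$ and $i_b^2=0$, what survives is exactly $i_a i_b + i_b i_a = 0$, which by the computation above is precisely $[i_a,i_b]=0$.

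There is no genuine obstacle here: the argument is the standard polarization trick, and the only point requiring care is the bookkeeping of signs in the graded commutator, together with the remark that $i_c^2=0$ must be read as an identity valid for all $c\in\la$ rather than only for basis elements. Once that is granted, linearity of $i$ does all the work, and no property of $\lie$ or $d$ is needed beyond what is already packaged in~\eqref{action}.
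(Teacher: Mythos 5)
Your proof is correct and is essentially identical to the paper's own argument: both use the polarization trick, applying $i_{a+b}^2=0$ together with linearity of $i$ and $i_a^2=i_b^2=0$ to conclude $i_a i_b + i_b i_a=0$. Your added remark that the graded commutator of two degree $-1$ derivations equals the anticommutator is a correct (and helpful) explicit check of the sign convention that the paper leaves implicit.
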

\begin{proof}
We have $i_{a+b}^2=0$. As $i$ is a linear map, this implies that
$(i_a +i_b)^2=0$. Using that $i_a^2 = i_b^2=0$, we get
$i_a i_b + i_b i_a=0$.
\end{proof}

Let $i\colon \la \to \Der_{-1}(A)$ be an operation in a CDGA $(A,d)$ and
$\lie = \left[ i, d \right]$. Then we define the CDGAs $(A_{\lie},d)$ and
$(A_{i,\lie},d)$ by
\begin{equation*}
A_\lie := \left\{\, a \in A \,\middle|\, \lie_x a =0, \forall x \in \la
\right\}; \quad A_{i,\lie}:= \left\{\, a \in A \,\middle|\, \lie_x a=i_x a =0,
\forall x \in \la \right\}.
\end{equation*}
Specializing to the case of an operation arising from an action $f\colon \la
\to \vf(M)$, we recover the invariant de Rham complex
$\Omega^*_{\lie_f}(M)$ and the basic de Rham complex
$\Omega^*_{i_f,\lie_f}(M) = \Omega^*_B(M,\fol_f)$.
\begin{theorem}\label{invariant}
Let $f\colon \la \to \vf(M)$ be an action on a compact Riemannian manifold
$(M,g)$.
Suppose $f(x)$ is a Killing vector field for every $x\in \la$. Then the
inclusion $h\colon \Omega^*_{\lie_f} (M) \hookrightarrow \Omega^*(M)$ induces an
isomorphism in cohomology, in other words, $h$ is a quasi-isomorphism of
CDGAs.
\end{theorem}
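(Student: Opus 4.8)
The plan is to run Hodge theory inside the invariant subcomplex, using the Killing hypothesis to force all the relevant operators to preserve invariance. Throughout I fix the Hodge star $*$, the codifferential $d^*$, the Laplacian $\lap=dd^*+d^*d$, Green's operator $G$ and the harmonic projector $\pi=\id-\lap G$ associated with $g$ on the compact manifold $M$. First I would observe that $\Omega^*_{\lie_f}(M)$ is a subcomplex of $(\Omega^*(M),d)$: since $\lie_f(x)=[i_f(x),d]$ it commutes with $d$, so $d$ preserves invariant forms, and $(\mathcal H^*,0)$ (harmonic forms with zero differential) is the standard Hodge model of $(\Omega^*(M),d)$.

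The step where the hypothesis enters is the commutation of $\lie_f(x)$ with the Hodge operators. Because $f(x)$ is Killing and $M$ is compact, its flow $\phi^x_t$ is a one-parameter group of orientation-preserving isometries, and isometries commute with $*$; so differentiating $(\phi^x_t)^*\,{*}={*}\,(\phi^x_t)^*$ at $t=0$ I get $\lie_f(x)\,{*}={*}\,\lie_f(x)$. Together with $[\lie_f(x),d]=0$ this shows that $\lie_f(x)$ commutes with $d^*=\pm\,{*}d{*}$, hence with $\lap$, hence with $G$ and $\pi$. It follows that each of $d^*$, $\lap$, $G$, $\pi$ maps $\Omega^*_{\lie_f}(M)$ into itself, so the Hodge decomposition restricts to the invariant complex: for invariant $\alpha$,
\[
\alpha=\pi\alpha+d\,(d^*G\alpha)+d^*(dG\alpha),
\]
with all three summands again invariant. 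In particular every harmonic form is invariant, which gives inclusions of complexes $(\mathcal H^*,0)\xrightarrow{\,j\,}(\Omega^*_{\lie_f}(M),d)\xrightarrow{\,h\,}(\Omega^*(M),d)$ whose composite $h\circ j$ is the Hodge inclusion.

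Next I would push the usual Hodge argument through the invariant complex. If $\alpha$ is invariant and closed, applying $d$ to the displayed identity gives $dd^*(dG\alpha)=0$, and pairing with $dG\alpha$ forces $d^*dG\alpha=0$; hence $\alpha=\pi\alpha+d(d^*G\alpha)$ with $d^*G\alpha$ invariant, so every closed invariant form is cohomologous in $\Omega^*_{\lie_f}(M)$ to its harmonic part. Conversely a harmonic form that is $d$-exact in $\Omega^*_{\lie_f}(M)$ is in particular exact in $\Omega^*(M)$, hence zero. Thus $j$ is a quasi-isomorphism, and since the Hodge theorem makes $h\circ j$ a quasi-isomorphism as well, I conclude that $h$ induces an isomorphism in cohomology.

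The step I expect to be the main obstacle — and the reason the naive approach fails — is that the infinitesimal action $f$ need not integrate to an action of a compact Lie group, so one cannot simply average forms over a group to produce invariant representatives of cohomology classes. The whole argument is arranged to sidestep this: the Killing condition is used precisely to make the Hodge-theoretic operators commute with the $\lie_f(x)$, which is exactly what is needed to carry out Hodge theory internally to $\Omega^*_{\lie_f}(M)$ without any averaging.
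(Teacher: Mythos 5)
Your strategy is sound, and it is genuinely different from the paper's proof, which is a one\mbox{-}line citation of two theorems of Goldberg: in effect you are reproving the cited results by internal Hodge theory. Most of the steps check out. The commutation of $\lie_{f(x)}$ with $d$, $d^*$, $\lap$ is correct, and from $[\lie_{f(x)},\lap]=0$ one does get commutation with $G$ and $\pi$ (because $\lie_{f(x)}$ preserves both $\ker\lap$ and $\im\lap$, hence commutes with the projection onto either summand and with the inverse of $\lap$ on $\im\lap$ --- worth one sentence, but standard). The restricted Hodge decomposition and your argument that a closed invariant form is cohomologous \emph{inside} $\Omega^*_{\lie_f}(M)$ to its harmonic part are exactly right; this is what gives injectivity of $H(h)$.

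The genuine gap is the sentence ``In particular every harmonic form is invariant.'' This does not follow from what precedes it: the restricted decomposition is a statement about forms that are \emph{already} invariant and says nothing about an arbitrary harmonic form. Yet this claim is indispensable --- without it the inclusion $j\colon(\mathcal{H}^*,0)\to(\Omega^*_{\lie_f}(M),d)$ is not even defined, and it is exactly what surjectivity of $H(h)$ rests on; it is also precisely the content of the Goldberg theorem the paper invokes, so the one non-trivial point has been asserted rather than proved. Fortunately it closes in one line with ingredients already in your write-up: if $\alpha$ is harmonic, then $\lie_{f(x)}\alpha = d\,i_{f(x)}\alpha + i_{f(x)}d\alpha = d\,i_{f(x)}\alpha$ is exact, and it is harmonic because $\lie_{f(x)}$ commutes with $\lap$; an exact harmonic form vanishes (the very fact you use in your final step, via $\langle d\,i_{f(x)}\alpha,\lie_{f(x)}\alpha\rangle = \langle i_{f(x)}\alpha, d^*\lie_{f(x)}\alpha\rangle = 0$), so $\lie_{f(x)}\alpha=0$. (Equivalently: the flow of $f(x)$ acts by isometries homotopic to the identity, so by uniqueness of harmonic representatives it fixes each harmonic form; differentiate in $t$.) With that sentence inserted your proof is complete. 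A cosmetic remark: to avoid orientability issues with the Hodge star, define $d^*$ directly as the formal $L^2$-adjoint of $d$; the relation $[\lie_{f(x)},d^*]=0$ then follows because each $(\phi^x_t)^*$ is an $L^2$-isometry commuting with $d$.
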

\begin{proof}
The result follows from \cite[Theorem~3.4]{goldberg-formula} and
\cite[Theorem~3.5]{goldberg-formula}.
\end{proof}

\subsection{Algebraic connection}\label{connection}
Let $\la$ be a finite dimensional Lie algebra.
We denote by $\ad^*$ the coadjoint representation of $\la$
\begin{equation*}
\ad^* \colon \la  \to \End(\la^*)
\end{equation*}
defined by
\begin{equation*}
\ad_a^*(\alpha)(b) = -\alpha([a,b]),
\end{equation*}
for $a,b\in \la$ and $\alpha \in \la^*$.
Let $i\colon \la \to \Der_{-1}(A)$ be an operation of a Lie algebra $\la$ in a CDGA $(A,d)$.
As usually, $\lie = [i,d]$.
Following Chapter VIII of \cite{greub3}, we say that
\[
\chi\colon \la^* \to A_1
\]
is an \emph{algebraic connection} for $i$ if
\begin{equation}\label{eq:connection}
\begin{aligned}
i_a (\chi(\alpha)) & = \alpha(a),\quad a \in \la,\ \alpha \in \la^*;\\
\lie_a \circ \chi & = \chi \circ \ad^*_a,\ a \in \la.
\end{aligned}
\end{equation}
\begin{remark}\label{standard-connection}
Note that for $A=\Omega^*(M)$, an algebraic connection
$\chi: {\mathfrak g}^* \to \Omega^1(M)$ corresponds to  a vector bundle
map $\widehat{\chi}: M \times {\mathfrak g}^{*} \to
T^*M$, given by $\widehat{\chi}\left( p, \alpha \right) = \chi(\alpha)_p$, from the
trivial vector bundle $M \times {\mathfrak g}^*$ to the cotangent bundle $T^*M$ of
$M$. So, we can consider the map $pr_2 \circ \widehat{\chi}^*: TM \to {\mathfrak g}$. This map is a standard connection for the infinitesimal action $f: {\mathfrak g} \to {\mathfrak X}(M)$. Define  a subbundle of the tangent bundle by
\[
x \in M \to H(x) = \{v \in T_xM \,|\, (pr_2 \circ \widehat{\chi}^*)(v) = 0\}.
\]
Then $H$ is an Ehresmann connection on $TM$.
In fact, the first condition in \eqref{eq:connection} implies that $TM =   
\mathcal{D}_f \oplus H$. Moreover, if we also denote by $\widehat{\chi}^*:
{\mathfrak X}(M) \to \Gamma(M \times {\mathfrak g})$ the corresponding morphism of
$C^{\infty}(M)$-modules between ${\mathfrak X}(M)$ and the space of sections of the
trivial vector bundle $M \times {\mathfrak g} \to M$ then $\widehat{\chi}^*$ is equivariant, that is,
\[
\widehat{\chi}^* \circ \lie_a = \ad_a \circ \widehat{\chi}^*, \; \; a \in \la.
\]
Here, $ad$ is the natural extension of the adjoint action of ${\mathfrak g}$ to the space of sections $\Gamma(M \times {\mathfrak g})$.
\end{remark}
Now, suppose that $(M, g)$ is a Riemannian manifold and that $f: {\mathfrak g} \to {\mathfrak X}(M)$ is a free action of ${\mathfrak g}$ on $M$. Denote by $\flat_g: TM \to T^*M$ the vector bundle isomorphism induced by $g$. Then, in the trivial vector bundle $M \times {\mathfrak g} \to M$, we can consider the bundle metric $\langle \cdot, \cdot \rangle$ given by
\[
\langle \xi, \eta \rangle = g(f(\xi), f(\eta))
\]
for $\xi, \eta \in \Gamma(M \times {\mathfrak g})$.
It is clear that $\langle \cdot, \cdot \rangle$ induces an isomorphism between the vector bundle $M \times {\mathfrak g} \to M$ and its dual bundle $M \times {\mathfrak g}^* \to M$
\[
\flat_{\langle \cdot, \cdot \rangle }: M \times {\mathfrak g} \to M \times {\mathfrak g}^*.
\]
We will denote by
\[
\sharp_{\langle \cdot, \cdot \rangle }:  M \times {\mathfrak g}^* \to M \times {\mathfrak g}
\]
the inverse morphism of $\flat_{\langle \cdot, \cdot \rangle }$.


\begin{theorem}\label{thm:connection}
Let $(M,g)$ be a Riemannian manifold, and $\la$ a Lie algebra. Suppose
that $f\colon \la\to \vf(M)$ is an action of $\la$ on $M$ such that every
$f(a)$ is a Killing vector field.
Define
\begin{equation}\label{alg-conn-Kill}
\widehat{\chi} = \flat_g \circ \widehat{f} \circ \sharp_{\langle \cdot, \cdot \rangle},
\end{equation}
where $\widehat{f}: M \times {\mathfrak g} \to TM$ is the vector bundle monomorphism induced by the action~$f$.
Then $\chi\colon \la^*\to \Omega^1(M)$ given by $\chi(\alpha)_p =
\widehat{\chi}(p,\alpha)$ is an algebraic connection for
 the operation $i_f\colon \la \to \Der_{-1}(\Omega^*(M))$.
\end{theorem}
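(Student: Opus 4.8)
The plan is to verify the two defining conditions \eqref{eq:connection} of an algebraic connection, after first rewriting $\chi$ in more concrete terms. For $\alpha\in\la^*$ I would let $s_\alpha\in\Gamma(M\times\la)$ be the section obtained by applying $\sharp_{\langle\cdot,\cdot\rangle}$ to the constant section $\alpha$ of $M\times\la^*$; by definition of $\sharp_{\langle\cdot,\cdot\rangle}$ it is characterized by $\langle s_\alpha,b\rangle=\alpha(b)$ for every $b\in\la$. Unwinding \eqref{alg-conn-Kill} then gives $\chi(\alpha)=\flat_g(\widehat f(s_\alpha))$, where $\widehat f(s_\alpha)$ is the vector field $p\mapsto f((s_\alpha)_p)_p$. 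The first condition is immediate from this description: at every point,
\[
i_a(\chi(\alpha))=\chi(\alpha)(f(a))=g(\widehat f(s_\alpha),\widehat f(a))=\langle s_\alpha,a\rangle=\alpha(a).
\]

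For the second condition I would first record three auxiliary facts. Writing $\ad_a$ for the natural extension of the adjoint action to $\Gamma(M\times\la)$ referred to in Remark~\ref{standard-connection}, characterized by $\ad_a(\phi\,b)=(f(a)\phi)\,b+\phi\,[a,b]$ for $\phi\in C^\infty(M)$ and $b\in\la$ constant, the fact that $f$ is a Lie algebra homomorphism makes $\widehat f$ equivariant, that is
\[
[f(a),\widehat f(\xi)]=\widehat f(\ad_a\xi),\qquad \xi\in\Gamma(M\times\la).
\]
Next, the Killing hypothesis $\lie_{f(a)}g=0$ is exactly the statement that $\lie_{f(a)}\circ\flat_g=\flat_g\circ[f(a),\cdot\,]$ on vector fields. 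Combining these two identities with $\lie_{f(a)}g=0$ once more shows that $\ad_a$ is skew for the bundle metric,
\[
f(a)\langle\xi,\eta\rangle=\langle\ad_a\xi,\eta\rangle+\langle\xi,\ad_a\eta\rangle,\qquad \xi,\eta\in\Gamma(M\times\la).
\]

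Granting these, the computation is short. Using the concrete form of $\chi$ together with the first two identities,
\[
\lie_a(\chi(\alpha))=\lie_{f(a)}\flat_g(\widehat f(s_\alpha))=\flat_g([f(a),\widehat f(s_\alpha)])=\flat_g(\widehat f(\ad_a s_\alpha)).
\]
Since $\widehat f$ is a monomorphism and $\flat_g$ an isomorphism, $\flat_g\circ\widehat f$ is fibrewise injective, so it suffices to prove $\ad_a s_\alpha=s_{\ad^*_a\alpha}$. Feeding $\xi=s_\alpha$ and a constant section $b\in\la$ into the skewness identity, and using that $\langle s_\alpha,b\rangle=\alpha(b)$ is constant and $\ad_a b=[a,b]$, I get
\[
0=f(a)\langle s_\alpha,b\rangle=\langle\ad_a s_\alpha,b\rangle+\alpha([a,b]),
\]
whence $\langle\ad_a s_\alpha,b\rangle=-\alpha([a,b])=(\ad^*_a\alpha)(b)=\langle s_{\ad^*_a\alpha},b\rangle$ for all $b\in\la$. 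Because $\langle\cdot,\cdot\rangle$ is non-degenerate on each fibre and the constant sections span the fibres, this forces $\ad_a s_\alpha=s_{\ad^*_a\alpha}$, which is precisely the second condition $\lie_a\circ\chi=\chi\circ\ad^*_a$.

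The content of the argument lies entirely in the Killing hypothesis, so the main obstacle is not a hard estimate but the correct bookkeeping of the flat bundle $M\times\la$: one must set up the Lie derivative $\ad_a$ on its sections so that $\widehat f$ is equivariant, and then check that $\lie_{f(a)}g=0$ descends to skewness of $\ad_a$ with respect to the induced bundle metric $\langle\cdot,\cdot\rangle$. Once these are in place, both conditions in \eqref{eq:connection} drop out of the non-degeneracy of $\langle\cdot,\cdot\rangle$.
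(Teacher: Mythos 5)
Your proof is correct, but it takes a genuinely different route from the paper's. The paper verifies the second condition in \eqref{eq:connection} pointwise on vector fields: it first checks it on $X=f(b)$, then introduces the orthogonal complement $H$ of $\mathcal{D}_f$, uses the Killing hypothesis to show that $\lie_{f(a)}$ preserves $\Gamma(H)$ and hence commutes with the orthogonal projections $P$ and $\bar{P}=\id-P$, observes that $\chi(\beta)$ annihilates $H$, and concludes by tensoriality that checking on $f(b)$ suffices. You never decompose $TM$ at all; instead you factorize $\chi=\flat_g\circ\widehat{f}\circ\sharp_{\langle\cdot,\cdot\rangle}$ and show that each factor is equivariant for the appropriate actions: $\flat_g$ intertwines $[f(a),\cdot\,]$ with $\lie_{f(a)}$ (this is precisely the Killing hypothesis), $\widehat{f}$ intertwines $\ad_a$ on $\Gamma(M\times\la)$ with $[f(a),\cdot\,]$ (this is $f$ being a Lie algebra homomorphism), and $\sharp_{\langle\cdot,\cdot\rangle}$ intertwines $\ad^*_a$ with $\ad_a$, which you extract from the skewness of $\ad_a$ for the bundle metric --- again a consequence of Killing --- together with non-degeneracy. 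The second condition then follows by composing, via the bundle-level identity $\ad_a s_\alpha=s_{\ad^*_a\alpha}$. What your approach buys is that it makes explicit, and actually proves, the equivariance statement $\widehat{\chi}^*\circ\lie_a=\ad_a\circ\widehat{\chi}^*$ that the paper only records in Remark~\ref{standard-connection}; it also avoids the projection bookkeeping, which is the least transparent part of the paper's argument. What the paper's approach buys is that it is entirely elementary and pointwise, never needing the operator $\ad_a$ on sections or the inverse $\sharp_{\langle\cdot,\cdot\rangle}$ beyond its defining property. Two small remarks: your appeal to injectivity of $\flat_g\circ\widehat{f}$ is unnecessary (to deduce $\lie_a\chi(\alpha)=\chi(\ad^*_a\alpha)$ from $\ad_a s_\alpha=s_{\ad^*_a\alpha}$ you simply apply the map, no injectivity needed), and your skewness identity follows from Killing plus the equivariance of $\widehat{f}$ alone, not from the $\flat_g$-identity; neither affects correctness.
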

\begin{proof}
If $a \in {\mathfrak g}$, $\alpha \in {\mathfrak g}^*$ and $p \in M$ then, using (\ref{alg-conn-Kill}), it follows that
\begin{equation}\label{first-condition}
i_a (\chi(\alpha))(p) = g_p(\widehat{f}(\sharp_{\langle \cdot, \cdot \rangle}(p,
\alpha), \widehat{f}(p, a)) = \langle\sharp_{\langle \cdot, \cdot \rangle}(p,
\alpha), (p,a) \rangle = \alpha(a).
\end{equation}
It is left to check that for any $X\in \vf(M)$, we have
\begin{equation}
\label{asd}
\left( \lie_{f(a)} \chi(\alpha) \right)(X) =  \chi (\ad_a^*\alpha) (X).
\end{equation}
Now, if $b \in {\mathfrak g}$, from (\ref{first-condition}), it follows that
\begin{equation}\label{second-condition-1}
\left({\mathcal L}_{f(a)}\chi(\alpha)\right) (f(b)) = -\chi(\alpha)[f(a), f(b)] = -\chi(\alpha)(f[a, b]) = -\alpha[a, b].
\end{equation}
This proves (\ref{asd}) for $X = f(b)$.


Now, denote the orthogonal complement of $\mathcal{D}_f$ in $TM$ by ${H}$. Let
$Z\in \Gamma({H})$. We are going to show that for all $a\in \la$, the Lie
derivative $\lie_{f(a)} Z$ of $Z$ is a section of ${H}$. For this it is enough to
verify that for all $b\in \la$, we have $g(\lie_{f(a)}Z, f(b)) =0$. Since
$f(a)$ is a Killing vector field, we get
\begin{align*}
0 &= (\lie_{f(a)}g) (Z,f(b)) = - g (\lie_{f(a)} Z, f(b))  - g(Z, [f(a),f(b)] )\\ &=
 - g(\lie_{f(a)}Z, f(b))  - g(Z,f([a,b])) = -g(\lie_{f(a)}Z,f(b)).
\end{align*}
Denote by $P$ the orthogonal projection from $TM$ on $\mathcal{D}_f$.
Let $\bar{P} := \id - P $.
Then for any $X\in \vf(M)$, we have $PX \in \Gamma(\mathcal{D}_f)$ and
$\bar{P}X \in \Gamma({H})$. Thus, for any $a\in \la$
\begin{equation*}
\lie_{f(a)} X = \lie_{f(a)}PX + \lie_{f(a)}\bar{P}X.
\end{equation*}
Since $\lie_{f(a)}PX \in \Gamma(\mathcal{D}_f)$ and $\lie_{f(a)} \bar{P}X\in
\Gamma({H})$, we get
\begin{equation}\label{eq:lie-P}
\lie_{f(a)}\circ P = P \circ \lie_{f(a)},\quad \lie_{f(a)} \circ \bar{P} =
\bar{P}\circ \lie_{f(a)} .
\end{equation}
From the definition of $\chi$, it follows that for any $\beta\in \la^*$, we have
$(\chi(\beta))(X) = \left( \chi(\beta) \right)(PX)$.
Thus by using \eqref{eq:lie-P} we have
\begin{align*}
(\chi(\ad_a^*\alpha))(X) &= (\chi(\ad_a^*\alpha))(PX),\\
(\lie_{f(a)}\chi(\alpha))(X) & = \lie_{f(a)} (\chi(\alpha)(X)) -\chi(\alpha) (\lie_{f(a)} X)\\
& = \lie_{f(a)} (\chi(\alpha)(PX)) -
\chi(\alpha)(P \lie_{f(a)} X)\\
&= \lie_{f(a)} (\chi(\alpha)(PX)) -
\chi(\alpha)( \lie_{f(a)}PX)  = (\lie_{f(a)} \chi(\alpha))(PX).
\end{align*}
This shows that we have to check \eqref{asd}
only for $X\in \Gamma(\mathcal{D}_f)$. Since both sides of \eqref{asd} are
tensorial in $X$, it is enough to check \eqref{asd} for $X$ of the form
$f(b)$, $b\in \la$. This, by (\ref{second-condition-1}), ends the proof.
\end{proof}
Under additional hypotheses on the action of $\la$ on $M$, the formula for the
algebraic connection $\chi$ can be made more explicit.
\begin{corollary}
\label{alg-conn-orthonormal}
Let $(M,g)$ be a Riemannian manifold, and $\la$ a Lie algebra. Suppose
that $f\colon \la\to \vf(M)$ is an action of $\la$ on $M$ such that every
$f(a)$ is a Killing vector field. Suppose that for every pair $a$, $b$ of
elements in $ \la$, the
functions $g\left( f(a), f(b) \right)$ are constant. Then
for any orthonormal basis $\left\{ e_i \right\}$ of $\la$ and the dual basis
$\left\{ e^i \right\}$ of $\la^*$
\begin{equation*}
\chi (e^i) := g(f(e_i), - )
\end{equation*}
gives an algebraic connection for the action $f \colon \la \to \vf(M) $.
\end{corollary}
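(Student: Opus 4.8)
The plan is to obtain this as a direct specialization of Theorem~\ref{thm:connection}. Under the extra hypothesis that the functions $g(f(a),f(b))$ are constant, the connection $\widehat{\chi}=\flat_g\circ\widehat{f}\circ\sharp_{\langle\cdot,\cdot\rangle}$ produced there reduces, on an orthonormal basis, to the simple formula $\chi(e^i)=g(f(e_i),-)$. So I would not reprove the two defining identities of an algebraic connection from scratch, but instead verify that the $\chi$ of the statement coincides with the $\chi$ of Theorem~\ref{thm:connection}, which is already known to be an algebraic connection for $i_f$.

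First I would observe that the constancy hypothesis makes the bundle metric $\langle\xi,\eta\rangle=g(f(\xi),f(\eta))$ into a fixed, fibrewise constant inner product on $\la$, so that ``orthonormal basis $\{e_i\}$ of $\la$'' is an unambiguous notion, meaning $g(f(e_i),f(e_j))=\delta_{ij}$ as functions on $M$. The key step is then to compute $\sharp_{\langle\cdot,\cdot\rangle}$ on the dual basis. Since $\flat_{\langle\cdot,\cdot\rangle}(p,e_i)=(p,\sum_j\langle e_i,e_j\rangle e^j)=(p,e^i)$ and this is independent of $p$, we get $\sharp_{\langle\cdot,\cdot\rangle}(p,e^i)=(p,e_i)$. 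Feeding this into \eqref{alg-conn-Kill} yields $\widehat{\chi}(p,e^i)=\flat_g(\widehat{f}(p,e_i))=\flat_g(f(e_i)_p)=g(f(e_i),-)_p$, which is exactly the claimed formula, and the corollary follows from Theorem~\ref{thm:connection}.

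For completeness I would also indicate the self-contained verification of \eqref{eq:connection}, since it exposes the role of the constancy hypothesis. The first identity is immediate: $i_{f(e_j)}\chi(e^i)=g(f(e_i),f(e_j))=\delta_{ij}=e^i(e_j)$, and it extends to all $a\in\la$ and $\alpha\in\la^*$ by linearity. For the second identity I would use that a Killing field $f(e_p)$ satisfies $\lie_{f(e_p)}(\flat_g Y)=\flat_g([f(e_p),Y])$ for every vector field $Y$, which follows from $\lie_{f(e_p)}g=0$. Applying this with $Y=f(e_i)$ and using that $f$ is a Lie algebra homomorphism gives $\lie_{f(e_p)}\chi(e^i)=\flat_g(f([e_p,e_i]))=\sum_k c^k_{pi}\,\chi(e^k)$, where $[e_p,e_i]=\sum_k c^k_{pi}e_k$, while on the other hand $\chi(\ad^*_{e_p}e^i)=-\sum_q c^i_{pq}\,\chi(e^q)$.

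The main obstacle, and the only place where the extra hypothesis is really used, is matching these two expressions, i.e.\ proving the antisymmetry $c^k_{pi}=-c^i_{pk}$. I would derive it by differentiating the constant function $g(f(e_i),f(e_k))$ along $f(e_p)$: the Killing identity gives $0=g(f([e_p,e_i]),f(e_k))+g(f(e_i),f([e_p,e_k]))$, and orthonormality turns the two terms into $c^k_{pi}$ and $c^i_{pk}$ respectively. This is precisely the relation needed, and it makes transparent why the clean formula $\chi(e^i)=g(f(e_i),-)$ fails to define a connection without the constancy assumption, whereas in the general setting of Theorem~\ref{thm:connection} it is the orthogonal projection onto $\mathcal{D}_f$ that repairs the second condition.
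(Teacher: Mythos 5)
Your proposal is correct and follows the same route the paper intends: the corollary is stated there without proof precisely as the specialization of Theorem~\ref{thm:connection}, and your computation that constancy of the $g(f(e_i),f(e_j))$ makes $\sharp_{\langle\cdot,\cdot\rangle}(p,e^i)=(p,e_i)$, hence $\widehat{\chi}(p,e^i)=\flat_g(f(e_i)_p)=g(f(e_i),-)_p$, is exactly the missing verification. Your additional self-contained check of both conditions in \eqref{eq:connection} — using the Killing identity $\lie_{f(e_p)}(\flat_g Y)=\flat_g([f(e_p),Y])$ and deriving the antisymmetry $c^k_{pi}=-c^i_{pk}$ by differentiating the constant functions $g(f(e_i),f(e_k))$ — is also correct and goes beyond what the paper records, usefully isolating where the constancy hypothesis enters.
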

\subsection{Chevalley model}\label{Chevalley}
If $\la$ is a reductive Lie algebra and the operation $i\colon \la \to
\Der_{-1}(A)$ admits an algebraic connection $\chi$, then Chevalley Fundamental Theorem
\cite[Theorem~I, sec.~9.3]{greub3} provides a model of $(A_\lie,d)$ constructed
from $(A_{i,\lie},d)$, the primitive elements in $H^*(\la)$, and the connection
$\chi$. In this article we will need only the case when $\la$ is abelian.
As the description and derivation of the Chevalley model drastically simplifies in this
situation, we will present only this case.

If $\la$ is abelian, then \eqref{eq:connection} imply that
\begin{equation*}
\lie_a (\chi(\alpha)) = 0,
\end{equation*}
for all $a\in \la$ and $\alpha\in \la^*$. Thus
\begin{equation}
\label{inv}
\chi(\la^*) \subset A_\lie.
\end{equation}
 We
define $\bar\chi\colon \la^*\to A_2$ to be the composition $d \circ \chi$. As
$d$ commutes with $\lie(a)$ for all $a\in \la$, we get that $\bar\chi
(\la^*)\subset A_\lie$. Moreover, for any $a\in \la$ and $\alpha\in \la^*$, we
have
\begin{equation*}
i_a  \bar\chi (\alpha) = i_a  d \chi(\alpha) = \lie_a \chi(\alpha) - d
i_a (\chi(\alpha)) = 0 - d (\alpha (a))  =0.
\end{equation*}
Thus $\bar\chi(\la^*) \subset A_{i,\lie}$ and we can consider $A_{i,\lie}$ as a $V$-CDGA for $V=\la^*$,
where $\la^*$  is seen as a graded vector space concentrated in degree $1$.
Therefore we can construct the CDGA
$\left(A_{i,\lie}\otimes \bw  \la^*,\, d_{\bar\chi}\right)$.
Then Chevalley Fundamental Theorem in this case can be formulated as follows.
\begin{theorem}\label{chevalley}
The application
\begin{equation*}
\begin{aligned}
f  \colon \left(A_{i,\lie}\otimes
\mbox{$\bw $}\,\la^*,d_{\bar\chi}\right) &\to
(A_{\lie},d)\\[2ex]
a \otimes \alpha_1 \wedge \dots \wedge \alpha_k & \mapsto a
\chi(\alpha_1) \dots \chi(\alpha_k).
\end{aligned}
\end{equation*}
is an isomorphism of CDGAs.
\end{theorem}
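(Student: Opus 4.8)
The plan is to show that $f$ is a morphism of CDGAs and then that it is bijective. That $f$ is a morphism of graded algebras is immediate from its definition together with the Koszul sign rule on $A_{i,\lie}\otimes\bw\la^*$, since each $\chi(\alpha)$ has degree $1$ and so commutes with $a'\in A_{i,\lie}$ with exactly the sign $(-1)^{|a'|}$ prescribed by the product on the Hirsch extension. To see that $f$ commutes with the differentials I would only check this on generators: on $a\otimes 1$ both sides give $da$, while on $1\otimes\alpha$ we have $f(d_{\bar\chi}(1\otimes\alpha))=f(\bar\chi(\alpha)\otimes 1)=d\chi(\alpha)=d(f(1\otimes\alpha))$, using $\bar\chi=d\circ\chi$. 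Since $d$ and $d_{\bar\chi}$ are derivations and $f$ is multiplicative, agreement on generators forces $f\circ d_{\bar\chi}=d\circ f$ everywhere. Finally, the image of $f$ lies in $A_\lie$ because $a\in A_{i,\lie}\subseteq A_\lie$, because $\chi(\la^*)\subset A_\lie$ by \eqref{inv}, and because $A_\lie$ is a subalgebra.

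For bijectivity, fix a basis $e_1,\dots,e_n$ of $\la$ with dual basis $e^1,\dots,e^n$ and write $\chi_j=\chi(e^j)\in A_1$, $i_j=i_{e_j}$, and $L_{\chi_j}$ for left multiplication by $\chi_j$. The first identity in \eqref{eq:connection} gives, as operators on $A$, the Clifford (CAR) relation $i_j L_{\chi_k}+L_{\chi_k}i_j=\delta_{jk}\,\id$; Lemma~\ref{anticommuting} gives $i_j i_k+i_k i_j=0$; and graded commutativity gives $L_{\chi_j}L_{\chi_k}+L_{\chi_k}L_{\chi_j}=0$. Moreover every $L_{\chi_j}$ and every $i_j$ preserves $A_\lie$: indeed $\lie_a\chi(\alpha)=0$ for abelian $\la$, so $L_{\chi_j}$ commutes with each $\lie_a$, while the fourth relation in \eqref{action} gives $[\lie_a,i_b]=i_{[a,b]}=0$. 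Writing $\chi_S=\chi_{j_1}\cdots\chi_{j_k}$ for $S=\{j_1<\dots<j_k\}$, the statement that $f$ is an isomorphism is exactly the assertion that $A_\lie=\bigoplus_{S\subseteq\{1,\dots,n\}}\chi_S\,A_{i,\lie}$, so I would prove this decomposition.

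Injectivity amounts to: if $\sum_S\chi_S a_S=0$ with $a_S\in A_{i,\lie}$ then all $a_S=0$. Because each $a_S$ is basic, $i_j(\chi_S a_S)=(i_j\chi_S)a_S$, and $i_j\chi_S$ deletes the index $j$ from $S$ (up to sign) when $j\in S$ and vanishes otherwise. Hence for a fixed $T$, applying the product $\prod_{j\in T}i_j$ kills every term with $S\not\supseteq T$ and, among the rest, fully contracts the $S=T$ term to $\pm a_T$. A downward induction on $|T|$ (starting from $T=\{1,\dots,n\}$, where only $\chi_{\{1,\dots,n\}}a_{\{1,\dots,n\}}$ survives) then yields $a_T=0$ for all $T$. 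For surjectivity I would use the operators $P_j:=\id-L_{\chi_j}i_j$. The CAR relations show that all the operators $L_{\chi_j}i_j$ pairwise commute and that each $P_j$ projects $A_\lie$ into $\ker i_j$; since they commute, $P:=\prod_j P_j$ is a projection onto $A_{i,\lie}$. The key device is the resolution of the identity $\id=\prod_{j=1}^n\bigl(P_j+L_{\chi_j}i_j\bigr)$, valid because all factors commute. Expanding it as $\id=\sum_{S}\bigl(\prod_{j\notin S}P_j\bigr)\bigl(\prod_{j\in S}L_{\chi_j}i_j\bigr)$ and using the CAR relations to pull all the $L_{\chi_j}$ with $j\in S$ to the left, each summand applied to $x\in A_\lie$ takes the form $\pm\,\chi_S\,a_S(x)$ with $a_S(x)=\pm\bigl(\prod_{l\notin S}P_l\bigr)i_{j_1}\cdots i_{j_k}x$; one checks $a_S(x)\in A_{i,\lie}$ (it is killed by $i_m$ for $m\in S$ since $i_m^2=0$, and for $m\notin S$ since $i_m P_m=0$). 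Thus $x=\sum_S\pm\chi_S a_S(x)$ lies in the image, proving surjectivity.

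The main obstacle is the combinatorial sign bookkeeping in this last step: establishing that $A$ is a free ``Clifford/Koszul'' module over the basic subalgebra $A_{i,\lie}$ on the odd generators $\chi_1,\dots,\chi_n$. Conceptually everything is formal once the CAR relations $i_j L_{\chi_k}+L_{\chi_k}i_j=\delta_{jk}\,\id$ are in place, but verifying that the commuting projections $P_j$ and the reordering of the products $\prod_{j\in S}L_{\chi_j}i_j=\pm\,L_{\chi_S}i_{j_1}\cdots i_{j_k}$ carry the correct signs is where care is needed; this is the algebraic heart of the Chevalley Fundamental Theorem in the abelian case.
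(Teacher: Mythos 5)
Your proof is correct, and it takes a genuinely different route from the paper's in how the several variables of $\la$ are handled. The paper first reduces the theorem to the case $\lie \equiv 0$ (Theorem~\ref{chevalleybis}) by restricting the operation to $B=A_\lie$ --- the step that your observation that $i_j$, left multiplication by $\chi_j$, and hence $P_j$ all preserve $A_\lie$ replaces --- and then peels off one variable at a time: it introduces the flag of subalgebras $B^{(k)}=\{b \mid i_{a_1}b=\dots=i_{a_k}b=0\}$, applies the one-variable Proposition~\ref{chevalley_base} (whose proof rests on the identity $b=D(\eta b)+\eta\,Db$) to obtain $B^{(k)}\cong B^{(k+1)}\otimes\bw\langle y_{k+1}\rangle$, promotes each step isomorphism to an isomorphism of the corresponding Hirsch extensions via the $V$-CDGA formalism (Remark~\ref{iso-iso}), and finally composes, $f=\tilde f_0\circ\tilde f_1\circ\dots\circ\tilde f_{n-1}$. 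Your argument is the simultaneous, non-iterative version of the same mechanism: the paper's identity $b=D(\eta b)+\eta\,Db$ is exactly your one-variable resolution of identity $\id=P_j+L_{\chi_j}i_j$, but instead of iterating, you expand $\id=\prod_j\bigl(P_j+L_{\chi_j}i_j\bigr)$ in one stroke, which is what forces the CAR relations and the commutation checks you flag as the heart of the matter (the operators $L_{\chi_j}i_j$ pairwise commute, and $P_l$ commutes with $L_{\chi_j}$ and with $i_m$ for $l\neq j$, $l\neq m$); these identities all do hold, so your sign bookkeeping goes through, and the downward induction on contractions $\prod_{j\in T}i_j$ for injectivity is likewise sound, since basic coefficients pass through the contractions. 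The trade-off is clear: the paper's induction keeps every step sign-free (a single odd variable at a time) but needs the auxiliary machinery of $V$-CDGAs to assemble the step isomorphisms, whereas your proof is self-contained, avoids that machinery entirely, and is uniform in $n$ (the paper's proof of Theorem~\ref{chevalleybis} even restricts attention to $n\ge 2$, the case $n=1$ being Proposition~\ref{chevalley_base} itself).
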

To prove Theorem~\ref{chevalley}, we first examine the following partial case.

Let $B$ be a CDGA and $i\colon \la\to \Der_{-1}( B)$ an operation on $B$ with an
algebraic connection $\chi\colon \la^*\to B_1$ such that
$\lie_a =0$ for all $a$. Then $B_\lie=B$ and $B_{i,\lie}=B_i$. Thus
Theorem~\ref{chevalley} implies
\begin{theorem}\label{chevalleybis}
The application
\begin{equation*}
\begin{aligned}
f  \colon \left(B_{i}\otimes
\mbox{$\bw $}\,\la^*,d_{\bar\chi}\right) &\to
(B,d)\\[2ex]
b \otimes \alpha_1 \wedge \dots \wedge \alpha_k & \mapsto b
\chi(\alpha_1) \dots \chi(\alpha_k).
\end{aligned}
\end{equation*}
is an isomorphism of CDGAs.
\end{theorem}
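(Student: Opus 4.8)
The plan is to give a direct proof: first that $f$ is a morphism of CDGAs, and then that it is a bijection. Although the present statement is, tautologically, the specialization of Theorem~\ref{chevalley} to the case $\lie = 0$ (so that $B_\lie = B$ and $B_{i,\lie} = B_i$), I would prove it from scratch, since it is meant to serve as the base of the proof of Theorem~\ref{chevalley} and so cannot rely on it.

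First I would check that $f$ is a well-defined homomorphism of graded algebras. Since $\la^*$ is placed in degree $1$, each $\chi(\alpha)$ is an odd element of $B$, whence $\chi(\alpha)\chi(\beta) = -\chi(\beta)\chi(\alpha)$; together with the linearity of $\chi$ this makes $\alpha_1\wedge\dots\wedge\alpha_k \mapsto \chi(\alpha_1)\cdots\chi(\alpha_k)$ alternating and multilinear, so it descends to $\bw\la^*$ and $f$ is well defined and unital. Multiplicativity then amounts to moving the factor $b'$ in $f(b\otimes\alpha)\,f(b'\otimes\beta)$ to the left past $\chi(\alpha_1)\cdots\chi(\alpha_k)$, which produces exactly the Koszul sign $(-1)^{(\deg\alpha)(\deg b')}$ that appears in the product rule of the Hirsch extension. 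To see that $f$ is a chain map it suffices, as $f$ is an algebra map and both $d$ and $d_{\bar\chi}$ are derivations, to compare $f d_{\bar\chi}$ with $d f$ on the generators $b\otimes 1$ and $1\otimes\alpha$: on the former both give $db$, while on the latter $d_{\bar\chi}(1\otimes\alpha) = \bar\chi(\alpha)\otimes 1$ with $\bar\chi = d\circ\chi$, so $f$ sends it to $d\chi(\alpha) = d\bigl(f(1\otimes\alpha)\bigr)$.

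The substantive part is bijectivity, which I would obtain by proving the unique decomposition $B = \bigoplus_I B_i\,\chi_I$; this is precisely the statement that $f$ is an isomorphism of graded vector spaces. Here I fix a basis $e_1,\dots,e_n$ of $\la$ with dual basis $e^1,\dots,e^n$, write $\chi_j = \chi(e^j)$ and $\chi_I = \chi_{i_1}\cdots\chi_{i_k}$ for $I = \{i_1<\dots<i_k\}$. The engine is the single-variable operator identity
\[
\chi_j\, i_{e_j} + i_{e_j}\,\chi_j = \id,
\]
valid on all of $B$, where $\chi_j$ denotes left multiplication: expanding $i_{e_j}(\chi_j c)$ by the Leibniz rule and using $i_{e_j}\chi_j = e^j(e_j) = 1$ yields the identity, and because $\chi_j$ is odd (so $\chi_j^2 = 0$) and $i_{e_j}^2 = 0$ one checks that $\chi_j i_{e_j}$ and $i_{e_j}\chi_j$ are complementary idempotents with images in $\chi_j B$ and in $\ker i_{e_j}$ respectively. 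This gives, for $n = 1$, the unique splitting
\[
c = i_{e_1}(\chi_1 c) + \chi_1\, i_{e_1}(c),
\]
with both coefficients in $B_i = \ker i_{e_1}$.

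For general $n$ I would induct on $\dim\la$. Setting $\la' = \langle e_1,\dots,e_{n-1}\rangle$ and $B_{i'} = \bigcap_{j<n}\ker i_{e_j}$, Lemma~\ref{anticommuting} gives $i_{e_j}i_{e_n} = -i_{e_n}i_{e_j}$, so $i_{e_n}$ preserves the subalgebra $B_{i'}$; moreover $\chi_n \in B_{i'}$ since $i_{e_j}\chi_n = e^n(e_j) = 0$ for $j<n$. The induction hypothesis applied to $\la'$ writes $B$ uniquely as $\bigoplus_{I\subseteq\{1,\dots,n-1\}} B_{i'}\,\chi_I$, and the $n=1$ case applied inside $B_{i'}$ with the contraction $i_{e_n}$ and the element $\chi_n$ (noting $(B_{i'}) \cap \ker i_{e_n} = B_i$) splits each coefficient; reordering the resulting monomials $\chi_n\,\chi_I$ into the standard form $\chi_{I\cup\{n\}}$ with the appropriate signs assembles the full unique decomposition $B = \bigoplus_J B_i\,\chi_J$. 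I expect the main obstacle to be exactly this sign bookkeeping, both in the multiplicativity of $f$ and in the reordering step of the induction, whereas the conceptual core, namely the idempotent identity together with the anticommutation of the contractions, is short.
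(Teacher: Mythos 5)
Your proposal is correct, and its engine is the same as the paper's: your identity $i_{e_j}(\chi_j c) = c - \chi_j\, i_{e_j} c$ with its complementary idempotents is precisely the paper's Proposition~\ref{chevalley_base} (take $D = i_{e_j}$ and $\eta = \chi_j$; the paper writes $D(\eta b) = b - \eta\, Db$), and your induction on $\dim\la$ runs along the same filtration that the paper denotes $B^{(k)} = \{\, b \mid i_{e_1}b = \dots = i_{e_k}b = 0 \,\}$, with Lemma~\ref{anticommuting} invoked in exactly the same way to see that $i_{e_n}$ and $\chi_n$ live on your $B_{i'}$. The difference is in the packaging, and it is a genuine trade-off. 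The paper applies the one-variable proposition to each $B^{(k)}$ to get a CDGA isomorphism $f_k \colon B^{(k+1)} \otimes \bw\left\langle y_{k+1}\right\rangle \to B^{(k)}$, promotes it to an isomorphism $\tilde f_k$ of the full Hirsch extensions by the $V$-CDGA tensoring of Remark~\ref{iso-iso}, and factors $f = \tilde f_0 \circ \dots \circ \tilde f_{n-1}$; in this way multiplicativity of $f$, the chain-map property, and all signs are inherited for free from the one-variable case, at the price of invoking the $V$-CDGA formalism. You instead verify that $f$ is a morphism of CDGAs by hand and obtain bijectivity from the explicit unique decomposition $B = \bigoplus_I B_i\,\chi_I$; this is more elementary and self-contained, at the price of the sign and reordering bookkeeping you flag --- bookkeeping which is in fact harmless, since each term $\chi_n\, b\,\chi_I$ lies in the subspace $B_i\,\chi_{I\cup\{n\}}$ irrespective of signs, so the directness of the sum is unaffected. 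Both routes are complete; the only point worth making explicit in yours is the one-line observation that $\bar\chi(\la^*)\subset B_i$ (as $i_a\, d\chi(\alpha) = \lie_a\chi(\alpha) - d(\alpha(a)) = 0$ when $\lie_a=0$), which is what makes $d_{\bar\chi}(1\otimes\alpha)=\bar\chi(\alpha)\otimes 1$ a legitimate element of $B_i\otimes\bw\la^*$; the paper records this before stating Theorem~\ref{chevalley}, so it is part of the standing setup rather than of the proof.
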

Now we show that Theorem~\ref{chevalley} is a corollary of
Theorem~\ref{chevalleybis}.
\begin{proof}[Proof of Theorem~\ref{chevalley} using Theorem~\ref{chevalleybis}]
Take $B=A_{\lie}$. Then the operation $i\colon \la\to \Der_{-1}(A)$ induces an
operation on $B$. To see this, we have only to check that for every $a\in \la$
and every $b\in B$, one gets $i_a(b) \in B$. In other words, we have to show
that $\lie_{a'} i_a (b) =0$ for all $a$, $a'\in \la$ and $b\in B$. We have
\begin{equation*}\label{;lkj;}
\begin{aligned}
\lie_{a'} i_a b = [\lie_{a'}, i_a] b + i_a \lie_{a'} b = i_{[a',a]} b  + 0 = 0
\end{aligned}
\end{equation*}
where we used first that $\lie_{a'}b =0$ as $b\in B = A_{\lie}$ and then that
$[a',a]=0$ as $\la$ is commutative.

Let us denote the resulting operation on $B$ by $i'$.

Now, by \eqref{inv}, we have that the connection $\chi$ can be
corestricted on $B=A_{\lie}$. Let us denote the resulting map $\la^*\to B$ by
$\chi'$. Then it is straightforward that $\chi'$ is an algebraic connection for
$i'$. Thus we can apply Theorem~\ref{chevalleybis} to $B$, $i'$, and $\chi'$.
But now we recovered the map from the claim of Theorem~\ref{chevalley}. This
shows that Theorem~\ref{chevalley} is a consequence of
Theorem~\ref{chevalleybis}.
\end{proof}
In order to prove Theorem~\ref{chevalleybis}, we need the following result.
\begin{proposition}
\label{chevalley_base}
Let $(B,d)$ be a CDGA, $D\in \Der_{-1}(B)$ such that $[D,d]=0$, $D^2=0$, and $\eta \in
B_1$ such that $ D \eta = 1$. Denote by $B_D\subset B$ the kernel of $D$.
Then the map
\begin{equation*}
\begin{aligned}
f  \colon \left(B_{D}\otimes
\mbox{$\bw $}\, \left\langle y \right\rangle,\, dy = d\eta \right) &\to
(B,d)\\[2ex]
a \otimes 1 + b \otimes y & \mapsto a+ b\eta
\end{aligned}
\end{equation*}
is an isomorphism of CDGAs.
\end{proposition}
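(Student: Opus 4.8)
The plan is to establish the three ingredients that together make $f$ an isomorphism of CDGAs: that $f$ is a morphism of graded algebras, that it intertwines the differentials, and that it is bijective. First I would record the sanity checks that make the source a legitimate Hirsch extension. Since $D$ is a derivation with $[D,d]=0$ and $D\eta=1$, one gets $D(d\eta)=-d(D\eta)=-d(1)=0$, so $d\eta\in B_D$; and $d$ restricts to $B_D$ because $D(da)=-d(Da)=0$ for $a\in B_D$. Thus $(B_D,d)$ is a sub-CDGA and $d\eta$ is a closed element of degree $2$ in it, so the extension $\left(B_D\otimes\bw\left\langle y\right\rangle, dy=d\eta\right)$ with $y$ in degree $1$ is well defined.

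Next I would verify multiplicativity and compatibility with the differentials, both of which are routine sign bookkeeping. Because $y^2=0$ and $\eta$ is odd (so $\eta^2=0$), the only point in $f\bigl((a_1\otimes 1+b_1\otimes y)(a_2\otimes 1+b_2\otimes y)\bigr)=f(a_1\otimes 1+b_1\otimes y)\,f(a_2\otimes 1+b_2\otimes y)$ is to match the cross terms: commuting $\eta$ past $a_2$ produces the Koszul sign $(-1)^{|a_2|}$, which is exactly the sign appearing in the product of the source. Compatibility with differentials is the analogous computation: on the source $d(b\otimes y)=db\otimes y+(-1)^{|b|}(b\,d\eta)\otimes 1$, so applying $f$ gives $da+(db)\eta+(-1)^{|b|}b\,d\eta$, which is precisely $d(a+b\eta)$ by the Leibniz rule in $B$. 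Both use only $f(y)=\eta$ together with the prescribed $dy=d\eta$.

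The hard part will be bijectivity, and this is where the hypotheses $D\eta=1$ and $D^2=0$ are essential; the idea is to exhibit the splitting $B=B_D\oplus\eta B_D$ explicitly. Consider $P:=\id-\eta\,D$, that is $P(x)=x-\eta\,Dx$. Using the Leibniz rule, $D(\eta\,Dx)=(D\eta)Dx-\eta\,D^2x=Dx$, so $D(P(x))=Dx-Dx=0$ and $P$ lands in $B_D$; moreover $Dx\in B_D$ since $D^2=0$. Hence every $x$ is $x=P(x)+\eta\,Dx$ with both pieces built from elements of $B_D$, which after commuting $\eta$ past $Dx$ to reach the form $b\eta$ proves surjectivity. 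For injectivity, suppose $a+b\eta=0$ with $a,b\in B_D$; applying $D$ and using $Da=Db=0$ and $D\eta=1$ gives $D(a+b\eta)=(-1)^{|b|}b=0$, whence $b=0$ and then $a=0$. This shows $f$ is a linear isomorphism, and combined with the previous two steps completes the proof.
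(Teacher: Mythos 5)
Your proof is correct and follows essentially the same route as the paper: your splitting $x = P(x) + \eta\,Dx$ with $P(x) = x - \eta\,Dx$ is literally the paper's decomposition $x = D(\eta x) + \eta\,Dx$ (since $D(\eta x) = x - \eta\,Dx$ by Leibniz), and your injectivity argument (apply $D$ to $a+b\eta=0$ and use $Da=Db=0$, $D\eta=1$) is identical. The only difference is that you spell out the multiplicativity, differential compatibility, and well-definedness checks that the paper dismisses as a straightforward computation, which is fine.
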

\begin{proof}
Let $b\in B_k$. Then
\begin{equation*}
D(\eta b) = b - \eta Db.
\end{equation*}
Thus every element $b\in B_k$ can be written as $b= D(\eta b) + \eta Db$. Note
that $D(\eta b) \in B_D$  and $Db \in B_D$, as $D^2=0$.
Hence
\[
b=f(D(\eta b)\otimes 1 + (-1)^{k-1}Db\otimes y).
\]
This shows that the map $f$ is surjective.

Now we show that $f$ is injective. Suppose $b_1 \in B_{D,k}$ and $b_{2} \in
B_{D,k-1}$ are such that $f(b_1\otimes 1 + b_2\otimes y) = 0$. Then $b_1 + b_2\eta
= 0$.  Applying $D$ and taking into account that $Db_1=Db_2=0$ and $D\eta =1$,
we get that $b_2=0$. But then also $b_1=0$. This proves that $\ker(f) = \left\{
0 \right\}$.

That $f$ is a homomorphism of CDGAs follows from a straightforward
computation.
\end{proof}
\begin{proof}[Proof of Theorem~\ref{chevalleybis}]
Let us choose a basis $a_1$, \dots, $a_n$ of $\la$ and denote by $\alpha_1$,
\dots, $\alpha_n$ the dual basis of $\la^*$. We will assume that $n\ge 2$.
Let
\[
B^{(k)}=\{b\in B \; | \; i_{a_1}b= \dots =i_{a_k}b=0 \}, \, 1\leq k\leq n.
\]
By abuse of notation, $B^{(0)}= B$. As all $i_{a_j}$ are derivations, we get that $B^{(k)}$
is a subalgebra of $B$. Moreover, since $i_{a_j}$ commute with the differential
of $B$, the algebras $B^{(k)}$ are endowed with the induced CDGA-structure.

Let us fix $0\le k\le n-1$. Define the derivation $D$ on $B^{(k)}$ by
\begin{equation*}
D b = i_{a_{k+1}}b.
\end{equation*}
To verify that $D$ is well defined we have to show that $Db \in B^{(k)}$. Let
$j\le k$. Then by Lemma~\ref{anticommuting}
\begin{equation*}
i_{a_j} D b =i_{a_j} i_{a_{k+1}} b =- i_{a_{k+1}} i_{a_{j}}b =0.
\end{equation*}
From the axioms of  operation it follows that $D^2=0$. Moreover, $[D,d]=0$ as
$\lie_{a_{k+1}}= 0$. Denote  $\chi(\alpha_{k+1})$ by $\eta$. From
the definition of an algebraic connection it follows that $D\eta =
i_{a_{k+1}} \chi(\alpha_{k+1}) = \alpha_{k+1}(a_{k+1}) =1 $
and for $j\le k$
\begin{equation*}
i_{a_j} \eta = i_{a_j} \chi(\alpha_{k+1}) = \alpha_{k+1} (a_j) =0.
\end{equation*}
Hence $\eta \in B^{(k)}$. Thus we can apply
Proposition~\ref{chevalley_base} to $B^{(k)}$ with the above defined $D$ and
$\eta$.
Note that $B^{(k)}_{i(a_{k+1})}$ coincides with $B^{(k+1)}$.
We get the isomorphism of CDGAs
\begin{equation*}
\begin{aligned}
f_k \colon (B^{(k+1)} \otimes  \mbox{$\bw $}\,\left\langle y_{k+1}
\right\rangle, dy_{k+1} = d\chi(\alpha_{k+1})) & \to  ( B^{(k)},d )\\[2ex]
b \otimes 1 & \mapsto b\\
b \otimes y_{k+1} & \mapsto b \chi(\alpha_{k+1}).
\end{aligned}
\end{equation*}
Note, that for every $j\le k$, we have $d \chi(\alpha_j) \in B^{(k+1)} \subset
B^{(k)}$.  To see this, we have to verify that $i_{a_s} d\chi(\alpha_j) =0$ for
all $s\le k+1$. But $i_{a_s} (\chi(\alpha_j)) = \alpha_j(a_s) =\delta_{js}$, since
$\chi$ is an algebraic connection for the operation $i$. Thus, as $[i_a,d]=0$
for all $a\in \la$, we get
\begin{equation*}
i_{a_s} d \chi(\alpha_j) = - d i_{a_s} \chi(\alpha_j) = -d\delta_{js}=0.
\end{equation*}
Hence the maps
\begin{equation*}
\begin{aligned}
h_k \colon \left\langle y_{k}, \dots, y_1 \right\rangle &\to B^{(k+1)} \otimes
\bw \left\langle y_{k+1} \right\rangle \\
y_j &\mapsto d\chi(\alpha_j) \otimes 1\\
h'_k \colon \left\langle y_{k}, \dots, y_1 \right\rangle &\to B^{(k)}
 \\
y_j &\mapsto d\chi(\alpha_j)
\end{aligned}
\end{equation*}
are well defined. Moreover, $dh_k=0$, $dh'_k=0$ and $f_k\circ h_k =h'_k$.
Hence $f_k$ is a homomorphism of $V$-CDGAs for $V=\left\langle y_{k}, \dots, y_1 \right\rangle$.
By Remark~\ref{iso-iso}, we get the isomorphism $\tilde{f}_k$  of CDGAs
from
\begin{equation*}
 \left(B^{(k+1)} \otimes  \bw \,\left\langle y_{k+1},\dots, y_1
\right\rangle, dy_{k+1} = d\chi(\alpha_{k+1}), \dots, dy_1 = d\chi(\alpha_1)\right)
\end{equation*}
to
\begin{equation*}
  \left( B^{(k)}\otimes \bw \left\langle y_{k},\dots, y_1
\right\rangle,dy_{k} = d\chi(\alpha_{k}),\dots, dy_1 = d\chi(\alpha_1)\right) 
\end{equation*}
defined by
\begin{equation*}
\tilde{f}_k (b\otimes 1) =b\otimes 1,\quad \tilde{f}_k (1\otimes y_{k+1}) =
\chi(\alpha_{k+1}) \otimes 1,\quad  \tilde{f}_k (1\otimes y_j) = 1 \otimes y_j ,\quad 
j\leq k.
\end{equation*}
It is not difficult to check that
$f$  equals to $\tilde{f}_0\circ \tilde{f}_1 \circ \dots \circ \tilde{f}_{n-1}$.
Thus $f$ is a quasi-isomorphism of CDGAs. Then the claim follows upon the identification of
$\left\langle y_{n},\dots, y_1 \right\rangle$ with $\la^*=\left\langle \alpha_{n},\dots, \alpha_1 \right\rangle$.
\end{proof}
%
\section{Models of quasi-Sasakian manifolds}\label{quasi-Sasakian}
First of all, we will recall the definition of a quasi-Sasakian structure as a
particular class of an almost contact metric structure (for more details, see
\cite{blair1,blair2}).

An almost contact metric structure on a manifold $M$ of dimension $2n+1$ is
given by  an endomorphism  $\varphi$ of $TM$, a vector field $\xi$, a $1$-form $\eta$ and a Riemannian metric $h$ satisfying the following conditions
\begin{equation*}
\varphi^2 = -\id + \eta \otimes \xi,\quad  \eta(\xi) = 1,\quad
h \circ \left( \varphi\otimes \varphi \right) = h - \eta \otimes \eta.
\end{equation*}
 A manifold $M$ endowed with an almost contact metric structure is said to be an almost contact metric manifold. The vector field $\xi$ is called the Reeb vector field of $M$. Note that
\[
\varphi(\xi) = 0, \; \; \eta(X) = h(X, \xi),
\]
for $X \in {\mathfrak X}(M)$. In particular, we can consider the free action $f: \mathbb{R} \to {\mathfrak X}(M)$ of the abelian Lie algebra $\mathbb{R}$ on $M$ given by
\[
f(a) = a \xi, \; \; \; \mbox{ for } a\in \mathbb{R}.
\]
For an almost contact metric structure $(\varphi, \xi, \eta, h)$ on $M$, the fundamental $2$-form $\Phi$ is defined by
\[
\Phi(X, Y) = h(X, \varphi Y), \; \; \mbox{ for } X, Y \in {\mathfrak X}(M).
\]
The almost contact metric structure $(\varphi, \xi, \eta, h)$ is said to be
\begin{itemize}
\renewcommand\labelitemi{--}
 \item \emph{normal} if $N_{\varphi} +  d\eta \otimes \xi = 0$, where $N_{\varphi}$ is the
Nijenhuis torsion of $\varphi$;\item \emph{co-K\"ahler} if it is normal, $d\eta = 0$ and $d
\Phi =0$;\item  \emph{Sasakian} if it is normal and $d\eta = \Phi$;\item
\emph{quasi-Sasakian} if it
is normal and $d\Phi = 0$.
\end{itemize}
A standard example of a quasi-Sasakian manifold is the nilpotent Lie group
\[
G = \heis (1, l) \times \mathbb{R}^{2(n-l)},
\]
where $\heis(1, l)$ is the generalized Heisenberg group of dimension $2l + 1$. We remind to the reader that the Heisenberg group $H(1,l)$ is the Lie subgroup of
dimension $2l+1$ in the general linear group $\mathrm{GL}_{l+2}(\R)$ with
elements of the form
\begin{equation*}
\left(
\begin{array}{ccc}
1 & P & t \\
0 & I_l & Q \\
0 & 0 & 1
\end{array}
 \right),
\end{equation*}
where $I_l$ denotes the $l\times l$ identity matrix.
We can take a basis of {left}-invariant $1$-forms $\{\alpha_1, \dots, \alpha_{2l+1}, \beta_1, \dots, \beta_{2(n-l)}\}$ on $G$ given by
\begin{equation*}\label{basis-left-forms}
\alpha_i = dp_i, \; \; \alpha_{l+i} = dq^{i}, \; \; \alpha_{2l+1} = dt - \sum_{i=1}^{l}p_i dq^{i}, \; \; \beta_k =  dx_k,
\end{equation*}
for $i \in \{1, \dots, l\}$ and $k \in \{1, \dots, 2(n-l)\}$.
It is clear that
\[
d \alpha_j = 0, \; \; d\alpha_{2l+1} = -\sum_{j =1}^{l} \alpha_j \wedge \alpha_{l+j} \mbox{ and } d\beta_k = 0,
\]
with $j \in \{1, \dots, 2l\}$ and $k \in \{1, \dots, 2(n-l)\}$. Then, if we denote by
\[
\{X_1, \dots, X_{2l+1}, Y_1, \dots, Y_{2(n-l)} \}
\]
the dual basis of vector fields, we have that
\begin{equation}\label{basis-left-vector}
X_{i} = \displaystyle \frac{\partial}{\partial p_i}, \; \; X_{l+i} = \displaystyle \frac{\partial}{\partial q^i} + p_i \frac{\partial}{\partial t}, \; \; X_{2l +1} = \displaystyle \frac{\partial}{\partial t}, \; \; Y_k = \displaystyle \frac{\partial}{\partial x_k},
\end{equation}
for $i \in \{1, \dots, l\}$ and $k \in \{1, \dots, 2(n-l)\}$.
Now, we can define the left-invariant quasi-Sasakian structure $(\varphi, \xi, \eta, h)$ on $G$ given by
\begin{equation*}\label{varphi-quasi}
\varphi = \sum_{i=1}^{l} (\alpha_i \otimes {X_{l+i}} - \alpha_{l+i} \otimes X_{i}) + \sum_{j=1}^{n-l}(\beta_j \otimes Y_{{n-l}+j} - \beta_{{n-l}+j} \otimes Y_j)
\end{equation*}
and
\begin{equation*}\label{xi-eta-quasi}
\xi = X_{2{l}+1}, \; \; \eta = \alpha_{2{l}+1} \; \; \mbox{ and } \; \; h = \sum_{i=1}^{2{l}+1} \alpha_i \otimes \alpha_i + \sum_{k=1}^{2{(n-l)}} \beta_k \otimes \beta_k,
\end{equation*}
with fundamental $2$-form
\begin{equation*}
\Phi = \sum_{i=1}^{l} \alpha_{l+i} \wedge \alpha_i - \sum_{j=1}^{n-l} \beta_j \wedge \beta_{{n-l}+j}.
\end{equation*}
So, if $\Gamma$ is a cocompact discrete subgroup of $G$, then $(\varphi, \xi,
\eta, h)$ induces a quasi-Sasakian structure on the compact nilmanifold $\Gamma \backslash G$.\label{S} In other words, $\Gamma \backslash G$ is a compact quasi-Sasakian nilmanifold.
Note that $G$ is a nilpotent Lie group and the structure constants of its Lie
algebra with respect to the previous basis are rational numbers. Therefore,
$G$ admits a cocompact discrete subgroup (see~\cite{malcev}).

\begin{remark} If ${n = l}$ in the previous example, then the
quasi-Sasakian structure on   $\Gamma \backslash G$  is Sasakian and if $ {l} =0$ then it is
co-K\"ahler. However, if $ {n \neq l}$ and $ {l \neq 0}$ then the compact
nilmanifold  {$\Gamma \backslash G$} does not admit either a Sasakian or a co-K\"ahler
structure. In
fact, a compact co-K\"ahler nilmanifold is diffeomorphic to a torus and a
compact Sasakian nilmanifold is diffeomorphic to a compact quotient of a
Heisenberg group of odd dimension with a cocompact discrete subgroup (see
\cite{nilsasakian}). So, we can conclude that the class of the compact
quasi-Sasakian manifolds is actually distinct from the classes of compact
co-K\"ahler and compact Sasakian manifolds.
\end{remark}

%
Now, we will show that on a quasi-Sasakian manifold the foliation of rank $1$ generated by the Reeb vector field is transversely K\"ahler.
K\"ahler manifolds are defined as a special case of Hermitian manifolds.

An almost Hermitian structure on a manifold $M$ of even dimension $2(n+1)$ is a couple $(J, g)$, where $J$ is a $(1, 1)$ tensor field on $M$, $g$ is a Riemannian metric and
\begin{equation*}
J^2 = -\id, \; \; g(JX, JY) = g(X, Y),
\end{equation*}
for $X, Y \in {\mathfrak X}(M)$.

The fundamental $2$-form of $M$ is defined by
\[
\Omega (X, Y) = g(X, JY), \; \; \mbox{ for } X, Y \in {\mathfrak X}(M)
\]
A manifold $M$ endowed with an almost Hermitian structure is said to be an
\emph{almost Hermitian manifold}.
The almost Hermitian manifold $(M, J, g)$ is said to be:
\begin{itemize}
\renewcommand\labelitemi{--}
\item Hermitian if $N_J = 0$, where $N_J$ is the Nijenhuis torsion of $J$;\item K\"ahler
if it is Hermitian and $d\Omega = 0$.
\end{itemize}
\begin{definition}
\label{leeoneform}
The $1$-form $\theta:= \frac1{n}\delta \Omega \circ J$ on a Hermitian
manifold $\left( M,J,g \right)$ is called the \emph{Lee $1$-form}, where $\delta$ is the codifferential.
\end{definition}

Let $\fol$ be a foliation  on a manifold $M$ of codimension $q$. Denote by $\nu \fol$
the vector bundle $TM/T\fol$ on $M$. Given a foliated chart $U\subset M$  for $\fol$, we
have the quotient map $f_U \colon U \to \R^q$.
Note, that $f_U$ induces an isomorphism $f_{U,p} \colon (\nu \fol)_p \to T_{f_U(p)} \R^q$ for
every point $p\in U$.

If $U$ and $V$ are two foliated charts with a non-empty intersection then there is a
smooth function $\tau_{UV} \colon f_U(U\cap V) \to f_V(U\cap V)$ such that $f_V =
\tau_{UV}\circ f_U$ on $U\cap V$.

The foliation $\fol$ is called \emph{transversely K\"ahler}, if for every foliated chart
$U$ there is given a K\"ahler structure on $f(U)$ so that every transition
function $\tau_{UV}$ preserves the K\"ahler structure.  

An endomorphism $J$ of the distribution associated to $\fol$ such that $\left[ J,J \right]_{FN} =0$,
$J^2=-\id$, and $\lie_X J=0$ for all $X\in \Gamma \fol$ is called \emph{foliated complex
structure} on $\fol$.
\begin{proposition}
Let $J$ be a foliated complex structure on $\fol$ and $g$ an $\fol$-invariant metric on
$\fol$. Define $\Omega (X,Y) = g(X,JY)$ for $X$, $Y\in \Gamma(\nu\fol)$.
If $d\Omega =0$, then $\fol$ is transversely K\"ahler.
\end{proposition}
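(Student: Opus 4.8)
The plan is to reduce the transverse K\"ahler property to an ordinary K\"ahler condition on each local quotient, by pushing the foliated data $J$, $g$, and $\Omega$ through the submersions $f_U$ and exploiting their holonomy-invariance. Fix a foliated chart $U$ with quotient map $f_U\colon U\to f_U(U)\subset\R^q$, and recall that $f_{U,p}\colon(\nu\fol)_p\to T_{f_U(p)}\R^q$ is an isomorphism for every $p\in U$. Using these isomorphisms I transport $J$, $g$, and $\Omega$ fibrewise to $f_U(U)$. The key point is that this transport is independent of the point $p$ chosen inside a given plaque: since $J$, $g$, $\Omega$ are foliated (so that $\lie_X J=0$, $\lie_X g=0$, and hence $\lie_X\Omega=0$ for every $X\in\Gamma\fol$) and each plaque is a connected level set of $f_U$, integrating these vanishing Lie derivatives along leaf-tangent flows shows that the transported values agree along plaques. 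Hence $J$, $g$, $\Omega$ descend to well-defined tensor fields $J_U$, $g_U$, $\Omega_U$ on $f_U(U)$.

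Next I would check that $(f_U(U),J_U,g_U)$ is K\"ahler. From $J^2=-\id$ we obtain $J_U^2=-\id$, and the compatibility $g(JX,JY)=g(X,Y)$ (equivalently, the antisymmetry of $\Omega$) descends to $g_U(J_U\,\cdot,J_U\,\cdot)=g_U(\cdot,\cdot)$, so $(J_U,g_U)$ is almost Hermitian with fundamental form $\Omega_U=g_U(\cdot,J_U\,\cdot)$. The condition $\left[J,J\right]_{FN}=0$ is exactly the vanishing of the transverse Nijenhuis torsion, which descends to the vanishing of the Nijenhuis tensor of $J_U$; by the Newlander--Nirenberg theorem $J_U$ is then an integrable complex structure, so $f_U(U)$ becomes a Hermitian manifold. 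Finally $\Omega$ is a basic form ($i_X\Omega=0$ and $\lie_X\Omega=0$ for $X\in\Gamma\fol$), so the hypothesis $d\Omega=0$ descends to $d\Omega_U=0$. A Hermitian manifold with closed fundamental form is K\"ahler, whence $(f_U(U),J_U,g_U)$ carries a K\"ahler structure.

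It then remains to check that the transition functions preserve these structures. If $U$, $V$ are overlapping foliated charts, then $f_V=\tau_{UV}\circ f_U$ on $U\cap V$; since $J$, $g$, $\Omega$ are \emph{globally defined} foliated tensors on $\nu\fol$, descending the \emph{same} tensor through $f_U$ and through $f_V$ forces $\tau_{UV}^*J_V=J_U$, $\tau_{UV}^*g_V=g_U$, and $\tau_{UV}^*\Omega_V=\Omega_U$. Thus each $\tau_{UV}$ is a biholomorphic isometry sending $\Omega_V$ to $\Omega_U$, i.e. it preserves the K\"ahler structure, and by the definition recalled above $\fol$ is transversely K\"ahler.

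The step I expect to be the main obstacle is the descent argument itself: making rigorous that a foliated (Lie-derivative-flat) tensor on $\nu\fol$ yields a genuinely well-defined tensor on the local quotient $f_U(U)$, and that $\left[J,J\right]_{FN}=0$ corresponds precisely to the Nijenhuis torsion of the descended $J_U$. Once these identifications are in place, the almost Hermitian compatibility, the closedness of $\Omega_U$, and the cocycle compatibility of the $\tau_{UV}$ are routine verifications.
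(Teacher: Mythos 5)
Your proof is correct and takes essentially the same approach as the paper's: transport $J$, $g$, $\Omega$ to the local quotients through the submersions $f_U$, use $\lie_X J=0$ and $\lie_X g=0$ to see the transported tensors are well defined, verify the K\"ahler condition on each $f_U(U)$, and observe that the transition maps $\tau_{UV}$ are holomorphic isometries because both charts descend the same global foliated tensors. The only difference is one of detail: you spell out (via Newlander--Nirenberg and the descent of the basic closed form $\Omega$) what the paper dismisses as ``standard computations.''
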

\begin{proof}
Let $U$ be a foliated chart and $f=f_U\colon U\to \R^q$ the corresponding projection.
Since $\lie_X J=0$ and $\lie_X g=0$ for all $X\in \Gamma(\fol) $, we have
a well-defined almost complex structure $J'$ on $f(U)$ and a well-defined
Riemannian metric $g'$ on $f(U)$ induced by $J$ and $g$, respectively. More
precisely, given
 a point $x\in f(U)$ choose an arbitrary  $p\in f^{-1}(x)$.
We have the isomorphism
$h:=f_{U,p}\colon (\nu\fol)_p \to T_x \R^q$
Then
\begin{equation}
\label{primes}
J' X = h J h^{-1}(X),\quad  g'(X,Y) = g(h^{-1} X,
h^{-1}Y)
\end{equation}
for any $X$, $Y\in T_x f(U)$.
Since $J$ is integrable and $d\Omega =0$,  standard computations show that
$(J',g')$ is a K\"ahler structure on $f(U)$.

Now, let $V$ be another foliated chart and denote by $(J'',g'')$ the
corresponding K\"ahler structure on $f(V)$.
Then using~\eqref{primes} and similar formulas for $J''$ and $g''$, it is easy to see that $\tau_{UV}$ is a holomorphic
isometry. This shows that $\fol$ is transversely K\"ahler.
\end{proof}
\begin{proposition}\label{q-s-trans-Kahler}
On a quasi-Sasakian manifold the foliation of rank $1$ generated by the Reeb vector field is transversely K\"ahler.
\end{proposition}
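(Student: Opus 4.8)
The plan is to reduce the statement to the preceding proposition by producing, on the normal bundle $\nu\fol$ of the Reeb foliation $\fol$, a foliated complex structure $J$, an $\fol$-invariant transverse metric $g$, and the corresponding transverse fundamental form $\Omega$, and then verifying $d\Omega = 0$. Throughout I will invoke the standard structural properties of a quasi-Sasakian manifold (see \cite{blair1,blair2}): the Reeb field $\xi$ is Killing for $h$, $\lie_\xi\varphi = 0$, and $i_\xi d\eta = 0$; since $i_\xi\eta = 1$, the last identity gives $\lie_\xi\eta = i_\xi d\eta = 0$, so $\xi$ preserves the contact distribution $\mathcal{D} = \ker\eta$.

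First I would build the transverse data, using the $h$-orthogonal splitting $TM = \langle\xi\rangle \oplus \mathcal{D}$ to identify $\nu\fol$ with $\mathcal{D} = \ker\eta$. Because $\varphi\xi = 0$ and $\eta\circ\varphi = 0$, the restriction $J := \varphi|_{\mathcal{D}}$ is an endomorphism of $\mathcal{D}$, and for $X \in \mathcal{D}$ the identity $\varphi^2 = -\id + \eta\otimes\xi$ gives $J^2 X = -X + \eta(X)\xi = -X$, so $J^2 = -\id$. Setting $g := h|_{\mathcal{D}}$ and $\Omega := \Phi|_{\mathcal{D}}$, the relation $\Omega(X,Y) = g(X,JY)$ is just $\Phi(X,Y) = h(X,\varphi Y)$ restricted to $\mathcal{D}$. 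Since $\xi$ is Killing we have $\lie_\xi g = 0$, and $\lie_\xi\varphi = 0$ yields $\lie_\xi J = 0$; combined with $\lie_\xi\eta = 0$, which ensures $\xi$ preserves $\mathcal{D}$, this shows that $J$ and $g$ descend to well-defined $\fol$-invariant objects on $\nu\fol$.

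The heart of the argument, and the step I expect to be the main obstacle, is the transverse integrability $[J,J]_{FN} = 0$. I would deduce it from the normality condition written in the sharp form $N_\varphi(X,Y) = -\,d\eta(X,Y)\,\xi$. The key observation is that $\lie_\xi\varphi = 0$, together with $\varphi\xi = 0$, makes $\varphi$ send foliated vector fields to foliated vector fields, so that, projecting the defining formula of the Nijenhuis torsion to $\nu\fol$ and using that transverse brackets are the projections of ambient brackets of projectable fields, one obtains $N_J = \overline{N_\varphi}$, the image of $N_\varphi$ in $\nu\fol$. But $N_\varphi$ takes values in $T\fol = \langle\xi\rangle$, hence its projection vanishes and $N_J = 0$, equivalently $[J,J]_{FN} = 0$. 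The only delicate point is this bookkeeping with projectable fields, which is precisely where the invariance $\lie_\xi\varphi = 0$ enters.

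Finally I would check $d\Omega = 0$. The form $\Phi$ is basic for $\fol$: indeed $i_\xi\Phi = 0$ since $\Phi(\xi,\cdot) = h(\xi,\varphi\,\cdot) = \eta(\varphi\,\cdot) = 0$, while $\lie_\xi\Phi = (\lie_\xi h)(\cdot,\varphi\,\cdot) + h(\cdot,(\lie_\xi\varphi)\,\cdot) = 0$ because $\xi$ is Killing and $\lie_\xi\varphi = 0$. Thus $\Phi$ is the pullback of the transverse form $\Omega$, and the quasi-Sasakian hypothesis $d\Phi = 0$ translates into $\Omega$ being transversely closed. With $(J,g,\Omega)$ satisfying all the requirements, the preceding proposition applies and shows that $\fol$ is transversely K\"ahler.
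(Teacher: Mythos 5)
Your proof is correct and follows essentially the same route as the paper's: both induce the transverse structure $(J,g)$ from $(\varphi,h)$ using that $\xi$ is Killing and $\lie_\xi\varphi=0$, both kill the transverse Nijenhuis torsion via the normality identity $N_\varphi+d\eta\otimes\xi=0$, and both observe that $\Phi$ is basic and closed before invoking the preceding proposition. The only difference is that you spell out the bookkeeping (the identification $\nu\fol\cong\ker\eta$, projectability of $\varphi$-images of foliated fields, and the verification that $\Phi$ is basic) which the paper's proof leaves implicit.
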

\begin{proof}
If $(\varphi, \xi, \eta, h)$ is the almost contact metric structure on the quasi-Sasakian manifold $M$ then
${\mathcal L}_{\xi}\varphi = 0$
(see, for instance, Theorem 6.1 in \cite{blair2}). Thus, using that $\xi$ is a Killing vector field, we have that the couple $(\varphi, h)$ induces a transverse K\"ahler structure $(J, g)$ with respect to the foliation of rank $1$ generated by $\xi$. In fact, since $N_{\varphi} +  d \eta \otimes \xi = 0$, it follows that the transverse Nijenhuis torsion of $J$ is zero. In addition, the transverse fundamental $2$-form of $(J, g)$ is just the fundamental $2$-form $\Phi$ of $M$ which is basic and closed.
\end{proof}
Now we are ready to describe a model for quasi-Sasakian manifolds that
generalizes the Tievsky model~\cite{tievsky} for the Sasakian manifolds.
\begin{theorem}\label{model_qS}
Let $(M^{2n+1},  \varphi, \xi, \eta, g)$ be a compact quasi-Sasakian manifold.
Then the CDGA
\begin{equation}
\label{eq:model_qS}
\left(H^*_B(M,\xi) \otimes \bw \left\langle y \right\rangle, dy =
[d\eta]_B\right)
\end{equation}
is quasi-isomorphic to $\Omega^*(M)$. In
other words the CDGA~\eqref{eq:model_qS}
is a model of~$M$.
\end{theorem}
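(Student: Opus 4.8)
The plan is to realize $\Omega^*(M)$ as a formal replacement of the invariant complex of the $\R$-action generated by the Reeb field, by combining the invariance theorem, the abelian Chevalley model, and the transverse K\"ahler structure.

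First I would consider the free action $f\colon \R\to\vf(M)$, $f(a)=a\xi$, singled out in the discussion of almost contact metric structures. On a quasi-Sasakian manifold $\xi$ is a Killing vector field, so Theorem~\ref{invariant} applies and shows that the inclusion $\Omega^*_{\lie_f}(M)\hookrightarrow\Omega^*(M)$ is a quasi-isomorphism; thus it suffices to understand the invariant complex $\Omega^*_{\lie_f}(M)$. Since $g(\xi,\xi)=\eta(\xi)=1$ is constant, Corollary~\ref{alg-conn-orthonormal} supplies an explicit algebraic connection for the operation $i_f$, namely $\chi(e^1)=g(\xi,-)=\eta$, where $e^1$ generates $\R^*$. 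Consequently $\bar\chi=d\circ\chi$ sends the generator to $d\eta$.

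Next I would invoke the abelian Chevalley model, Theorem~\ref{chevalley}, for $\la=\R$. It identifies $(\Omega^*_{\lie_f}(M),d)$ with the Hirsch extension
\begin{equation*}
\left(\Omega^*_B(M,\xi)\otimes\bw\langle y\rangle,\ dy=d\eta\right),
\end{equation*}
where the basic complex $\Omega^*_B(M,\xi)=\Omega^*_{i_f,\lie_f}(M)$ plays the role of $A_{i,\lie}$ and $d\eta=\bar\chi$ is a closed basic $2$-form (one checks $i_\xi d\eta=0$ and $\lie_\xi d\eta=0$). Together with the previous step, this already shows that $\Omega^*(M)$ is quasi-isomorphic to this Hirsch extension of the basic complex. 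It then remains to replace $\Omega^*_B(M,\xi)$ by its cohomology. Here I would use that, by Proposition~\ref{q-s-trans-Kahler}, the rank-one foliation generated by $\xi$ is transversely K\"ahler; since $\xi$ is Killing the foliation is Riemannian, basic Hodge theory is available, and the basic complex satisfies the transverse $\partial\bar\partial$-lemma. The formal consequences of the $dd^c$-lemma then give that $\Omega^*_B(M,\xi)$ is a \emph{formal} CDGA, exactly as in Tievsky's treatment of the Sasakian case. Viewing $\Omega^*_B(M,\xi)$ as a $V$-CDGA for $V=\langle y\rangle$ concentrated in degree $1$ via $y\mapsto d\eta$, Corollary~\ref{substitution} now yields a quasi-isomorphism between the Hirsch extension above and $\left(H^*_B(M,\xi)\otimes\bw\langle y\rangle,\ dy=[d\eta]_B\right)$. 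Chaining the three links produces the desired quasi-isomorphism with $\Omega^*(M)$.

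The main obstacle is this last step: establishing formality of the basic de Rham complex of the transversely K\"ahler foliation. Everything else is a direct application of the general CDGA machinery set up in Section~2, but the formality genuinely uses transverse Hodge theory for Riemannian foliations and the transverse K\"ahler identities, which is the only place where the geometry of $M$ enters in an essential way.
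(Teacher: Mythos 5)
Your proposal is correct and follows essentially the same route as the paper's proof: the invariance theorem for the Killing Reeb field (Theorem~\ref{invariant}), the explicit algebraic connection $\chi(t)=t\eta$ from Corollary~\ref{alg-conn-orthonormal}, the abelian Chevalley model (Theorem~\ref{chevalley}) giving the Hirsch extension of $\Omega^*_B(M,\xi)$, and finally formality of the basic complex combined with Corollary~\ref{substitution}. The only (inessential) difference is in how formality of $\Omega^*_B(M,\xi)$ is justified: you sketch the transverse $dd^c$-lemma argument directly, while the paper cites Wolak's theorem for formality of $\Omega^*_B(M,\xi)\otimes\C$ over $\C$ and then Sullivan's result that formality is preserved under field extensions to conclude formality over $\R$.
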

\begin{proof}
Since $\xi$ is a Killing vector field,
 it follows from Theorem~\ref{invariant}
that the inclusion $\Omega_{\lie_\xi}^*\to \Omega^*(M)$ is a quasi-isomorphism.
As $\xi$ is Killing and of constant length,
by Corollary~\ref{alg-conn-orthonormal}
the map $\chi \colon \R \to \Omega^1(M) $ given by $\chi(t) = t\eta$
is an algebraic connection for
the operation $i$ on $\Omega^*(M)$. Therefore by Theorem~\ref{chevalley} the
CDGA
\begin{equation*}
\left( \Omega^*_B(M,\xi) \otimes \bw \left\langle y \right\rangle, dy=d\eta
\right)
\end{equation*}
is quasi-isomorphic to $\Omega_{\lie_\xi}^*(M)$ and thus to $\Omega^*(M)$.

By Proposition ~\ref{q-s-trans-Kahler} the foliated manifold $(M,\xi)$ is transversely K\"ahler.
Now,  by \cite[Theorem 3]{wolak}  the CDGA
$\Omega^*_B(M,\xi)\otimes \C$
over $\C$ is formal. It is proved in \cite[Theorem~12.1]{sullivan} that the
property to be formal or non-formal is preserved under field extensions. Thus
$\Omega^*_B(M,\xi)$ is  a formal CDGA over $\R$. In other words, $\Omega^*_B(M,\xi)$ is
quasi-isomorphic to $(H^*_B(M,\xi),0)$. By Corollary~\ref{substitution}, we get
that
\begin{equation*}
\left(\Omega^*_B(M,\xi)\otimes \bw \left\langle y \right\rangle,
dy=d\eta\right)
\end{equation*}
and
\begin{equation*}
\left(H^*_B(M,\xi)\otimes \bw \left\langle y \right\rangle, dy
=[d\eta]_B\right)
\end{equation*}
are quasi-isomorphic. This proves the theorem.
\end{proof}
Motivated by the models described in Theorem~\ref{model_qS}, we introduce the
following class of CDGAs.
\begin{definition}\label{almostformal}
We say that a CDGA $(B,d)$ is \emph{almost formal} of  \emph{index}
$l$ if it is quasi-isomorphic to
the CDGA $\left(A\otimes \bw \left\langle y \right\rangle,dy=z\right)$, where $A$ is a
connected CDGA with the zero differential and $z\in A_2$ is a
closed homogeneous element satisfying  $z^l\not=0$, $z^{l+1}=0$.
\end{definition}

The previous definition and Theorem \ref{model_qS} suggest us to introduce the following notion for quasi-Sasakian manifolds.
\begin{definition}
Let $(M^{2n+1}, \varphi, \xi, \eta, h)$ be a quasi-Sasakian manifold. The
\emph{index} of $M$ is the natural number $l$, $0 \leq l \leq n$, satisfying
\[
[d \eta]^l_B \neq 0 \; \; \mbox{ and } \; \; [d \eta]^{l+1}_B = 0.
\]
\end{definition}
\begin{remark}\label{index-Sasakian}
If $M$ is a compact Sasakian (resp. co-K\"ahler) manifold of dimension $2n+1$
then, from Lemma 3.1 in \cite{nilsasakian}, we have that the index of $M$ is
maximal and equal to $n$ (resp., minimal and equal to $0$).
\end{remark}
Using Theorem \ref{model_qS}, we deduce that the model ~\eqref{eq:model_qS} of a
compact quasi-Sasakian manifold of index $l$ is an almost formal CDGA of the
same index.

\section{Models of quasi-Vaisman manifolds}
\label{quasi-Vaisman}
In this section, we will introduce a particular class of Hermitian structures as a natural extension of Vaisman structures.

Recall that a Hermitian manifold $(M, J, g)$ is said to be
\emph{locally conformal K\"ahler} (or LCK) if the fundamental 2-form $\Omega$
and the Lee  $1$-form $\theta$ satisfy the identities
\[
d \Omega = \Omega \wedge \theta,\quad d\theta=0.
\]
 The manifold is said to be a \emph{Vaisman manifold}  if, moreover, the Lee 1-form is parallel with respect to the Levi-Civita connection of $g$.
The \emph{anti-Lee $1$-form}  $\eta$ is defined as
$\eta = -\theta \circ J$ while the \emph{Lee} and \emph{anti-Lee vector fields} $U,V$ are defined as the metric duals of $\theta, \eta$, respectively.

%

Let $(M, J, g)$ be a Vaisman manifold with Lee and anti-Lee $1$-forms $\theta$ and $\eta$, respectively, and Lee and anti-Lee vector fields $U$ and $V$, respectively.
We will assume (without loss of generality) that the norm of $\theta$ is~$1$. In
these conditions, one can prove that
\[
d \Omega = d\eta \wedge \theta,
\]
and that, moreover, the Lee vector field $U$ is Killing and an infinitesimal
automorphism of the complex structure, that is,
\[
{\mathcal L}_U g = 0 \; \; \mbox{ and } \;  \; {\mathcal L}_U J = 0
\]
(see, for instance, Propositions 4.2 and 4.3 in \cite{DrOr}; see also \cite{vaisman_roma}).

Motivated by the previous results, we introduce the following definition.
\begin{definition}
A \emph{quasi-Vaisman structure} on a manifold $M$ is a triple $(J, g, \theta)$, with $(J, g)$ a Hermitian structure, $\theta$ a closed $1$-form and such that the metric dual $U$ of $\theta$ is unitary,
Killing and, in addition,
\[
\lie_U J =0,\quad d \Omega = d\eta \wedge \theta,
\]
where $\Omega$ is the fundamental $2$-form of $M$ and
\begin{equation}\label{eta-theta}
\eta = -\theta \circ J.
\end{equation}
\end{definition}
On a quasi-Vaisman manifold, with quasi-Vaisman structure $(J, g, \theta)$, the closed $1$-form $\theta$ does not coincide, in general, with the Lee $1$-form of the Hermitian manifold $(M, J, g)$. For this reason, we will use the following terminology. The $1$-form $\theta$ is called  the \emph{quasi-Lee $1$-form} and its metric dual $U \in {\mathfrak X}(M)$ is called the \emph{quasi-Lee vector field}. The $1$-form $\eta = - \theta \circ J$ is the \emph{quasi-anti-Lee $1$-form} and its metric dual $V \in {\mathfrak X}(M)$ is the \emph{quasi-anti-Lee vector field}. It is clear that
\begin{equation}\label{U-V}
JU = V, \; \; \; JV = -U.
\end{equation}
\begin{remark}
Let $(M, J, g)$ be a quasi-Vaisman manifold. Then, using the general relation
\[
2 g(\nabla_X U, Y) = ({\mathcal L}_U g)(X, Y) + d\theta(X, Y), \; \; \mbox{ for } X, Y \in {\mathfrak X}(M),
\]
where $\nabla$ is the Levi-Civita connection of $g$, we deduce that $U$ (resp. $\theta$) is a parallel vector field (resp. $1$-form).
\end{remark}
\begin{remark}
It is easy to check that a quasi-Vaisman manifold $\left( M,J,g,\theta \right)$ is Vaisman if and only
if $\Omega = d\eta + \theta \wedge \eta$, i.e. if and only if it is an l.c.s.
manifold
of first kind.
\end{remark}
Quasi-Sasakian and quasi-Vaisman manifolds are closely related. We will show in
Section~\ref{mappingtorusquasivaisman} that if $(N, \varphi, \xi, \eta, h)$ is a quasi-Sasakian manifold then the product of $N$ with the real line $\mathbb{R}$ or the circle $S^1$ admits a quasi-Vaisman structure $(J, g)$, with $J$ and $g$ given by
\[
J = \varphi + \xi \otimes \theta - E \otimes \eta, \; \; g = h + \theta \otimes \theta,
\]
where $\theta$ is the standard volume form on $\mathbb{R}$ or on $S^1$ and $E$
is the dual vector field to $\theta$. In particular, the nilpotent Lie group
\[
G = {\heis(1, l) \times \mathbb{R}^{2(n-l)+1}}
\]
admits a {left}-invariant quasi-Vaisman structure. Thus, if $\Gamma$ is a cocompact discrete subgroup then the compact nilmanifold
${\Gamma \backslash G}$ admits a quasi-Vaisman structure.\label{V}
\begin{remark} Note that if ${n = l}$ in the previous example, then the
quasi-Vaisman structure on ${\Gamma \backslash G}$ is Vaisman and if ${l =0}$ then it is
K\"ahler. However, if {$n \neq l$ and $ l \neq 0$} then the compact
nilmanifold {$\Gamma \backslash G$} doesn't admit either  a Vaisman or  a K\"ahler
structure. Indeed, a compact K\"ahler nilmanifold is diffeomorphic to a torus
(see \cite{BeGo,hasegawa}) and a compact Vaisman nilmanifold is diffeomorphic to a compact quotient of a product $\heis(1, k) \times \mathbb{R}$ by a cocompact discrete subgroup (see \cite{bazzoni}). So, we can conclude that the class of the compact quasi-Vaisman manifolds is distinct from  the classes of compact Vaisman and K\"ahler manifolds.
\end{remark}

Next, we will see that the {quasi} Lee and anti-Lee vector fields in a quasi-Vaisman manifold $M$ induce a free action of the abelian Lie algebra $\mathbb{R}^2$ on $M$.
\begin{proposition}
\label{qvaismanaction}
Let $(M, J, g)$ be a quasi-Vaisman manifold and $U, V$ the {quasi} Lee and anti-Lee vector fields on $M$. Then, the map $f: \mathbb{R}^2 \to {\mathfrak X}(M)$ given by
\[
f(a, b) = a U + b V, \; \; \; \mbox{ for } a, b \in \mathbb{R}
\]
is a free action of the abelian Lie algebra $\mathbb{R}^2$ on $M$.
\end{proposition}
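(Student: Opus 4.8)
The plan is to verify the two defining requirements of a free infinitesimal action in turn: first that $f$ is a homomorphism of Lie algebras $\R^2\to\vf(M)$, and then that the evaluation $\ev_p\circ f$ is injective at every point $p\in M$.

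First I would check that $f$ is a Lie algebra homomorphism. Since $\R^2$ is abelian, this is equivalent to the image vector fields commuting, i.e. $[f(a,b),f(a',b')]=0$ for all $a,b,a',b'\in\R$. By bilinearity of the bracket and linearity of $f$, this reduces to the three relations $[U,U]=0$, $[V,V]=0$ and $[U,V]=0$, of which the first two are automatic. For the remaining relation I would use the defining data of the quasi-Vaisman structure: recalling $V=JU$ from \eqref{U-V} and that $\lie_U J=0$, I would compute
\[
[U,V]=\lie_U(JU)=(\lie_U J)(U)+J(\lie_U U),
\]
where the first term vanishes because $\lie_U J=0$ and the second vanishes because $\lie_U U=[U,U]=0$. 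Hence $[U,V]=0$ and $f$ is a homomorphism.

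Next I would prove freeness. Fixing $p\in M$, I must show that $aU_p+bV_p=0$ forces $a=b=0$, and the cleanest route is to observe that $\{U_p,V_p\}$ is orthonormal. Indeed, $U$ is unitary by hypothesis, so $|U_p|=1$; since $J$ is a $g$-isometry, $|V_p|=|JU_p|=|U_p|=1$; and $g(U,V)=g(U,JU)=\Omega(U,U)=0$ by antisymmetry of the fundamental $2$-form $\Omega$. Thus $U_p$ and $V_p$ are linearly independent, so $\ev_p\circ f$ is injective, as required.

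The only genuinely nontrivial point is the bracket identity $[U,V]=0$; everything else is a direct unwinding of the definitions together with the orthonormality of the pair $(U,V)$. I expect this step to be the crux of the argument, since it is where the hypothesis $\lie_U J=0$ is actually used, even though the computation itself is short once the Lie derivative of $J$ applied to $U$ is written out.
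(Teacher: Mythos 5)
Your proof is correct, but your argument for the crux $[U,V]=0$ takes a genuinely different (and more economical) route than the paper's. You compute directly $[U,V]=\lie_U(JU)=(\lie_U J)(U)+J[U,U]=0$, using only the hypothesis $\lie_U J=0$ and the identity $V=JU$; this computation is valid, since $(\lie_U J)(X)$ is by definition $[U,JX]-J[U,X]$. The paper instead argues through the fundamental $2$-form: it first shows $\lie_U\Omega=0$ (which needs both $\lie_U J=0$ and the Killing condition $\lie_U g=0$), notes $i_V\Omega=\theta$, computes $i_{[U,V]}\Omega=i_V\lie_U\Omega-\lie_U i_V\Omega=-\lie_U\theta=0$ (which needs $d\theta=0$ and $\theta(U)\equiv 1$), and finally invokes the non-degeneracy of $\Omega$. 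Your route therefore uses strictly fewer hypotheses --- neither the Killing property of $U$, nor the closedness of $\theta$, nor the non-degeneracy of $\Omega$ enters --- and it isolates exactly where $\lie_U J=0$ is used. What the paper's longer detour buys is a pair of identities, $\lie_U\Omega=0$ and $i_V\Omega=\theta$, which are recycled later, notably in the proof that the foliation generated by $U$ and $V$ is transversely K\"ahler (Proposition~\ref{quasi-Vais-trans-Ka}). As for freeness, your argument is essentially the paper's: you check that $\{U_p,V_p\}$ is orthonormal, while the paper evaluates $\theta$ and $\eta$ on $U$ and $V$; since $\theta$ and $\eta$ are the metric duals of $U$ and $V$, the two checks have identical content.
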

\begin{proof}
Using {(\ref{U-V})} and the fact that $U$ is unitary, we deduce that $V$ is
also unitary. Thus, from {(\ref{eta-theta})}, it follows that
\begin{equation}\label{normalizar}
\theta(U) = \eta(V) = 1, \; \; \; \theta(V) = \eta(U) = 0,
\end{equation}
which implies that the vector fields $U$ and $V$ generate a distribution of rank $2$ on~$M$.

Next, we show that
\[
[U, V] = 0.
\]
In fact, a direct computation proves that
\[
({\mathcal L}_U \Omega)(X, JY) = -({\mathcal L}_Ug)(X, Y) + g(X, ({\mathcal L}_UJ)(JY)),
\]
and, therefore,
\begin{equation}\label{U-automorphism}
{\mathcal L}_U\Omega = 0.
\end{equation}
On the other hand, using {(\ref{U-V})}, it follows that
\begin{equation}\label{V-dual-form}
i_V\Omega = \theta.
\end{equation}
Then, from (\ref{U-automorphism}) and (\ref{V-dual-form}), we have that
\[
i_{[U, V]} \Omega = i_V{\mathcal L}_U \Omega - {\mathcal L}_Ui_V\Omega = - {\mathcal L}_U\theta
\]
and, using (\ref{normalizar}) and the fact that $\theta$ is closed, we deduce that
\[
i_{[U, V]}\Omega = 0.
\]
Since $\Omega$ is non-degenerate, this implies that $[U, V] = 0$.
\end{proof}
Next, we will prove that the {quasi} anti-Lee vector field of a quasi-Vaisman manifold
is  Killing and an infinitesimal automorphism of the complex structure.
We will need the
following result.
\begin{lemma}\label{u-v-invariant}
On a quasi-Vaisman manifold $M$ we have that
\[
{\mathcal L}_{U} \eta = 0,\quad {\mathcal L}_V \theta = 0,\quad {\mathcal L}_V \eta = 0,
\]
where $U, V$ are the {quasi}-Lee and anti-Lee vector fields,  and $\theta, \eta$ are the {quasi}-Lee and anti-Lee $1$-forms.
\end{lemma}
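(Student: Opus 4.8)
The goal is to establish the three identities
\[
\lie_U \eta = 0,\quad \lie_V \theta = 0,\quad \lie_V \eta = 0
\]
on a quasi-Vaisman manifold, where we already know from the definition and from Proposition~\ref{qvaismanaction} that $U$ is unitary, Killing, parallel, satisfies $\lie_U J = 0$, that $\theta$ is closed with $\lie_U\theta = 0$ (indeed $\theta$ is parallel), that $[U,V]=0$, and that the normalization \eqref{normalizar} together with $JU=V$, $JV=-U$ holds. The plan is to derive each identity by combining Cartan's magic formula $\lie_X = d i_X + i_X d$ with the defining relations $\eta = -\theta\circ J$, $i_V\Omega = \theta$, and the already-proven $\lie_U\Omega = 0$.

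First I would treat $\lie_U\eta = 0$. Since $\eta = -\theta\circ J$, I can write $\eta(X) = -\theta(JX)$, so that $\eta$ is obtained from $\theta$ by the action of $J$; more usefully, from \eqref{U-V} and $\theta = i_U g$ one checks $\eta = i_V g = -i_U(g\circ(J\otimes\id))$, or simply $\eta = g(V,-)$. Because $\lie_U g = 0$ (U is Killing), $\lie_U J = 0$, and $\lie_U U = 0$ (as $[U,U]=0$ trivially, and $U$ is $U$-invariant), the vector field $V = JU$ satisfies $\lie_U V = (\lie_U J)U + J\lie_U U = 0$, which also re-proves $[U,V]=0$. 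Then $\lie_U\eta = \lie_U(g(V,-)) = (\lie_U g)(V,-) + g(\lie_U V,-) = 0$, giving the first identity.

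For $\lie_V\theta = 0$ I would use Cartan's formula: $\lie_V\theta = d(i_V\theta) + i_V(d\theta)$. The second term vanishes because $\theta$ is closed by hypothesis. For the first term, $i_V\theta = \theta(V) = 0$ by \eqref{normalizar}, so $d(i_V\theta) = 0$, and hence $\lie_V\theta = 0$. This is the easiest of the three.

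The third identity $\lie_V\eta = 0$ is the main obstacle, since $\eta$ is not closed in general, so the Cartan-formula shortcut does not immediately collapse. Here the plan is to exploit $i_V\Omega = \theta$ from \eqref{V-dual-form} together with the relation $d\Omega = d\eta\wedge\theta$ from the quasi-Vaisman definition. Writing $\lie_V\eta = d(i_V\eta) + i_V(d\eta)$, I would first note $i_V\eta = \eta(V) = 1$ by \eqref{normalizar}, so $d(i_V\eta) = 0$ and the task reduces to showing $i_V\,d\eta = 0$. To handle $i_V\,d\eta$ I would contract the structural equation: apply $i_U$ to $d\Omega = d\eta\wedge\theta$ and use $i_U\theta = 1$, $i_U\,d\eta = \lie_U\eta - d(i_U\eta)$ where $i_U\eta = \eta(U) = 0$ and $\lie_U\eta = 0$ (just proven), giving $i_U\,d\eta = 0$; this yields $i_U\,d\Omega = -\,d\eta$ after accounting for signs, hence an expression for $d\eta$ as $-i_U\,d\Omega$. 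Then contracting once more with $V$, using $\lie_U\Omega = 0$ so $d\Omega$ is $U$-basic in the relevant sense together with $[U,V]=0$, and the skew-symmetry that forces $i_V i_U\,d\Omega$ to combine with the already-known vanishing terms, I expect $i_V\,d\eta = 0$ to drop out. The delicate point will be bookkeeping the signs and correctly using $i_V\Omega = \theta$ and $i_U\Omega = -\eta$ (which follows from $JU = V$ and the definitions) when expanding $i_V i_U\,d\Omega$; getting these contractions to cancel is where the genuine computation lies.
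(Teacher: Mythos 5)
Your treatment of the first two identities is correct. For $\lie_U\eta=0$ you in fact take a route different from the paper's: you write $\eta=g(V,-)$, note $\lie_U V=(\lie_U J)(U)+J[U,U]=0$, and conclude by the Killing property of $U$; the paper instead evaluates $(\lie_U\eta)(JX)$ and uses $\lie_U J=0$ to reduce to $\lie_U\theta=0$. Both work, and yours is arguably cleaner. Your proof of $\lie_V\theta=0$ coincides with the paper's.

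The third identity, however, contains a genuine gap, and it is not a matter of sign bookkeeping. Your reduction to $i_V\,d\eta=0$ is correct but contentless: since $\eta(V)=1$, Cartan's formula gives $i_V\,d\eta=\lie_V\eta$ on the nose, so nothing has been gained. Worse, the contractions you then propose are provably circular. One has $i_U\,d\Omega=(i_U\,d\eta)\wedge\theta+d\eta\,\theta(U)=d\eta$ (your sign is off here, but it does not matter), and then, using $[U,V]=0$, $i_V\Omega=\theta$, $d\theta=0$ and $i_U\Omega=-\eta$,
\begin{equation*}
i_V\,d\eta \;=\; i_V i_U\,d\Omega \;=\; -\,i_U i_V\,d\Omega \;=\; -\,i_U\lie_V\Omega \;=\; -\,\lie_V(i_U\Omega) \;=\; \lie_V\eta,
\end{equation*}
which is exactly the identity you started from; likewise $i_V\,d\Omega=\lie_V\Omega$ gives $\lie_V\Omega=(\lie_V\eta)\wedge\theta$, so proving $\lie_V\Omega=0$ is equivalent to, not a stepping stone toward, the claim (and indeed in the paper $\lie_V\Omega=0$ is \emph{deduced from} Lemma~\ref{u-v-invariant} in Proposition~\ref{quasi-Vais-trans-Ka}, not available before it). The missing ingredient is the integrability of $J$, which your plan never invokes. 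Since $\eta=-\theta\circ J$, one has $\lie_V\eta=-(\lie_V\theta)\circ J-\theta\circ(\lie_V J)=-\theta\circ(\lie_V J)$ once $\lie_V\theta=0$ is known, and a short computation shows that $\lie_U J=0$ together with $JU=V$ forces $(\lie_V J)(X)=N_J(U,X)$. Hence the third identity is \emph{precisely} the statement that $\theta(N_J(U,X))=0$ for all $X$; it cannot drop out of manipulations of $d\Omega=d\eta\wedge\theta$, $\lie_U\Omega=0$ and $[U,V]=0$ alone, because those never see the Nijenhuis torsion. This is exactly where the paper's proof goes: it evaluates $(\lie_V\eta)(JX)$, uses $N_J(U,X)=0$ (the Hermitian hypothesis) and $JU=V$ to rewrite it as $(\lie_V\theta)(X)+\theta\left((\lie_U J)(X)\right)$, and both terms vanish. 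To repair your argument, replace the contraction step by this use of $N_J(U,X)=0$.
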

\begin{proof} First we will show that
\begin{equation}\label{v-U-invariant}
{\mathcal L}_U \eta = 0.
\end{equation}
If $X$ is a vector field on $M$ then, using {(\ref{eta-theta})}, it follows that
\[
({\mathcal L}_U \eta)(JX) = U(\theta(X)) + \theta(J[U, JX]).
\]
Now, since $\lie_U J=0$, we have  $[U, JX] = J[U, X]$ and we deduce that
\[
({\mathcal L}_U \eta)(JX) = ({\mathcal L}_U\theta)(X) = d\theta(U, X) + d(\theta(U))(X) = 0.
\]
On the other, using {again} that $\theta$ is closed and (\ref{normalizar}), we obtain that
\[
{\mathcal L}_V \theta = i_V(d\theta) + d(\theta(V)) = 0.
\]
Finally, we will prove that ${\mathcal L}_V \eta = 0$. In fact, if $X$ is a vector field on $M$ then, from {(\ref{eta-theta}), (\ref{U-V})} and since $N_J(U, X) = 0$, we have that
\[
({\mathcal L}_V \eta)(JX) = V(\theta(X)) + \theta([JU, X] + [U, JX] -J[U, X]) = ({\mathcal L}_V \theta)(X) + \theta(({\mathcal L}_U J)(X)).
\]
Thus, using (\ref{v-U-invariant}) and the fact that $\lie_U J=0$, we conclude that
\mbox{$ ({\mathcal L}_V \eta)(JX) = 0$}.
\end{proof}
Next, using the previous result, we will prove that the flat foliation generated by $U$ and $V$ is transversely K\"ahler.
\begin{proposition}\label{quasi-Vais-trans-Ka}
On a quasi-Vaisman manifold the flat foliation generated by the {quasi} Lee and anti-Lee vector fields is transversely K\"ahler.
\end{proposition}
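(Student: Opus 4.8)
The plan is to reduce the statement to the transverse-K\"ahler criterion proved above, namely that a foliation carrying a foliated complex structure together with an invariant transverse metric whose associated transverse fundamental form is closed is transversely K\"ahler. By Proposition~\ref{qvaismanaction}, $U$ and $V$ commute and generate a free $\R^2$-action, so they define a flat rank-$2$ foliation $\fol$ with tangent distribution $\mathcal{D}_f=\langle U,V\rangle$ and normal bundle $\nu\fol$. I would also use that, since $J$ is a $g$-isometry, it preserves $\mathcal{D}_f$ by \eqref{U-V} and hence also $H:=\mathcal{D}_f^{\perp}$. Thus $J$ restricts to a transverse almost complex structure $J^T$ on $H\cong\nu\fol$ with $(J^T)^2=-\id$, whose integrability $[J^T,J^T]_{FN}=0$ is inherited from $N_J=0$, and the metric $g$ restricts to a transverse metric $g^T$ on $H$.

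First I would verify that $U$ and $V$ act by infinitesimal automorphisms of $(J^T,g^T)$, which is exactly what makes $J^T$ foliated and $g^T$ invariant. Along $U$ everything is part of the quasi-Vaisman hypotheses: $\lie_U J=0$ and $\lie_U g=0$. Along $V$ I would first obtain $\lie_V J=0$ from the standard identity expressing $\lie_{JU}J-J\,\lie_U J$ through the Nijenhuis tensor $N_J(U,\cdot)$, combined with $\lie_U J=0$ and $N_J=0$. To see that $V$ is Killing I would compute $\lie_V\Omega=i_V\,d\Omega+d\,i_V\Omega$: using the defining identity $d\Omega=d\eta\wedge\theta$, the relation $i_V\Omega=\theta$ from \eqref{V-dual-form}, $d\theta=0$, and $\theta(V)=0$ from \eqref{normalizar}, this collapses to $\lie_V\Omega=(i_V\,d\eta)\wedge\theta$; and since $i_V\,d\eta=\lie_V\eta-d(\eta(V))=\lie_V\eta$, Lemma~\ref{u-v-invariant} gives $i_V\,d\eta=0$, whence $\lie_V\Omega=0$. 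Finally, from the pointwise identity $(\lie_V\Omega)(X,JY)=-(\lie_V g)(X,Y)+g(X,(\lie_V J)(JY))$ and $\lie_V J=0$, I would conclude $\lie_V g=0$, so that $V$ is Killing and an infinitesimal automorphism of $J$, as announced.

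It then remains to exhibit the closed transverse fundamental form. I would set $\Omega^T:=\Omega+\theta\wedge\eta$: using $i_U\Omega=-\eta$ and $i_V\Omega=\theta$ one checks $i_U\Omega^T=i_V\Omega^T=0$, so $\Omega^T$ is horizontal, and since $\theta$ and $\eta$ vanish on $H$ its restriction to $H$ is precisely $g^T(\cdot,J^T\cdot)$. A short computation gives $d\Omega^T=d\eta\wedge\theta-\theta\wedge d\eta=0$ (because $d\theta=0$ and $d\eta\wedge\theta=\theta\wedge d\eta$), and horizontality together with closedness then shows $\lie_U\Omega^T=\lie_V\Omega^T=0$, so that $\Omega^T$ is basic. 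Feeding $J^T$, $g^T$ and the basic closed form $\Omega^T$ into the criterion above would yield that $\fol$ is transversely K\"ahler. The hard part will be the Killing property of $V$: it is exactly there that Lemma~\ref{u-v-invariant} (through $\lie_V\eta=0$) and the defining quasi-Vaisman identity $d\Omega=d\eta\wedge\theta$ are indispensable, and the very same two inputs are what force $\Omega^T$ to be closed.
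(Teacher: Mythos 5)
Your proposal is correct and follows essentially the same route as the paper's proof: you derive $\lie_V J=0$ from $N_J(U,\cdot)=0$ and $\lie_U J=0$, get $\lie_V\Omega=0$ via Cartan's formula combined with Lemma~\ref{u-v-invariant} and \eqref{normalizar}, deduce that $V$ is Killing from the same polarization identity, and exhibit the same transverse K\"ahler form, since your $\Omega^T=\Omega+\theta\wedge\eta$ equals the paper's $\hat\Omega=\Omega-\eta\wedge\theta$. The only difference is that you spell out the horizontality and closedness checks for $\Omega^T$ that the paper leaves implicit, which is a harmless (indeed welcome) addition.
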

\begin{proof}
Let $(M, J, g)$ be a quasi-Vaisman manifold with {quasi} Lee and anti-Lee vector fields $U$ and $V$, respectively. Then, we have that
\[
{\mathcal L}_U J = 0, \; \; {\mathcal L}_Ug = 0.
\]
Next, we will show that $\lie_V J=0$. Indeed, if $X$ is a vector field on $M$
then, since $N_J(U, X) = 0$ and $JU=V$, we deduce that
\[
({\mathcal L}_V J)(X) = J[U, JX] + [U, X] = -({\mathcal L}_UJ)(JX),
\]
which, using $\lie_U J=0$, implies that $({\mathcal L}_V J)(X) = 0$.
Now, we will prove that ${\mathcal L}_V\Omega = 0$. In fact,
\[
{\mathcal L}_V\Omega = d(i_V\Omega) + i_V(d\Omega) = d\theta + i_V(d\eta \wedge \theta) = i_V(d\eta \wedge \theta).
\]
Therefore, from (\ref{normalizar}) and Lemma \ref{u-v-invariant}, we obtain that
\[
{\mathcal L}_V\Omega = {\mathcal L}_V \eta \wedge \theta = 0.
\]
Now, we will see that $V$ is Killing. If $X, Y$ are vector fields on $M$, we deduce that
\[
0 = ({\mathcal L}_V\Omega)(X, JY) = - ({\mathcal L}_Vg)(X, Y) + g(X, ({\mathcal L}_VJ)(JY))
\]
and, since $\lie_V J=0$, we conclude that ${\mathcal L}_V g = 0$.

Thus, the Hermitian structure $(J, g)$ induces a transversely K\"ahler structure {$(\hat{J}, \hat{g})$} on $M$ with respect to the flat foliation {$\fol$} generated by $U$ and $V$. {In fact, using that $N_J = 0$, we deduce that the transverse Nijenhuis torsion of $\hat{J}$ is zero. On the other hand, the transverse K\"ahler $2$-form $\hat{\Omega} = \Omega - \eta \wedge \theta$ is basic and closed.}
\end{proof}
Now we use Theorem~\ref{chevalley} to provide a  model for quasi-Vaisman
manifolds.
\begin{theorem}\label{model_pV}
Let $(M^{2n+2},J,  g)$ be a compact quasi-Vaisman manifold and $U$, $V$, $\fol$
defined as above. Then the CDGA
\begin{equation}
\label{eq:model_pV}
\left(H^*_B(M,\fol) \otimes \bw \left\langle x,y \right\rangle, dx = 0, dy=
[d\eta]_B \right)
\end{equation}
is quasi-isomorphic to $\Omega^*(M)$. In
other words the CDGA~\eqref{eq:model_pV} is a model of~$M$.
\end{theorem}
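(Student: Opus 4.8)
The plan is to mirror the proof of Theorem~\ref{model_qS}, replacing the rank-$1$ action of the Reeb field by the rank-$2$ abelian action $f\colon \R^2 \to \vf(M)$, $f(a,b) = aU + bV$, furnished by Proposition~\ref{qvaismanaction}. All the geometric facts I would need are already in hand: by Proposition~\ref{qvaismanaction} the fields $U$ and $V$ commute; by definition $U$ is unitary and Killing, and by Proposition~\ref{quasi-Vais-trans-Ka} so is $V$; finally, by Proposition~\ref{quasi-Vais-trans-Ka} the foliation $\fol$ generated by $U$ and $V$ is transversely K\"ahler. Moreover $\{U,V\}$ is orthonormal, since \eqref{normalizar} gives $g(U,U) = \theta(U) = 1$, $g(V,V) = \eta(V) = 1$, and $g(U,V) = \theta(V) = 0$.

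First I would invoke Theorem~\ref{invariant}: as every $f(a,b)$ is Killing, the inclusion $\Omega^*_{\lie_f}(M) \hookrightarrow \Omega^*(M)$ is a quasi-isomorphism, so it suffices to model the invariant complex. Because the functions $g(f(e_i),f(e_j))$ are constant, Corollary~\ref{alg-conn-orthonormal} applies to the standard orthonormal basis $e_1, e_2$ of $\R^2$ (with $f(e_1)=U$, $f(e_2)=V$) and yields an algebraic connection $\chi$ determined by
\[
\chi(e^1) = g(U,-) = \theta, \qquad \chi(e^2) = g(V,-) = \eta.
\]
Feeding this into the Chevalley isomorphism of Theorem~\ref{chevalley} identifies $\Omega^*_{\lie_f}(M)$ with the Hirsch extension $\left(\Omega^*_B(M,\fol)\otimes\bw\langle x,y\rangle, d_{\bar\chi}\right)$, where $\bar\chi = d\circ\chi$. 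The decisive computation is that $\bar\chi(e^1) = d\theta = 0$, which uses that $\theta$ is closed by the quasi-Vaisman hypothesis, whereas $\bar\chi(e^2) = d\eta$. Writing $x = e^1$, $y = e^2$, this is exactly the intermediate CDGA with $dx = 0$ and $dy = d\eta$; moreover the general theory of Section~\ref{Chevalley} ensures that $d\eta = \bar\chi(e^2)$ is basic, so that $[d\eta]_B$ is well defined.

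To obtain the stated model I would then pass from basic forms to basic cohomology. Since $\fol$ is transversely K\"ahler, \cite[Theorem~3]{wolak} shows that $\Omega^*_B(M,\fol)\otimes\C$ is formal, and \cite[Theorem~12.1]{sullivan} guarantees that formality is preserved under the extension $\R\subset\C$; hence $\Omega^*_B(M,\fol)$ is formal over $\R$. The generating space $\langle x,y\rangle$ is concentrated in degree $1$, so Corollary~\ref{substitution} applies and shows that
\[
\left(\Omega^*_B(M,\fol)\otimes\bw\langle x,y\rangle,\, dx=0,\, dy=d\eta\right)
\]
is quasi-isomorphic to
\[
\left(H^*_B(M,\fol)\otimes\bw\langle x,y\rangle,\, dx=0,\, dy=[d\eta]_B\right),
\]
which is the asserted model of $M$.

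Since every ingredient is already available, I do not expect a single hard step; the effort lies in verifying that the two-dimensional abelian case of the Chevalley machinery goes through verbatim. The genuinely essential point, and the only place where the two generators play asymmetric roles, is that the closedness of $\theta$ forces $dx = 0$ (distinguishing this from a generic rank-$2$ Hirsch extension), while it is the transverse K\"ahler structure of $\fol$ that supplies the formality needed in the final step.
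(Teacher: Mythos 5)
Your proposal is correct and follows essentially the same route as the paper's own proof: invariant forms via Theorem~\ref{invariant}, the algebraic connection $\chi(x)=\theta$, $\chi(y)=\eta$ from Corollary~\ref{alg-conn-orthonormal}, the Chevalley isomorphism of Theorem~\ref{chevalley} giving the intermediate Hirsch extension with $dx=d\theta=0$, $dy=d\eta$, and finally transverse K\"ahlerness plus \cite{wolak}, \cite{sullivan} and Corollary~\ref{substitution} to replace basic forms by basic cohomology. The only (harmless) difference is one of bookkeeping: you cite Proposition~\ref{quasi-Vais-trans-Ka} for the fact that $V$ is Killing, which is indeed established inside its proof rather than in its statement.
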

\begin{proof}
We consider the action $f\colon \R^2\to \vf(M)$ defined in
Proposition~\ref{qvaismanaction}.
Since the image of $f$ is generated by $U$ and $V$ and they are Killing, we get by Theorem~\ref{invariant} that
the inclusion $\Omega^*_{\lie_U,\lie_V}(M) = \Omega^*_{\lie_f}(M)  \to \Omega^*(M)$ is a
quasi-isomorphism.

Since the vector fields $U$ and $V$ are unitary and mutually orthogonal, we can apply
Corollary~\ref{alg-conn-orthonormal} to compute an algebraic connection
$\chi$ for the operation $i_f$.
For an appropriate choice of a basis $\left\{ x,y \right\}$ of $\left( \R^2
\right)^*$ we get
\begin{equation*}
\chi(x) = g(U,-) = \theta, \quad \chi(y) = g(V,-) = \eta.
\end{equation*}
By Theorem~\ref{chevalley}
we obtain that
$$\left(\Omega^*_B(M,\fol)\otimes \bw \left\langle x,y \right\rangle, dx =0, dy
=d\eta\right) $$
is quasi-isomorphic to $\Omega^*_{\lie_U,\lie_V}(M)$ and thus also to
$\Omega^*(M)$.

{By Proposition~\ref{quasi-Vais-trans-Ka}} the foliated manifold $(M,\fol)$ is transversely
K\"ahler. Now
one can proceed as in the proof of Theorem~\ref{model_qS}, replacing where
needed $\xi$ with $\fol$,
in order to get
that
\begin{equation*}
\left(\Omega^*_B(M,\fol)\otimes \bw \left\langle x,y \right\rangle,
dx=0,dy=d\eta \right)
\end{equation*}
and
\begin{equation*}
\left(H^*_B(M,\fol)\otimes \bw \left\langle x,y \right\rangle, dx
=0, dy=[d\eta]_B \right)
\end{equation*}
are quasi-isomorphic. This completes the proof of the theorem.
\end{proof}
Note that the model in Theorem~\ref{model_pV} is in fact an almost formal CDGA.
To see this we can take
\begin{equation*}
A:= H_B^*(M, \fol) \otimes \bw \left\langle x \right\rangle
\end{equation*}
and $z=[d\eta]_B$ considered as an element in $A$.

{Now, as in the quasi-Sasakian case, we can also introduce the following definition.}
\begin{definition}
{The index of a quasi-Vaisman manifold $(M^{2n+2}, J, g)$ with quasi anti-Lee $1$-form $\eta$ is the natural number $l$, $0 \leq l \leq n$, which satisfies
\[
[d \eta]_B^{l} \neq 0 \; \; \mbox{ and } \; \; [d\eta]_B^{l +1} = 0.
\]
}
\end{definition}
{So, the model (\ref{eq:model_pV}) of a compact quasi-Vaisman manifold of  index
$l$ is an almost formal CDGA of the same index.}
\begin{remark}\label{index-Vaisman}
{
Let $M$ be a compact Vaisman manifold of dimension $2n+2$. Then, the index of
$M$ is maximal and equal to $n$. Indeed, if $\Omega$  is the fundamental $2$-form of $M$ and $\theta$, $\eta$ are the Lee and anti-Lee $1$-form then, using Proposition 4.3 in \cite{DrOr} (see also \cite{vaisman_roma}), we have that $\Omega = d\eta + \eta \wedge \theta$. On the other hand, if $U$ and $V$ are the Lee and anti-Lee vector fields then, as we know, $i_Ud\eta = i_Vd\eta = 0$ and, since $\Omega$ is non-degenerate, we conclude that
\[
\nu = \theta \wedge \eta \wedge (d\eta)^n
\]
is a volume form on $M$. Now, suppose that the index of $M$ is less than $n$.
Then, there exists a basic $(2n-1)$-form $\mu$ such that  $d\mu = (d\eta)^n$. So,
\[
\nu = \theta \wedge \eta \wedge d\mu = d(\theta \wedge \eta \wedge \mu) + \theta \wedge d\eta \wedge \mu.
\]
But, since $\mu$ is a basic form, we have that $i_V(\theta \wedge d \eta \wedge \mu) = 0$ and, therefore, $\theta \wedge d \eta \wedge \mu = 0$. This implies that $\nu = d(\theta \wedge \eta \wedge \mu)$ which, using that $M$ is compact and that $\nu$ is a volume form, is a contradiction.
}
\end{remark}
\section{Almost formal nilmanifolds}\label{nilmanifolds}


A compact homogeneous space of a nilpotent Lie group is called  a \emph{nilmanifold}. It
was proved by Malcev~\cite{malcev} that every nilmanifold is diffeomorphic to
$\Gamma\backslash G$ for some nilpotent Lie group $G$ and a cocompact subgroup $\Gamma$
of $G$.

In~\cite{hasegawa} Hasegawa determined the minimal Sullivan model of a
nilmanifold using Nomizu theorem. Namely
\begin{theorem}[{\cite{hasegawa}}]\label{hasegawa}
Let $M \cong \Gamma\backslash G$ be a compact nilmanifold. Denote by $\la$ the Lie
algebra of $G$ and by $\left( \bw \la^*,d^{CE} \right)$ its Chevalley-Eilenberg
complex considered as a CDGA with the multiplication of the exterior algebra.
Then $\left( \bw\la^*,d^{CE} \right)$ is a minimal model of $\Omega^*(M)$.
\end{theorem}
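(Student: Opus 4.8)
The plan is to assemble the statement from two ingredients: \emph{Nomizu's theorem}, which identifies the de Rham cohomology of the nilmanifold with the cohomology of the Chevalley--Eilenberg complex, and the \emph{nilpotency} of $\la$, which forces $(\bw\la^*,d^{CE})$ to be a minimal Sullivan algebra. Once both are in place, the uniqueness of the minimal model recalled above closes the argument: a minimal Sullivan algebra quasi-isomorphic to $\Omega^*(M)$ must be \emph{the} minimal model of $M$.

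First I would set up the comparison map. The left-invariant forms on $G$ descend to $M=\Gamma\backslash G$ and are canonically identified with $\bw\la^*$; under this identification the de Rham differential restricts to $d^{CE}$, since for $\alpha\in\la^*$ the Maurer--Cartan structure equations give $d\alpha(X,Y)=-\alpha([X,Y])$. Hence the inclusion $\iota\colon(\bw\la^*,d^{CE})\hookrightarrow(\Omega^*(M),d)$ is a morphism of CDGAs. The content of Nomizu's theorem is that $H^*(\iota)$ is an isomorphism in every degree, and I would invoke it directly. Consequently $\iota$ is a (direct) quasi-isomorphism, so $(\bw\la^*,d^{CE})$ is a model of $M$ in the sense of the excerpt.

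Next I would verify that $(\bw\la^*,d^{CE})$ is a minimal Sullivan algebra, which requires two observations. The differential sends generators to decomposables: since $d^{CE}\colon\la^*\to\bw^2\la^*$ is precisely the transpose of the bracket $\bw^2\la\to\la$, one has $d^{CE}(\la^*)\subset\bw^2\la^*$, so the linear part of $d^{CE}$ vanishes, which is the standard minimality condition and holds automatically. To exhibit the Sullivan tower I would use nilpotency: the lower central series of $\la$ yields a Malcev basis $x_1,\dots,x_n$ of $\la^*$, ordered so that $d^{CE}x_k\in\bw\langle x_1,\dots,x_{k-1}\rangle$ for every $k$ (possible exactly because every bracket lands in a strictly deeper term of the series). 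The resulting chain $\K\subset\bw\langle x_1\rangle\subset\dots\subset\bw\la^*$ realizes $(\bw\la^*,d^{CE})$ as an iterated elementary extension by one-dimensional spaces in degree $1$, hence as a finitely generated Sullivan algebra; together with the decomposability of $d^{CE}$ it is minimal in the sense of the excerpt, this equivalence being standard rational homotopy theory (see~\cite{felix_book}).

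Having a minimal Sullivan algebra quasi-isomorphic to $\Omega^*(M)$, the uniqueness of the minimal model identifies $(\bw\la^*,d^{CE})$ with the minimal model of $M$, which proves the theorem. The only genuinely deep step is Nomizu's isomorphism; the identification of $d$ with $d^{CE}$ and the nilpotency-to-minimality passage are formal. I would therefore expect the main obstacle to be Nomizu's theorem, whose proof runs by induction on the nilpotency length, viewing $M$ as an iterated principal torus bundle and matching the Serre spectral sequence of each stage against the filtration of $\bw\la^*$ by the central series. Since the statement is attributed to~\cite{hasegawa} (and through it to Nomizu), I would take this as an external input rather than reprove it.
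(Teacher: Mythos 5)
Your proposal is correct and follows exactly the route the paper itself points to: the paper gives no proof of Theorem~\ref{hasegawa}, citing it to \cite{hasegawa} with the remark that Hasegawa obtained it ``using Nomizu theorem,'' which is precisely your argument (Nomizu's quasi-isomorphism $\bw\la^*\hookrightarrow\Omega^*(M)$, plus minimality of $(\bw\la^*,d^{CE})$ from the nilpotent filtration and decomposability of $d^{CE}$, plus uniqueness of the minimal model). Your added care in distinguishing the Sullivan tower condition (which genuinely needs nilpotency) from mere decomposability of the differential is exactly the right point to flag, and the reconstruction is sound.
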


The Heisenberg Lie algebra $\heisla(1,l)$ of the Heisenberg Lie group $\heis\left(
1,l \right)$  has the following multiplicative structure with
respect to a suitable basis $p_1$, $p_2$,\dots, $p_l$, $q_1$, $q_2$, \dots, $q_l$,~$h$:
\begin{equation}
\label{mult}
[p_i,p_j] =0, \quad [q_i,q_j]  = 0,\quad [p_i, q_j] = \delta_{ij}h, \quad [p_i,h]=0, \quad
[q_i, h] =0
\end{equation}
for all possible pairs $i$ and $j$. 
Such a basis may be chosen as follows
\[
p_i = X_i, \; \; q_i = X_{l+i}, \; \; h = X_{2l +1}, \; \; \mbox{ for } i \in \{1, \dots, l\},
\]
where $X_i, X_{l+i}$ and $X_{2l+1}$ are the left-invariant vector fields on $H(1, l)$ given by (\ref{basis-left-vector}).  
It is clear that the Lie algebra of
$\heis(1,l)\times \R^r$ is $\heisla(1,l)\oplus \abla_r$, where $\abla_r$
denotes the $r$-dimensional abelian Lie algebra. We will write
$\la$ instead of $\heisla(1,l)\oplus \abla_r$ in this discussion to avoid cumbersome
formulas.  Choose a basis
$u_1$, \dots,~$u_r$ of $\abla_r$. Then all the elements $u_1$,~\dots,~$u_r$ are
in the center of $\la$ and this with~\eqref{mult}
determines its
multiplicative structure.
Thus the Chevalley-Eilenberg differential on \mbox{$(\heisla(1,l)\oplus
\abla_r)^*$} is
given by
\begin{equation*}
dh^* = - \sum_{j=1}^l p^*_j \wedge q^*_j,\quad dp_j^*=dq_j^* = du_j^* =0,
\end{equation*}
with $\{p_1^*, \dots, p_l^*, q_1^*, \dots, q_l^*, h^*\}$ the dual basis of $\{p_1, \dots, p_l, q_1, \dots, q_l, h\}$. 
Denote by $A$ the  subalgebra of $\bw \la^* $ generated by $p_i^*$, $q_i^*$, and
$u_j^*$. Then
\begin{equation*}
A=\bw \left\langle p_1^*,\dots,p_l^*, q_1^*,\dots, q_l^*, u_1^*,\dots, u_r^* \right\rangle
\end{equation*}
and the restriction of $d^{CE}$ to $A$ is zero.
Thus $\left(\bw \la^*, d^{CE}\right)$ can be identified with $\left(A\otimes \bw\left\langle
h^* \right\rangle, dh^*=z \right)$, where
\begin{equation*}
z=- \sum_{j=1}^l p^*_j \wedge q^*_j.
\end{equation*}
Thus we see that a nilmanifold $M$ modelled on $\heis(1,l)\times \R^r$ has an
almost formal model of index $l$.
In Theorem~\ref{heisenberg} we will show that
these examples exhaust all nilmanifolds having almost formal models.
We will use it in order to classify nilmanifolds admitting quasi-Sasakian or
quasi-Vaisman structure.

We start by  proving a vanishing property for  general almost formal manifolds.
\begin{proposition}
\label{dimension}
Suppose $M$ is an $m$-dimensional manifold that admits an almost formal model
$\left( A\otimes \bw \left\langle y \right\rangle, dy=z \right)$ of index
$l$. Then $A_n=0$ for all
$n\ge m$.
\end{proposition}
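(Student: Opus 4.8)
The plan is to compute the cohomology of the model $B:=\left(A\otimes\bw\left\langle y\right\rangle,\,dy=z\right)$ explicitly and compare it with $H^*(M)$, which vanishes above degree $m=\dim M$. First I would use the decomposition $B^n=A_n\oplus A_{n-1}\,y$. Since $d$ vanishes on $A$ and $dy=z$, the differential acts by $a+by\mapsto \pm\,bz$ (the sign is irrelevant for what follows), so in degree $n$ the cocycles are $A_n\oplus\ker\!\left(z\colon A_{n-1}\to A_{n+1}\right)y$ while the coboundaries are $z\,A_{n-2}\subseteq A_n$. Here and below I write $z$ also for the operator of multiplication by $z$. This yields
\begin{equation*}
H^n(B)\cong\mathrm{coker}\!\left(z\colon A_{n-2}\to A_n\right)\;\oplus\;\ker\!\left(z\colon A_{n-1}\to A_{n+1}\right).
\end{equation*}

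Next, because $B$ is a model of $M$ we have $H^n(B)\cong H^n(M)$, and the latter is zero for every $n>m$ simply because $\Omega^n(M)=0$ in those degrees (no compactness is needed for this). Reading off the second summand of the displayed formula, I conclude that $\ker\!\left(z\colon A_{n-1}\to A_{n+1}\right)=0$ for all $n>m$; equivalently, multiplication by $z$ is \emph{injective} as a map $z\colon A_j\to A_{j+2}$ for every $j\ge m$.

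The final step combines this injectivity with the nilpotency hypothesis $z^{l+1}=0$. Fix $j\ge m$ and regard $z^{l+1}\colon A_j\to A_{j+2(l+1)}$ as the composite of the $l+1$ maps $A_{j+2i}\xrightarrow{\,z\,}A_{j+2i+2}$ for $0\le i\le l$. Each source degree $j+2i$ is $\ge m$, so by the previous step every factor is injective and hence so is the composite. But $z^{l+1}=0$ in $A$, so this composite is the zero map; a map that is simultaneously injective and zero has trivial domain, giving $A_j=0$. Since $j\ge m$ was arbitrary, $A_n=0$ for all $n\ge m$, which is the claim.

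I do not expect any genuine obstacle here: the cohomology computation is routine bookkeeping, and the whole argument rests on the single clean observation that vanishing of $H^*(M)$ above degree $m$ upgrades to injectivity of multiplication by $z$ in all degrees $\ge m$, which is incompatible with $z$ being nilpotent unless $A$ is concentrated in degrees below $m$. The only point to state with care is the identification of $H^n(B)$, so that the relevant kernel summand is correctly isolated.
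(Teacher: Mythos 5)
Your proof is correct and takes essentially the same approach as the paper: both arguments deduce that multiplication by $z$ is injective on $A_j$ for all $j\ge m$ from the vanishing of $H^n(M)$ above degree $m$ (the paper by noting that a nonzero $a\in A_n$ with $az=0$ would make $[a\otimes y]$ a nonzero class, since coboundaries lie in $A$; you by computing $H^n\bigl(A\otimes\bw\langle y\rangle\bigr)$ in closed form and reading off the kernel summand), and both then conclude by observing that this injectivity is incompatible with $z^{l+1}=0$ unless $A_j=0$. There are no gaps.
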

\begin{proof}
First we show that for every $n\ge m$ the map
\begin{equation}
\label{anan}
\begin{aligned}
A_n & \to A_{n+2}\\
a & \mapsto az
\end{aligned}
\end{equation}
is injective.
Suppose $a\in A_n\setminus\{0\} $ is such that $az=0$. Then $d(a\otimes y)
=az=0$. Since the image of $d \colon \left( A\otimes \bw\left\langle y
\right\rangle \right)_n \to \left( A\otimes \bw\left\langle y
\right\rangle \right)_{n+1}$ lies inside of $A_{n+1}$, this implies that
$\left[ a \otimes y \right]$ is a non-zero cohomology class in $H^{n+1}\left( A\otimes
\bw\left\langle y \right\rangle
\right) \cong H^{n+1}(M)$. As $M$ is of dimension $m<n+1$, we get a
contradiction. This shows that the maps~\eqref{anan} are injective.
But then also the maps
\begin{equation*}
\begin{aligned}
A_n &\to A_{n+2l+2}\\
a & \mapsto az^{l+1}
\end{aligned}
\end{equation*}
are injective for all $n\ge m$. Since $z^{l+1}=0$ this implies $A_n=0$.
\end{proof}
Now we give a characterization of almost formal nilmanifolds.
\begin{theorem}\label{heisenberg}
Let $G$ be an $m$-dimensional $1$-connected nilpotent Lie group with the Lie algebra
$\la$ and $\Gamma$ a cocompact subgroup of $G$. Then the manifold $\Gamma\backslash G$
admits an almost formal  model of  index $l$ if and only if $G$ is isomorphic to
$\heis(1,l)\times \R^{m-2l-1}$.
\end{theorem}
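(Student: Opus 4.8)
The plan is to prove the two implications separately; the backward one is essentially the computation carried out just before the statement, so I would spend most of the effort on the forward direction. For ($\Leftarrow$): if $G\cong\heis(1,l)\times\R^{m-2l-1}$, then the discussion preceding the theorem already writes $(\bw\la^*,d^{CE})$ in the form $(A\otimes\bw\langle h^*\rangle,\,dh^*=z)$ with $A=\bw\langle p_1^*,\dots,p_l^*,q_1^*,\dots,q_l^*,u_1^*,\dots,u_{m-2l-1}^*\rangle$ and $z=-\sum_{j=1}^l p_j^*\wedge q_j^*$, which is an almost formal model of index $l$; by Theorem~\ref{hasegawa} it is a model of $\Gamma\backslash G$.

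For ($\Rightarrow$), suppose $\Gamma\backslash G$ has an almost formal model $C=(A\otimes\bw\langle y\rangle,\,dy=z)$ of index $l$. By Theorem~\ref{hasegawa} the Chevalley--Eilenberg complex $(\bw\la^*,d^{CE})$ is the minimal model of $\Gamma\backslash G$, hence of $C$. First I would record the cohomology of $C$: since $A$ has zero differential, a direct computation gives, for every $n$,
\begin{equation*}
H^n(C)\cong \bigl(A_n/zA_{n-2}\bigr)\oplus\ker\!\left(z\colon A_{n-1}\to A_{n+1}\right),
\end{equation*}
the second summand sitting in degree $n$ via $b\mapsto b\otimes y$. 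In degree $1$ this gives $H^1(C)\cong A_1$ (for $l\ge 1$, so that $z\ne 0$; the case $l=0$ is treated at the end), while Proposition~\ref{dimension} ($A_n=0$ for $n\ge m$) together with Poincar\'e duality for the closed orientable $m$--manifold $\Gamma\backslash G$ forces $\K\cong H^m(C)\cong A_{m-1}$, so that $A_{m-1}\cong\K$ and $A_{\ge m}=0$. One also checks, by the same exact-sequence bookkeeping and finiteness of $H^*(C)$, that $A$ is finite dimensional.

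The heart of the matter is to show that $A$ must be an exterior algebra on $A_1$. For this I would take a minimal model $(\bw W,d_W)\xrightarrow{\sim}A$, lift $[z]$ to a closed $\hat z\in(\bw W)_2$, and invoke Theorem~\ref{passage} to identify the minimal model of $C$ with that of the Hirsch extension $(\bw W\otimes\bw\langle y\rangle,\,d_W,\,dy=\hat z)$. Since this minimal model is $(\bw\la^*,d^{CE})$, it is generated in degree $1$, so the rational homotopy of $C$ is concentrated in degree $1$; comparing it with that of the base $(\bw W,d_W)$ through the long exact sequence attached to the Hirsch extension (which models a principal circle bundle with Euler class $[z]$) shows that $W$ has no generators in degree $\ge 3$ and at most one in degree $2$. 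Since $A$ has zero differential it is formal, and a finite-dimensional formal minimal Sullivan algebra which, after this reduction, is generated in degree $1$ must have vanishing differential --- the minimal-model incarnation of Hasegawa's theorem that a formal nilmanifold is a torus. Hence $(\bw W,d_W)=(\bw A_1,0)$ and $A\cong\bw A_1$. I expect this to be the genuinely delicate step: eliminating the possible degree-$2$ generator of $W$ and converting the formality of $A$ into the vanishing of $d_W$ is the only part that is not bookkeeping.

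Once $A\cong\bw A_1$, the model reads $C\cong(\bw A_1\otimes\bw\langle y\rangle,\,dy=z)$ with $z\in\bw^2 A_1$, and this is itself a minimal Sullivan algebra generated in degree $1$; by uniqueness of the minimal model it equals $(\bw\la^*,d^{CE})$, so $\la^*=A_1\oplus\K y$ (whence $\dim A_1=m-1$) and $d^{CE}$ is dual to the bracket determined by $z$. Finally I would put the alternating two-form $z$ into Darboux normal form $z=\sum_{i=1}^{r}\xi_{2i-1}\wedge\xi_{2i}$; the relations $z^l\ne 0$, $z^{l+1}=0$ force $r=l$, i.e.\ $z$ has rank $2l$. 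Reading off the dual basis identifies $\la$ with $\heisla(1,l)\oplus\abla_{m-2l-1}$, and since $G$ is $1$-connected nilpotent it is determined by $\la$, giving $G\cong\heis(1,l)\times\R^{m-2l-1}$. The index-zero case is the degenerate one: $z=0$ makes $C$ a CDGA with zero differential, so $\Gamma\backslash G$ is formal and hence a torus by Hasegawa, matching $\heis(1,0)\times\R^{m-1}=\R^m$.
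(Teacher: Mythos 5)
Your backward direction and the surrounding bookkeeping are fine (the computation $H^n(C)\cong (A_n/zA_{n-2})\oplus\ker(z\colon A_{n-1}\to A_{n+1})$, the identification $H^1(C)\cong A_1$ when $z\neq 0$, and $A_{\ge m}=0$ all check out), and your endgame --- Darboux normal form for $z$ and uniqueness of minimal models --- would indeed finish the argument \emph{if} the central claim were available. But that central claim, $A\cong\bw A_1$, is exactly what you do not prove: you reduce it to ``eliminating the possible degree-$2$ generator of $W$ and converting the formality of $A$ into the vanishing of $d_W$'', and then explicitly flag this as the delicate step without carrying it out. This is a genuine gap, not deferred bookkeeping. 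Note first that the long exact homotopy sequence you invoke concerns a fibration whose fiber and base are far from simply connected, so translating between Sullivan generators and dual homotopy groups already requires care beyond what you sketch. More seriously, even granting $W=W_1\oplus W_2$ with $\dim W_2\le 1$ and no higher generators, the case $W_2=\langle w\rangle\neq 0$ is not ruled out by anything you wrote: there the linear part of $\hat z$ in $W_2$ is nonzero, the pair $(w,y)$ cancels in the minimal model of the Hirsch extension, and you are left having to derive a contradiction from the formality of $\bw(W_1\oplus\langle w\rangle)$ and the finite-dimensionality of its cohomology $A$ --- and ``formal CDGA with zero differential'' certainly does not force freeness in general (think of $H^*(S^2)$). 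So the crux of your proof is missing, and it is not clear that your route to it is the shortest one even if it can be completed.

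The paper's proof shows that this entire detour is unnecessary: it never determines $A$ at all. Since $(\bw\la^*,d^{CE})$ is the minimal model (Theorem~\ref{hasegawa}), there is a \emph{direct} quasi-isomorphism $\psi\colon \bw\la^*\to A\otimes\bw\langle y\rangle$. Choosing a basis adapted to the nilpotent filtration so that $d^{CE}_{m-1}=0$, one gets $H^m(\bw\la^*)=\bw^m\la^*$, and hence (Lemma~\ref{first} in the paper) $\psi(\alpha_1)\cdots\psi(\alpha_m)\neq 0$ in $A_{m-1}y$, using Proposition~\ref{dimension}. This single non-vanishing statement does all the work: it makes $\psi|_{\la^*}$ injective, forces $b_1=m-1$ (so that $\psi|_{\la^*}\colon\la^*\to A_1\oplus A_0y$ is an isomorphism, with $\alpha_m\mapsto y$ after a change of basis), and, after writing $d^{CE}\alpha_m=\sum_{j=1}^{r}\beta_{2j-1}\wedge\beta_{2j}$, yields
\begin{equation*}
z^r=(dy)^r=\psi\bigl((d^{CE}\alpha_m)^r\bigr)=r!\,\psi(\beta_1)\cdots\psi(\beta_{2r})\neq 0,
\qquad z^{r+1}=\psi\bigl((d^{CE}\alpha_m)^{r+1}\bigr)=0,
\end{equation*}
so $r=l$ and $\la\cong\heisla(1,l)\oplus\abla_{m-2l-1}$. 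No minimal model of $A$, no homotopy exact sequence, and no structure theorem for $A$ enter anywhere; the only rational-homotopy input is Theorem~\ref{hasegawa} itself. If you want to salvage your plan you must actually prove $A\cong\bw A_1$ (which may well be true, but is a stronger statement than the theorem needs); the more efficient fix is to work directly with $\psi$ as the paper does.
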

\begin{proof}
We denote the dimension of $H^1(\Gamma\backslash G) \cong H^1(\bw \la^*)$
by $b_1$. We can choose a basis $\alpha_1$, \dots, $\alpha_m$ of $\la^*$ such
that $\alpha_1$, \dots, $\alpha_{b_1}$ is a basis of $\ker(d_1^{CE})$ and
\begin{equation}
\label{dec}
d_1^{CE}\alpha_k =  \sum_{i<j<k} \gamma^{ij}_k \alpha_i \wedge \alpha_{j}.
\end{equation}
Notice that since $d_0^{CE}=0$, we have $\ker(d_1^{CE}) = H^1(\bw \la^*)$.

Let $\left( A\otimes \bw \left\langle y \right\rangle, dy = z\right)$ be the almost formal
model of $\Gamma\backslash G$ of dimension $m$ and index $l$.
By Theorem~\ref{hasegawa} the CDGA  $\left(\bw \la^*, d^{CE}\right)$ is the minimal model of $\Gamma\backslash G$.
Therefore there is a quasi-isomorphism of CDGA's
\begin{equation*}
\psi \colon \bw \la^* \to A\otimes \bw \left\langle y \right\rangle.
\end{equation*}
By Proposition~\ref{dimension}, we have
$A_m=0$, and hence  $\left( A\otimes \bw \left\langle
y \right\rangle \right)_m = A_{m-1}y$.
We will use the following lemma several times in the rest of the  proof.
\begin{lemma}\label{first}
The element $\psi(\alpha_1)\phi(\alpha_2)\cdots \psi(\alpha_m)$ in $A_{m-1}y$ is non-zero. As
a consequence the elements $\psi(\alpha_1)$, \dots, $\psi(\alpha_m)$ in
$A_1$ are linearly independent. In particular, the restriction $ \psi|_{\la^*}$ is injective.
\end{lemma}
\begin{proof}[Proof of Lemma]
It follows from~\eqref{dec} that the Chevalley-Eilenberg differential
$$d_{m-1}\colon \bw^{m-1} \la^* \to \bw^m \la^*$$ is zero. Thus $H^m(\bw\la^*) =
\bw^m \la^*$. In particular, $\alpha_1\wedge\dots \wedge \alpha_m$ is a
non-zero element in $H^m(\bw\la^*)$. Since $\psi$ is a quasi-isomorphism we get
that $[\psi(\alpha_1\wedge \dots \wedge \alpha_m)]$ is a non-zero element in
the $m$th cohomology group of $A\otimes \bw \left\langle y \right\rangle$.
But then $\psi(\alpha_1\wedge \dots \wedge \alpha_m) =
\psi(\alpha_1)\psi(\alpha_2)\cdots \psi(\alpha_m)$ is a non-zero element of
$A_{m-1}y$. This proves the first claim of the lemma.

Now suppose there is a $j$ such that
\begin{equation*}
\psi(\alpha_j) = \sum_{i\not=j} a_i \psi(\alpha_i)
\end{equation*}
for some real numbers $a_i\in \R$. Then, since $\psi(\alpha_i)^2=0$ in
$A\otimes \bw \left\langle y \right\rangle$, we get that
$\psi(\alpha_1)\psi(\alpha_2)\cdots \psi(\alpha_m)=0$. Thus we got a
contradiction to the already proved fact. This shows that the elements
$\psi(\alpha_1)$, \dots, $\psi(\alpha_m)$ are linearly independent in $A_1$.
\end{proof}
Now we resume the proof of the theorem.
We will distinguish two cases: the first when $z=0$ and the second
when $z\not=0 $.

For $z=0$ we have $l=0$ and thus we need to show that $G\cong \R^{m}$ or
equivalently that $\la$ is an abelian Lie algebra. Notice that $\la$ is abelian
if and only if $d_1^{CE}$ is zero. Thus it is enough to check that  $\ker(d_1^{EC}) =
\la^*$.

  Since $z=0$ the differentials in the complex
$A\otimes \bw \left\langle y \right\rangle$  are zero. Therefore, its first
cohomology group coincides with its component of degree one
\begin{equation*}
\left( A\otimes \bw \left\langle y \right\rangle \right)_1 = A_1 \oplus A_0 y.
\end{equation*}
As $\psi$ is a quasi-isomorphism it induces the isomorphism
\begin{equation*}\label{asdf}
\begin{aligned}
  {[\psi]}\colon \ker(d_1^{CE}) = H^1(\bw\la^*)& \to A_1\oplus A_0 y\\
\alpha & \mapsto \psi(\alpha).
\end{aligned}
\end{equation*}
Thus we get the commutative diagram
\begin{equation*}
\xymatrix{
\ker\left( d_1^{CE} \right) \ar[r]^-{[\psi]}_-{\cong} \ar@{^{(}->}[d]  & A_1 \oplus A_0 y \\
{\ \ \la^*\ \ } \ar@{>->}[ru]_{\psi|_{\la^*}} &
}
\end{equation*}
where $\psi|_{\la^*}$ is injective by Lemma~\ref{first}.
Thus we get that the isomorphism $[\psi]$ is the composition of two
monomorphisms. But this is possible only if both of them are isomorphisms as
well. Therefore $\ker(d_1^{CE}) = \la^*$ as required.

Now we assume that $z\not=0$. In this case the first differential in $A\otimes
\bw \left\langle y \right\rangle$ is given by
\begin{equation*}
d_1(a_1+ a_0 y) = a_0 z,
\end{equation*}
where $a_0 \in A_0 = \R$. Thus $a_1 + a_0 y$ is in the kernel of $d_1$ if and
only if $a_0=0$. Moreover $d_0=0$ in $A\otimes \bw \left\langle y
\right\rangle$. Thus we get
\begin{equation*}
H^1\left(A\otimes \bw \left\langle y \right\rangle\right) = A_1.
\end{equation*}
As $\psi$ is a quasi-isomorphism the induced map
\begin{equation*}\label{adf}
\begin{aligned}
  {[\psi]}\colon \ker(d_1^{CE}) = H^1\left(\bw\la^*\right)& \to A_1\\
\alpha & \mapsto \psi(\alpha).
\end{aligned}
\end{equation*}
is an isomorphism of vector spaces.
It follows from Lemma~\ref{first} that $\psi(\alpha_1)$, \dots,
$\psi(\alpha_{b_1})$ is a basis of $A_1$.

Now we will show that $b_1=m-1$. Suppose $b_1=m$. Then
\begin{equation*}
\psi(\alpha_1) \psi(\alpha_2) \dots \psi(\alpha_m) \in A_m =0
\end{equation*}
and thus the above product must be zero, which contradicts to Lemma~\ref{first}.
Now assume $b_1\le m-2$. Then
\begin{equation*}
 \dim \la^*=m > b_1 +1 = \dim A_1 + 1 = \dim (A_1 \oplus A_0 y).
\end{equation*}
But this is impossible as $\psi|_{\la^*}\colon \la^* \to A_1 \oplus A_0y$ is
a monomorphism by Lemma~\ref{first}.
Thus $b_1=m-1$ and
\begin{equation*}
 \dim \la^*=m = b_1 +1 =\dim (A_1 \oplus A_0 y).
\end{equation*}
This equality together with Lemma~\ref{first} imply that $\psi|_{\la^*}$ is an
isomorphism.
Now without loss of generality we can assume that $\alpha_m
=(\psi|_{\la^*})^{-1} (y) $.
Further there is a basis $\beta_1$, \dots, $\beta_{m-1}$ of $\ker(d_1^{CE})$
such that
\begin{equation*}
d_1^{CE} \alpha_m = \sum_{j=1}^{r} \beta_{2j-1} \wedge \beta_{2j}
\end{equation*}
for some natural number $r\le \left\lfloor \frac{m-1}2 \right\rfloor$.
Now to prove that $\la \cong \heisla(1,l) \oplus \abla_{m-2l-1} $ it remains
to show
that $r=l$.
We have
\begin{equation*}
z^r = (dy)^r = (d\psi(\alpha_m))^r = \psi \left( (d_1^{CE}\alpha_m)^r \right) =
r! \psi(\beta_1) \dots \psi(\beta_{2r}) \not=0
\end{equation*}
by Lemma~\ref{first}. Further
\begin{equation*}
z^{r+1} = \psi\left( (d_1^{CE}\alpha_m)^{r+1} \right) =0.
\end{equation*}
Thus $r=l$ as claimed and this finishes the proof.
\end{proof}
\begin{remark}
Note, that the above theorem can be also restated in more topological terms.
Namely,  an $m$-dimensional aspherical nilpotent manifold $M$ admits an
almost formal model of dimension $m$ and index $l$ if and only if
\begin{equation*}
\pi_1(M) \otimes \R \cong \heis(1,l) \times \R^{m-2l-1}.
\end{equation*}
\end{remark}
{Next}, using Theorem~\ref{heisenberg} we are able to classify quasi-Sasakian
and quasi-Vaisman compact nilmanifolds.
\begin{theorem}\label{classification_qS}
The $(2n+1)$-dimensional compact nilmanifold $\Gamma\backslash G$ admits a quasi-Sasakian structure
of index $l$
if and only if $G$ and $\heis(1,l)\times \R^{2(n-l)}$ are isomorphic as Lie
groups. \end{theorem}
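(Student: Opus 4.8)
The plan is to establish the two implications separately: the ``only if'' part is an application of Theorem~\ref{heisenberg}, while the ``if'' part relies on the explicit quasi-Sasakian structure exhibited in Section~\ref{quasi-Sasakian}. For the ``only if'' direction, suppose $\Gamma\backslash G$ carries a quasi-Sasakian structure of index $l$, and set $m:=\dim(\Gamma\backslash G)=2n+1$. By Theorem~\ref{model_qS} the CDGA $\left(H^*_B(M,\xi)\otimes\bw\langle y\rangle,\,dy=[d\eta]_B\right)$ is a model of $\Gamma\backslash G$, and, as observed right after the definition of the index, this model is almost formal of index $l$ (with $A=H^*_B(M,\xi)$ and $z=[d\eta]_B$). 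Theorem~\ref{heisenberg} then forces $G\cong\heis(1,l)\times\R^{m-2l-1}$, and since $m-2l-1=2(n-l)$ this is exactly the asserted isomorphism. Once Theorem~\ref{heisenberg} is in place, this direction is essentially bookkeeping.

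For the ``if'' direction, assume $G\cong\heis(1,l)\times\R^{2(n-l)}$. Transporting along this isomorphism the left-invariant quasi-Sasakian structure $(\varphi,\xi,\eta,h)$ constructed in Section~\ref{quasi-Sasakian}, we obtain a quasi-Sasakian structure on $\Gamma\backslash G$, and it only remains to check that its index is exactly $l$. Here $\eta=\alpha_{2l+1}$ and $d\eta=-\sum_{j=1}^{l}\alpha_j\wedge\alpha_{l+j}$, so that already at the level of forms one has $(d\eta)^{l}=\pm\,l!\,\alpha_1\wedge\cdots\wedge\alpha_{2l}\neq0$ while $(d\eta)^{l+1}=0$. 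I would then promote this to basic cohomology: the $1$-forms $\alpha_1,\dots,\alpha_{2l},\beta_1,\dots,\beta_{2(n-l)}$ are closed and $\xi$-basic, and they generate precisely the zero-differential subalgebra $A$ appearing in the almost formal model of this nilmanifold in Section~\ref{nilmanifolds}, inside which $\alpha_1\wedge\cdots\wedge\alpha_{2l}$ is non-zero. Hence $[d\eta]^l_B\neq0$ and $[d\eta]^{l+1}_B=0$, so the index equals $l$.

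The main obstacle is precisely this last index computation: one must rule out that the constructed structure has index strictly smaller than $l$, which amounts to controlling basic cohomology rather than merely the forms $(d\eta)^k$. A cleaner, purely formal alternative sidesteps this entirely. The structure of Section~\ref{quasi-Sasakian} has \emph{some} index $l'$, so the already-proved forward implication yields $G\cong\heis(1,l')\times\R^{2(n-l')}$; comparing with $G\cong\heis(1,l)\times\R^{2(n-l)}$ and using that the dimension of the centre of $\heisla(1,l')\oplus\abla_{2(n-l')}$, which equals $1+2(n-l')$, is a Lie-algebra invariant forces $l'=l$. This route avoids the explicit basic-cohomology argument at the cost of invoking the forward direction, and I would likely present it as the primary proof of the converse.
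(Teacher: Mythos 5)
Your proposal is correct, and its skeleton coincides with the paper's: the ``only if'' direction is exactly the paper's argument (Theorem~\ref{model_qS} produces an almost formal model of index $l$, and Theorem~\ref{heisenberg} then identifies $G$), and the ``if'' direction uses the explicit left-invariant structure of Section~\ref{quasi-Sasakian}. Where you genuinely differ is in verifying that this structure has index exactly $l$ --- a point the paper disposes of with ``it is easy to prove.'' Your direct attempt has the gap you yourself flag: non-vanishing of $(d\eta)^l$ as a form, or as an element of the zero-differential subalgebra $A\subset\bw\la^*$, does not by itself give $[d\eta]_B^l\neq 0$, since $(d\eta)^l$ could a priori be the differential of a basic form. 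A complete direct proof needs a Stokes-type argument as in Remark~\ref{index-Vaisman}: if $(d\eta)^l=d\mu$ with $\mu$ basic, then $\eta\wedge\beta_1\wedge\cdots\wedge\beta_{2(n-l)}\wedge(d\eta)^l$ differs from an exact form by $\pm\, d\eta\wedge\beta_1\wedge\cdots\wedge\beta_{2(n-l)}\wedge\mu$, which vanishes because it is a basic form of top degree; but the former is a non-zero constant multiple of the volume form of the compact manifold $\Gamma\backslash G$, a contradiction. Your bootstrap, which you rightly promote to the primary proof, is complete and non-circular: the constructed structure has some index $l'$, the already-proved forward implication yields $G\cong\heis(1,l')\times\R^{2(n-l')}$, and comparing dimensions of centres ($1+2(n-l')$ versus $1+2(n-l)$) forces $l'=l$. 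The bootstrap buys you a proof with no cohomological computation at all; its cost is that it is less informative --- it gives no intrinsic handle on $[d\eta]_B$, which is what the direct volume-form technique provides and what matters for the rank/index questions raised at the end of the paper.
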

\begin{proof}
In Section~\ref{quasi-Sasakian},
it was explained how to construct a quasi-Sasakian structure on
\begin{equation*}
\Gamma\backslash\left(\heis(1,l)\times \R^{2(n-l)}\right)
\end{equation*}
for any cocompact subgroup $\Gamma$ of $\heis(1,l)\times \R^{2(n-l)}$.  It is easy to prove that this structure has index $l$.

Now, suppose $M:=\Gamma\backslash G$ is a nilmanifold that admits a quasi-Sasakian structure
such that $[d\eta]_B^l\not=0$ and $[d\eta]_B^{l+1} = 0$. Then the almost formal
model \eqref{eq:model_qS} is of  index $l$. Therefore by
Theorem~\ref{heisenberg}, we have that $G$ and $H(1,l)\times \R^{2(n-l)}$ are
isomorphic.
\end{proof}
\begin{theorem}\label{classification_pV}
The $(2n+2)$-dimensional compact nilmanifold $\Gamma\backslash G$ admits a quasi-Vaisman structure
if and only if $G$ is isomorphic to $\heis(1,l)\times \R^{2(n-l)+1}$ as a Lie
group. Moreover, in this case,
\begin{equation*}
[d\eta]_B^l\not=0,\quad [d\eta]_B^{l+1}=0.
\end{equation*}
\end{theorem}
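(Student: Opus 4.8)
The plan is to follow the same two-step strategy used in the proof of Theorem~\ref{classification_qS}, replacing the rank-one Reeb foliation by the rank-two foliation $\fol$ generated by the quasi Lee and anti-Lee vector fields. The two heavy tools, namely the model supplied by Theorem~\ref{model_pV} and the classification of almost formal nilmanifolds in Theorem~\ref{heisenberg}, are already at our disposal, so the argument should reduce to assembling them correctly rather than to any new computation.

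For the direction showing that $G\cong\heis(1,l)\times\R^{2(n-l)+1}$ suffices, I would invoke the explicit construction carried out in Section~\ref{quasi-Vaisman}, where a left-invariant quasi-Vaisman structure is produced on the nilpotent group $\heis(1,l)\times\R^{2(n-l)+1}$. Since $\dim\bigl(\heis(1,l)\times\R^{2(n-l)+1}\bigr)=(2l+1)+(2(n-l)+1)=2n+2$, this group has the required dimension, and the structure descends to any compact quotient $\Gamma\backslash G$ by a cocompact discrete subgroup $\Gamma$. This yields the existence half of the equivalence.

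For the converse, suppose $\Gamma\backslash G$ carries a quasi-Vaisman structure, and let $l$ be its index, so that $[d\eta]_B^l\neq0$ and $[d\eta]_B^{l+1}=0$. By Theorem~\ref{model_pV} together with the observation following it, the CDGA~\eqref{eq:model_pV} is a model of $\Gamma\backslash G$ and is almost formal of index $l$. Writing $m=2n+2$ for the dimension, Theorem~\ref{heisenberg} then forces $G\cong\heis(1,l)\times\R^{m-2l-1}=\heis(1,l)\times\R^{2(n-l)+1}$, which is exactly the desired conclusion.

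The ``Moreover'' clause is then immediate from the way the converse is organized: the integer $l$ appearing in the isomorphism $G\cong\heis(1,l)\times\R^{2(n-l)+1}$ is introduced precisely as the index of the given quasi-Vaisman structure, so the relations $[d\eta]_B^l\neq0$ and $[d\eta]_B^{l+1}=0$ hold by definition. The only point I would flag as needing a word of justification is that this $l$ is unambiguous, i.e.\ that the Heisenberg rank is a genuine isomorphism invariant of $G$ and hence cannot depend on the particular quasi-Vaisman structure chosen; granting this, there is no further obstacle, since all the substantive work is already contained in Theorems~\ref{model_pV} and~\ref{heisenberg}.
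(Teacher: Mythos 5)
Your proposal is correct and follows essentially the same route as the paper's own proof: existence via the left-invariant construction of Section~\ref{quasi-Vaisman}, and the converse by combining the almost formal model of index $l$ from Theorem~\ref{model_pV} with the classification in Theorem~\ref{heisenberg}. The paper is equally brief on the ``Moreover'' clause, leaving implicit exactly the point you flag, namely that $l$ is an isomorphism invariant of $G$ (which follows, e.g., by comparing the dimensions of the centers of $\heisla(1,l)\oplus\abla_{2(n-l)+1}$ for different $l$), so no genuine gap remains.
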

\begin{proof}
In Section~\ref{quasi-Vaisman}
it was shown that every nilmanifold modelled on $H(1,l)\times \R^{2(n-l)+1}$
admits a quasi-Vaisman structure. It is easy to check that
this structure has index $l$.

Now, suppose $M=\Gamma\backslash G$ admits a quasi-Vaisman structure of index $l$. Then
by Theorem~\ref{model_pV} it has an almost formal model of index $l$. Applying Theorem~\ref{heisenberg}, we get that $G$ and $H(1,l)\times
\R^{2(n-l)+1}$ are isomorphic.
\end{proof}

Note that using Remark \ref{index-Sasakian} and Theorem \ref{classification_qS}, we directly deduce a result which was initially proved in \cite{nilsasakian}.
\begin{corollary}
The $(2n+1)$-dimensional compact nilmanifold $\Gamma\backslash G$ admits a Sasakian structure
if and only if $G$ and $\heis(1,n)$ are isomorphic as Lie
groups.
\end{corollary}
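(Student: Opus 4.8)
The plan is to obtain both implications as immediate consequences of Theorem~\ref{classification_qS} and Remark~\ref{index-Sasakian}, the key observation being that a Sasakian structure is precisely a quasi-Sasakian structure of maximal index.

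First I would handle the forward implication. Assume $\Gamma\backslash G$ admits a Sasakian structure $(\varphi,\xi,\eta,h)$. By definition this structure is normal with $d\eta=\Phi$, and hence $d\Phi=d(d\eta)=0$; thus it is in particular quasi-Sasakian. By Remark~\ref{index-Sasakian}, the index of a compact Sasakian manifold of dimension $2n+1$ is maximal, so $l=n$. Substituting $l=n$ into Theorem~\ref{classification_qS} yields that $G$ is isomorphic to $\heis(1,n)\times\R^{2(n-n)}=\heis(1,n)$, as required.

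For the converse I would invoke the explicit construction carried out in Section~\ref{quasi-Sasakian}. If $G\cong\heis(1,n)$, then the left-invariant quasi-Sasakian structure described there on $\heis(1,l)\times\R^{2(n-l)}$ specializes, when $l=n$ and the Euclidean factor disappears, to a structure satisfying $d\eta=\Phi$; as already noted after that example, such a structure is genuinely Sasakian. Passing to the quotient by the cocompact subgroup $\Gamma$ then produces a Sasakian structure on $\Gamma\backslash G$.

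Since every step rests on results already proved, I expect no genuine obstacle here; the only subtlety is to match the maximality of the index with the Sasakian condition $d\eta=\Phi$, which is exactly what Remark~\ref{index-Sasakian} together with the worked example provide.
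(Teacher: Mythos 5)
Your proposal is correct and follows essentially the same route as the paper, which deduces the corollary directly from Remark~\ref{index-Sasakian} and Theorem~\ref{classification_qS}. If anything, you are slightly more careful than the paper on the converse direction: you correctly note that Theorem~\ref{classification_qS} alone only yields a quasi-Sasakian structure of index $n$, and that one must invoke the explicit construction of Section~\ref{quasi-Sasakian} (and the remark there that the case $l=n$ gives $d\eta=\Phi$) to conclude the structure is genuinely Sasakian.
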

Finally, using Remark \ref{index-Vaisman} and Theorem \ref{classification_pV}, we directly deduce another result which has been proved recently in \cite{bazzoni}.
\begin{corollary}
The $(2n+2)$-dimensional compact nilmanifold $\Gamma\backslash G$ admits a Vaisman structure
if and only if $G$ and $\heis(1,n)\times \R$ are isomorphic as Lie
groups.
\end{corollary}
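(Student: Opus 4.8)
The plan is to obtain this corollary as a formal consequence of the classification of quasi-Vaisman nilmanifolds in Theorem~\ref{classification_pV}, combined with the index computation for compact Vaisman manifolds recorded in Remark~\ref{index-Vaisman}; no genuinely new argument should be needed, only the observation that a Vaisman structure is a quasi-Vaisman structure of maximal index.

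For the ``only if'' direction, I would first note that a compact Vaisman manifold carries in particular a quasi-Vaisman structure. Indeed, after normalizing the Lee $1$-form $\theta$ to have unit norm, which costs no generality as recalled in Section~\ref{quasi-Vaisman}, one has $d\Omega = d\eta\wedge\theta$ with $\theta$ closed and its metric dual $U$ unitary, Killing, and satisfying $\lie_U J = 0$; these are precisely the defining conditions of a quasi-Vaisman structure. Hence $\Gamma\backslash G$ admits a quasi-Vaisman structure, and Theorem~\ref{classification_pV} forces $G\cong\heis(1,l)\times\R^{2(n-l)+1}$ for some $l$, with $[d\eta]_B^l\neq 0$ and $[d\eta]_B^{l+1}=0$, that is, of index $l$. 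On the other hand, Remark~\ref{index-Vaisman} shows that the index of a compact Vaisman manifold of dimension $2n+2$ is maximal and equal to $n$. Comparing the two values yields $l=n$, and substituting back gives $G\cong\heis(1,n)\times\R$.

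For the ``if'' direction, I would invoke the explicit construction of Section~\ref{quasi-Vaisman}: the group $\heis(1,l)\times\R^{2(n-l)+1}$ carries a left-invariant quasi-Vaisman structure, which descends to any compact quotient $\Gamma\backslash G$. Specializing to $l=n$ the group is exactly $\heis(1,n)\times\R$, and it was observed there that in the case $n=l$ this quasi-Vaisman structure is in fact Vaisman. Thus $\Gamma\backslash G$ admits a Vaisman structure, completing the equivalence.

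The only delicate point is confirming that in the specialized case $l=n$ the explicit structure is genuinely Vaisman rather than merely quasi-Vaisman. This reduces to verifying the locally conformally symplectic condition of first kind $\Omega = d\eta + \theta\wedge\eta$, which is exactly the criterion stated immediately after the definition of quasi-Vaisman manifolds and is already checked in the construction of Section~\ref{quasi-Vaisman}. Everything else is a straightforward combination of previously established statements, so I expect the corollary to follow in a few lines.
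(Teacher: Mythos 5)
Your proof is correct and takes essentially the same route as the paper: the paper deduces this corollary in one line by combining Remark~\ref{index-Vaisman} (a compact Vaisman manifold of dimension $2n+2$ has maximal index $n$) with Theorem~\ref{classification_pV}, exactly the comparison you carry out, with the ``if'' direction supplied by the explicit construction of Section~\ref{quasi-Vaisman} and the observation made there that for $l=n$ the structure is genuinely Vaisman. Your expanded version, including the normalization of $\theta$ and the verification via the l.c.s.\ first-kind criterion, fills in the same ingredients the paper implicitly relies on.
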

\section{Mapping torus and solvmanifolds}

Let $M$ be a compact Riemannian manifold with Riemannian metric $h$ and $f: M \to M$ be an isometry. Suppose that $a$ is a positive constant and consider the mapping torus
\[
M_{(f, a)} = (M \times \mathbb{R}) / \rho_{(f, a)},
\]
with $\rho_{(f, a)}: \mathbb{Z} \times (M \times \mathbb{R}) \to M \times \mathbb{R}$ the action of the discrete subgroup $\mathbb{Z}$ defined by
\[
\rho_{(f, a)}(k, (x, t)) = (f^k (x), t + ak).
\]
Let $dt \otimes dt$ be the standard flat metric on $\mathbb{R}$ and $g$ the product metric on $M \times \mathbb{R}$
\[
g = h + dt \otimes dt.
\]
Then, it is clear that the $\mathbb{Z}$-action is isometric. So, $g$ induces a Riemannian metric on the mapping torus $M_{(f, a)}$.

Now, denote by $U$ the vector field on $M_{(f, a)}$ induced by the $\rho_{(f, a)}$-invariant vector field $\frac{\partial}{\partial t}$ on $M \times \mathbb{R}$.
Since the vector field $U$ is unitary and parallel, the following theorem is a
direct consequence of~\cite[Corollary~3.2]{hltvaisman}.
\begin{theorem}\label{theorem-appendix}
Let $(M, h)$ be a compact Riemannian manifold and $H^k(M_{(f, a)})$ the de Rham cohomology group of order $k$ of the mapping torus of $M$ by the isometry $f: M \to M$ and the positive constant $a$. Then,
\[
H^k(M_{(f, a)}) \simeq H^k_U(M_{(f, a)}) \oplus H^{k-1}_U(M_{(f, a)}),
\]
where $H^*_U(M_{(f, a)})$ is the basic cohomology of $M_{(f, a)}$ with respect to the foliation generated by the vector field $U$ induced by the invariant vector field $\frac{\partial}{\partial t}$ on $M \times \mathbb{R}$. Moreover,
\[
H^k_U(M_{(f, a)}) \simeq \left\{ \alpha \in \Omega^k(M) \,\middle|\, \alpha
\mbox{ is h-harmonic and } f^*\alpha = \alpha \right\}.
\]
\end{theorem}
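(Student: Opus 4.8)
The plan is to reduce the whole statement to \cite[Corollary~3.2]{hltvaisman} after checking its hypotheses, and then to translate the intrinsic objects on $M_{(f,a)}$ into data on $M$. First I would confirm that $U$ is unitary and parallel. On the $\Z$-cover $M\times\R$ the field $\ddt$ has unit $g$-norm and is parallel for the Levi-Civita connection of $g$, because $g=h+dt\otimes dt$ is a Riemannian product and the $\R$-factor is flat; the deck transformation $\rho_{(f,a)}$ is a $g$-isometry sending $\ddt$ to itself, so these two properties pass to the quotient field $U$. With the hypotheses verified, \cite[Corollary~3.2]{hltvaisman} applies directly and produces the splitting
\[
H^k(M_{(f,a)})\simeq H^k_U(M_{(f,a)})\oplus H^{k-1}_U(M_{(f,a)}),
\]
together with an identification of $H^k_U(M_{(f,a)})$ with the space of basic $g$-harmonic $k$-forms, i.e. basic forms killed by the basic Laplacian $\Delta_b=d\delta_b+\delta_b d$.

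Next I would make the basic objects explicit by descending from $M\times\R$. A form is basic for the foliation generated by $U$ exactly when its pullback $\omega$ satisfies $i_{\ddt}\omega=0$ and $\lie_{\ddt}\omega=0$; the first condition removes any $dt$-component and, with the second, forces $\omega$ to be $t$-independent, so basic forms are precisely the pullbacks $\pr_M^*\alpha$ with $\alpha\in\Omega^k(M)$. Since $\pr_M\circ\rho_{(f,a)}=f\circ\pr_M$, such a form descends to $M_{(f,a)}$ iff $f^*\alpha=\alpha$. Under this dictionary the basic differential is the de Rham differential of $M$, and because the flow of $U$ is totally geodesic (so the foliation is minimal) and $g$ restricts to $h$ on the $M$-factor, the basic codifferential $\delta_b$ coincides with the codifferential $\delta$ of $h$; hence $\Delta_b$ corresponds to the Hodge Laplacian of $h$, and a basic $g$-harmonic form corresponds to an $f$-invariant $h$-harmonic form. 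Combined with the identification from the corollary this yields the second isomorphism
\[
H^k_U(M_{(f,a)})\simeq\{\,\alpha\in\Omega^k(M)\mid \alpha\ \text{is}\ h\text{-harmonic and}\ f^*\alpha=\alpha\,\}.
\]

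The delicate point is precisely why basic cohomology equals basic harmonic forms in this $f$-invariant setting, since $f$ may have infinite order and no averaging over $\langle f\rangle$ is available to produce invariant primitives. I would sidestep averaging by invoking Hodge theory on $M$: as $f$ is an isometry, $f^*$ commutes with $\Delta$, $d$, $\delta$ and the Green operator $G$, so the decomposition $\Omega^k(M)=\mathcal H^k\oplus\im d\oplus\im\delta$ is $f^*$-equivariant. Any $f$-invariant exact form $\omega$ then has the canonical primitive $\delta G\omega$, which is $f$-invariant and satisfies $\omega=d(\delta G\omega)$ because $dG\omega=Gd\omega=0$. Therefore the invariant complex $(\Omega^*(M)^f,d)$ has cohomology $(\ker d)^f/d(\Omega^{k-1}(M)^f)=(\mathcal H^k)^f$, matching the right-hand side and giving an independent proof of the second isomorphism that does not rely on basic Hodge theory. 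This invariance of the canonical Hodge primitive is, to my mind, the only non-routine ingredient; everything else is bookkeeping on $M\times\R$.
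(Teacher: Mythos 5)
Your proposal is correct, and its skeleton is exactly the paper's: the paper's entire proof consists of observing that $U$ is unitary and parallel and then declaring the theorem a direct consequence of \cite[Corollary~3.2]{hltvaisman}. Where you go beyond the paper is in making explicit what that citation silently absorbs: (i) the dictionary identifying basic forms on $M_{(f,a)}$ with the $f$-invariant complex $\left(\Omega^*(M)^f, d\right)$, via $\rho$-invariant, $t$-independent, $dt$-free forms on $M\times\R$; and (ii) a self-contained proof of the second isomorphism $H^k_U(M_{(f,a)})\simeq (\mathcal H^k)^f$ using the $f^*$-equivariance of the Hodge decomposition and the canonical primitive $\delta G\omega$, which correctly sidesteps the unavailability of averaging when $f$ has infinite order. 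This second point is the genuine added value: the paper obtains the harmonic-form description only as part of the black-box corollary, whereas your Green-operator argument proves it from ordinary Hodge theory on $M$, so your write-up is self-contained except for the splitting $H^k\simeq H^k_U\oplus H^{k-1}_U$, for which you (like the paper) still rely on the cited corollary. The only step you should phrase with a little more care is the claim that the basic codifferential coincides with the $h$-codifferential; in this local-product situation it is true and elementary, but it is also unnecessary, since your equivariant-Hodge route to the second isomorphism never uses basic Hodge theory at all.
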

\subsection{Mapping torus and quasi-Vaisman manifolds.}
\label{mappingtorusquasivaisman}
It is well known that if $M$ is a Sasakian manifold
then a mapping torus of $M$ with respect to any Sasakian automorphism
can be endowed with a Vaisman structure.

Now, we show that a similar relation holds between quasi-Sasakian and quasi-Vaisman
manifolds.
Let $(M,{\varphi},\xi,\eta,h)$ be a quasi-Sasakian manifold.
We start by constructing a quasi-Vaisman structure on $M\times \R$.
Write $dt$ for the volume form on $\R$.
Define the metric on $M\times \R$ by
\begin{equation*}
g = h + dt\otimes dt
\end{equation*}
and the complex structure $J$ by
\begin{equation*}
J = {\varphi} - \frac{\partial}{\partial t} \otimes \eta + \xi \otimes dt,
\end{equation*}
that is
\begin{equation*}
J\left(X,a\frac{\partial}{\partial t}\right) = \left({\varphi} X + a \xi, - \eta(X)
\frac{\partial}{\partial t}\right).
\end{equation*}
\begin{proposition}
\label{qV}
The manifold $(M\times \R,J,g,dt)$ with $J$ and $g$ defined above is a
quasi-Vaisman manifold. Also the manifold $ M\times S^1 \cong \left( M\times \R
\right)/\Z$ inherits
a quasi-Vaisman structure from $M\times \R$.
\end{proposition}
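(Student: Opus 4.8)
The plan is to check, one at a time, the conditions in the definition of a quasi-Vaisman structure for the triple $(J,g,\theta)$ with $\theta = dt$. The form $dt$ is obviously closed, and its metric dual with respect to $g = h + dt\otimes dt$ is $U = \ddt$, which has unit length. So the work consists in verifying that $(J,g)$ is Hermitian, that $U$ is Killing and satisfies $\lie_U J = 0$, and that $d\Omega = d\eta\wedge\theta$, where $\eta = -\theta\circ J$ and $\Omega$ is the fundamental $2$-form.

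First I would record the pointwise algebra. Writing a tangent vector to $M\times\R$ as $(X,a\ddt)$, the definition gives $J(X,a\ddt) = (\varphi X + a\xi,\, -\eta(X)\ddt)$. Using $\varphi^2 = -\id + \eta\otimes\xi$, $\varphi\xi = 0$, $\eta\circ\varphi = 0$ and $\eta(\xi)=1$, a direct computation yields $J^2 = -\id$; likewise the compatibility $h\circ(\varphi\otimes\varphi) = h - \eta\otimes\eta$ together with $h(\cdot,\xi) = \eta$ gives $g(J\cdot,J\cdot) = g$, so $(J,g)$ is almost Hermitian. I would also note that $-dt\circ J = \eta$ (the pullback of the contact form), so the quasi-anti-Lee $1$-form of the constructed structure is precisely $\eta$, which keeps the notation consistent.

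The main obstacle is the integrability of $J$, i.e.\ $N_J = 0$. This is exactly where the normality of the quasi-Sasakian structure, $N_\varphi + d\eta\otimes\xi = 0$, must enter, and it is the classical fact that the almost complex structure on the product of a normal almost contact manifold with $\R$ is integrable. Concretely, I would expand $N_J$ on fields pulled back from $M$ together with $\ddt$ (using that such fields commute with $\ddt$), split the result into its $TM$- and $T\R$-components, and identify these with $N_\varphi(X,Y)$ and $d\eta(X,Y)$; the normality condition then cancels every term. Alternatively the vanishing of $N_J$ can simply be quoted from the standard theory of normal almost contact structures.

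For the two remaining conditions I would argue by $t$-invariance: since $\varphi$, $\xi$, $\eta$, $dt$ and $\ddt$ are all invariant under translation in the $\R$-factor, both $\lie_U g = 0$ (so $U$ is Killing, of constant unit length) and $\lie_U J = 0$ hold. Finally, a short computation gives $\Omega = \Phi + \eta\wedge dt$, where $\Phi$ is the fundamental $2$-form of the quasi-Sasakian structure; hence $d\Omega = d\Phi + d\eta\wedge dt = d\eta\wedge\theta$, using $d\Phi = 0$. This establishes the quasi-Vaisman structure on $M\times\R$. For $M\times S^1\cong(M\times\R)/\Z$, the deck transformations are translations in the $\R$-factor, which preserve $J$, $g$ and $dt$ by the same $t$-invariance, so the quasi-Vaisman structure descends to the quotient.
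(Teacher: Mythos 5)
Your proof is correct and follows essentially the same route as the paper: Hermitian-ness of $(J,g)$ from normality of $(\varphi,\xi,\eta,h)$ (the paper simply cites Blair, as your alternative suggests), the computation $\Omega = \Phi + \eta\wedge dt$ with $d\Phi=0$ giving $d\Omega = d\eta\wedge\theta$, and $t$-invariance for $U$ being Killing with $\lie_U J=0$. Your explicit treatment of the descent to $M\times S^1$ is a small addition the paper leaves implicit, but the argument is the same.
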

\begin{proof}
 As $({\varphi},\xi,\eta,h)$ is a normal almost contact metric structure,
we have that  $\left( J,g \right)$ is Hermitian (see
\cite[Section~6.1]{blair2}).
Moreover, its fundamental $2$-form is given by $\Omega := g \circ( \id \otimes
J)$.
As  $dt\circ J = -\eta$,
we get
\begin{equation*}
g\circ (\id \otimes J) = \Phi + \eta  \otimes dt - dt \otimes \eta,
\end{equation*}
 where $\Phi$ is the fundamental $2$-form of the almost contact metric structure $(\varphi, \xi, \eta, h)$.
Thus
\begin{equation*}
\Omega = \Phi + \eta \otimes dt - dt \otimes \eta =   \Phi +  \eta\wedge dt
\end{equation*}
and
\begin{equation*}
d\Omega = d\eta \wedge dt .
\end{equation*}
It is clear that $U=\frac{\partial}{\partial t}$  is parallel unit vector field on
$M\times \R$.
It is left to show that $\lie_U J = \lie_{\frac{\partial}{\partial t}} J=0$.
Now, we have
\begin{equation*}
\begin{aligned}
(\lie_{U} J) \left( X , aU\right) = \lie_U \left( \varphi X + a\xi, -\eta(X) U
\right) - J \left( \lie_U \left( X , aU \right) \right) = 0,
\end{aligned}
\end{equation*}
where $X\in \vf\left( M \right)$ and $a\in \R$. This proves that $\lie_U J = 0$.
\end{proof}
Now let
 $f\colon M\to M$ be  a
diffeomorphism that preserves the almost contact metric structure.
For every positive constant $a\in \R$, we consider the mapping torus $M_{(f, a)}$ of $M$ by $f$ and $a$, that is,
\[
M_{(f, a)} = (M \times \mathbb{R}) /\Z,
\]
where the action $\rho$ of the discrete subgroup $\Z$ on $M \times \mathbb{R}$ is given by
\[
\rho(k, (x, t)) = (f^k(x), t + k a).
\]
The above action preserves the quasi-Vaisman structure on $M\times
\R$. Therefore the quotient compact smooth manifold
\begin{equation*}
M_{(f,a)} = \left( M\times \R \right)/\Z
\end{equation*}
is quasi-Vaisman.


On the other hand, let $(M,J,g,\theta)$ be a quasi-Vaisman manifold. Since $\theta$ is a closed
form, the foliation $\ker \theta = \left\langle U \right\rangle^\perp$ is
integrable.   Let $L$ be a leaf of this foliation.
Notice that if $M$ is a mapping torus of a quasi-Sasakian manifold $N$, then 
$L$ is isometric to $N$ and thus it is quasi-Sasakian. We are going to show that
this holds also for a general quasi-Vaisman manifold.

Define an almost contact structure
$(\phi,\xi,\eta,g)$ on $L$ where $\xi=V$, $\eta$  and $g$ are given by
restricting from $M$, and
\begin{equation*}
\phi \xi :=0,\quad \phi X = JX, \mbox{ for } X \in \left\langle U,V
\right\rangle^\perp.
\end{equation*}
\begin{proposition}
\label{leafqV}
The almost contact structure $(\phi,\xi,\eta,g)$ on $L$ is quasi-Sasakian.
\end{proposition}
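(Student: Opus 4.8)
The plan is to verify directly the three defining properties of a quasi-Sasakian structure: that $(\phi,\xi,\eta,g)$ is an almost contact metric structure, that its fundamental $2$-form is closed, and that it is normal. Throughout I will use the orthogonal splitting $TL = \langle V\rangle \oplus W$, where $W := \langle U,V\rangle^\perp$, together with the facts already established for quasi-Vaisman manifolds that $JU=V$, $JV=-U$, that $W$ is $J$-invariant, and that $V=\xi$ is tangent to $L$ since $\theta(V)=0$.

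First I would record the key pointwise identity $JZ = \phi Z - \eta(Z)\,U$ for every $Z\in TL$, which follows by decomposing $Z = Z_W + \eta(Z)V$ with $Z_W\in W$ and using $\phi Z_W = JZ_W$, $\phi V = 0$, $JV=-U$. The almost contact metric axioms $\phi^2 = -\id + \eta\otimes\xi$, $\eta(\xi)=1$ and $g(\phi X,\phi Y)=g(X,Y)-\eta(X)\eta(Y)$ are then immediate from $J^2=-\id$, the fact that $J$ is a $g$-isometry, and $\eta(X)=g(V,X)=0$ for $X\in W$. For the closedness of the fundamental $2$-form $\Phi(X,Y)=g(X,\phi Y)$, I would show that $\Phi$ equals the pullback of $\Omega$ to $L$: indeed $g(X,JY)=g(X,\phi Y)-\eta(Y)\theta(X)$ and $\theta(X)=0$ for $X\in TL=\ker\theta$, so that $\Phi$ also coincides with the restriction of the transverse K\"ahler form $\Omega-\eta\wedge\theta$ of Proposition~\ref{quasi-Vais-trans-Ka}. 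Since $d\Omega = d\eta\wedge\theta$ on $M$ and $\theta$ restricts to $0$ on $L$, pulling back gives $d\Phi = (d\eta\wedge\theta)|_L = 0$.

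The substantive step is normality, i.e.\ $N_\phi + d\eta\otimes\xi = 0$. Here I would exploit that $J$ is integrable on $M$, so $N_J\equiv 0$. Fixing $X,Y\in\Gamma(TL)$, I would extend them to $U$-invariant vector fields on $M$ (using the local product structure coming from $U$ being a parallel unit field with $\theta$ closed, so that the flow of $U$ trivializes a neighbourhood of $L$); then $[U,X]=[U,Y]=0$, and from $\lie_U J = 0$ and $\lie_U\eta=0$ one gets $[U,\phi X]=0$ and $U(\eta(X))=0$, so that $\phi X,\phi Y$ are again $U$-invariant. Substituting $JX=\phi X-\eta(X)U$, $JY=\phi Y-\eta(Y)U$ into $0 = N_J(X,Y)=[JX,JY]-J[JX,Y]-J[X,JY]-[X,Y]$ and expanding with the Leibniz rule for brackets, every term carrying a derivative of $\eta(X)$ or $\eta(Y)$ along $U$ vanishes. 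Projecting the resulting identity onto $TL$ (the $U$-component merely yields a further relation, automatically true since $N_J=0$) and using $\phi^2[X,Y]=-[X,Y]+\eta([X,Y])\xi$ leaves exactly
\begin{equation*}
N_\phi(X,Y) = \bigl(\eta([X,Y]) + Y(\eta(X)) - X(\eta(Y))\bigr)\,\xi = -\,d\eta(X,Y)\,\xi,
\end{equation*}
which is the desired normality condition.

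I expect the bracket bookkeeping in the expansion of $N_J$ to be the only delicate point. The three features that make it go through cleanly are: that brackets of tangent fields stay tangent to the integrable leaf $L$, so $N_\phi(X,Y)$ has no $U$-component; that the identity $JZ=\phi Z-\eta(Z)U$ holds uniformly on $TL$ (in particular for $Z=V$, where it reads $JV=-U$); and that the $U$-invariance of the chosen extensions annihilates all the unwanted $U$-derivative terms, so that what survives reassembles precisely into $N_\phi(X,Y)+d\eta(X,Y)\xi$.
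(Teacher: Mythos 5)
Your proof is correct. Its treatment of the metric axioms (which the paper's proof takes for granted) and of $d\Phi=0$ — identifying $\Phi$ with the pullback of $\Omega$ to $L$ and restricting $d\Omega=d\eta\wedge\theta$ — matches the paper exactly, but your normality argument is organized along a genuinely different route. The paper works pointwise and splits into two cases: for $X,Y\in\left\langle U,V\right\rangle^{\perp}$ it expands $N_\phi(X,Y)$ directly via the identity $\phi W=JW+\eta(W)U$, obtaining $N_J(X,Y)+\eta([X,Y])V+\eta\left(J[JX,JY]\right)U$, and kills the $U$-term by observing that integrability of the foliation $\ker\theta$ forces $[JX,JY]$ to be orthogonal to $U$; for the pair $(X,V)$ it rewrites the surviving brackets as $(\lie_U J)(X)$ and $(\lie_U J)(JX)$ and invokes $\lie_U J=0$. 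You instead run a single computation valid for all $X,Y\in\Gamma(TL)$ (so the Reeb case $Y=\xi$ is automatic), substituting $J=\phi-\eta\otimes U$ into $N_J=0$ after choosing $U$-invariant local extensions; the invariance together with $\lie_U\eta=0$ (Lemma~\ref{u-v-invariant}) and $\lie_U J=0$ annihilates all $U$-derivative terms, and integrability of $\ker\theta$ enters only to guarantee that $N_\phi(X,Y)$ is tangent to $L$, so the $U$-component of the resulting identity can be discarded. Your route buys uniformity — no case analysis — at the price of the auxiliary device of invariant extensions, which is legitimate since $N_\phi$ is tensorial and the flow of $U$ preserves $\theta$, $J$, $g$ and $\eta$; the paper's route stays purely pointwise and needs no extensions, but must treat $\xi$ separately. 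Both arguments ultimately rest on the same two ingredients: the identity $\phi=J+\eta\otimes U$ on $\ker\theta$ and the vanishing of $N_J$.
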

\begin{proof}
We have to check that $N_\phi + d\eta \otimes \xi=0$ and $d\Phi=0$, where~$\Phi (X,Y) = g (X,\phi Y)$.

For every $W\in \left\langle U \right\rangle^\perp$, we have
\begin{equation}\label{phiW}
\phi(W) = \phi\left( W -\eta(W) V \right) = J(W -\eta(W) V) = JW +
\eta(W) U.
\end{equation}

For $X$, $Y\in \left\langle U,V \right\rangle^\perp$, we obtain by
applying~\eqref{phiW} several times
\begin{equation*}
\begin{aligned}
 N_\phi (X,Y) &  = \phi^2 \left[ X,Y \right] 
 -\phi (\left[ \phi X,Y \right] + \left[ X,\phi Y \right])  
 + \left[ \phi X, \phi Y \right]  \\[0.7ex] &=
N_J\left( X,Y \right) + \eta \left( \left[ X,Y \right] \right) V +
\eta\left( J\left[ X,Y \right] - \left[ JX,Y \right]  - \left[ X,JY
\right]\right) U.
\end{aligned}
\end{equation*}
Since $X$ and $Y$ are orthogonal to $V$, we have $\eta \left( \left[ X,Y
\right] \right) = -d\eta \left( X,Y \right)$. Further, using integrability, the
last term in  the above formula can be written as $\eta \left(J\left[ JX,JY
\right]  \right) U$. Since $X$ and $Y$ are orthogonal to $V$, it follows that
$JX$ and $JY$ are orthogonal to $U$. As the foliation $\left\langle U
\right\rangle^\perp$ is integrable, we get that $\left[ JX,JY \right]$ is
orthogonal to $U$, and thus $J\left[ JX,JY \right]\in \left\langle V
\right\rangle^\perp$. This shows that $\eta\left( J\left[ JX,JY \right] \right)
=0$. Therefore $N_\phi \left( X,Y \right) = - d\eta \left( X,Y \right)V$ for
$X$, $Y\in \left\langle U, V \right\rangle^\perp$.

Now, suppose that $X\in \left\langle U,V \right\rangle^\perp$. Then, applying~\eqref{phiW}, we get
\begin{equation*}
\begin{aligned}
N_\phi\left( X,V \right) = -\left[ X,V \right] - J\left[ JX,V \right]
+\eta\left( \left[ X,V \right] \right) V - \eta\left( \left[ JX,V \right] -
J\left[ X,V \right] \right) U.
\end{aligned}
\end{equation*}
From integrability of $J$, we have
\begin{equation*}
\begin{aligned}
\left[ X,V \right] + J\left[ JX,V \right] &= \left( \lie_U J \right)(X)\\
\left[ JX,V \right] - J\left[ X,V \right] &= \left( \lie_U J \right) \left( JX
\right).
\end{aligned}
\end{equation*}
As, by definition of quasi-Vaisman manifold $\lie_U J=0$, we get
that $N_\phi \left( X,V \right) = -d\eta \left( X,V \right)V$.

It is left to show that $d\Phi =0$. For any $X$, $Y$, $Z\in \left\langle U
\right\rangle^\perp$, we have $d\Phi \left( X,Y,Z \right) = d\Omega \left( X,Y,Z
\right) = \left( d\eta \wedge \theta \right)\left( X,Y,Z \right) =0$ as
$\theta\left( X \right) = \theta\left( Y \right) = \theta\left( Z \right) =0$.
\end{proof}

Now, we show that starting with a quasi-Vaisman manifold we can
construct a new quasi-Sasakian manifold by using mapping torus construction.

Suppose that $(M^{2n+2}, J, g,\theta)$ is a quasi-Vaisman manifold with
quasi-anti-Lee $1$-form $\eta$,  quasi-Lee vector field $U$, and  quasi-anti-Lee
vector field $V$. Then, on the product manifold $M \times \mathbb{R}$ we
consider the almost contact metric structure $(\varphi, \xi, \eta, h)$, where
$h$ is the metric product of $g$ and the standard metric  on $\mathbb{R}$, that is,
\begin{equation*}\label{metric-quasi-Sasakian}
h = g + dt \otimes dt,
\end{equation*}
the Reeb vector field $\xi=V$, $\eta = h(\xi,-)$, and the $(1, 1)$-tensor field $\varphi$ is given by
\begin{equation*}\label{varphi-xi-quasi-Sasakian}
\begin{aligned}
&\varphi U = \ddt,\quad \varphi \ddt = -U,\quad \varphi V =0,\\[2ex] &\varphi X=
JX,\quad \mbox{for } X \in \left\langle U,V \right\rangle^{\perp}.
\end{aligned}
\end{equation*}

\begin{proposition}\label{Vaisman-R-circle}
The almost contact metric structure $(\varphi, \xi, \eta , h)$ on $M \times
\mathbb{R}$ is quasi-Sasakian.
\end{proposition}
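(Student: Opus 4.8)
The plan is to check, in order, that $(\varphi,\xi,\eta,h)$ is an almost contact metric structure, that its fundamental $2$-form $\Phi$ is closed, and that it is normal; these three facts together give quasi-Sasakianity. Write $E=\frac{\partial}{\partial t}$ and decompose $T(M\times\R)=\mathcal{H}\oplus\langle U\rangle\oplus\langle V\rangle\oplus\langle E\rangle$, where $\mathcal{H}=\langle U,V\rangle^{\perp}\subset TM$ is $J$-invariant. On $\mathcal{H}$ one has $\varphi=J$, while $\varphi$ rotates the plane $\langle U,E\rangle$ by $U\mapsto E\mapsto -U$ and annihilates $\xi=V$; using $g(JX,JY)=g(X,Y)$ and the relations $\theta(U)=\eta(V)=1$, $\theta(V)=\eta(U)=0$ from \eqref{normalizar}, the identities $\varphi^{2}=-\id+\eta\otimes\xi$, $\eta(\xi)=1$ and $h(\varphi\,\cdot,\varphi\,\cdot)=h-\eta\otimes\eta$ are verified summand by summand. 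This part is routine.

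For $d\Phi=0$ I would first identify $\Phi$ explicitly. Evaluating $\Phi(A,B)=h(A,\varphi B)$ on the above splitting gives $\Phi=\hat{\Omega}+dt\wedge\theta$, where $\hat{\Omega}=\Omega-\eta\wedge\theta$ is the transverse K\"ahler form of $M$ pulled back to $M\times\R$. By Proposition~\ref{quasi-Vais-trans-Ka} the form $\hat{\Omega}$ is closed, and since $\theta$ is closed we have $d(dt\wedge\theta)=-dt\wedge d\theta=0$; hence $d\Phi=0$.

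The real work is normality, $N_{\varphi}+d\eta\otimes\xi=0$, which I would verify by evaluating both sides on pairs taken from the splitting, exactly as in the proof of Proposition~\ref{leafqV}. Since $E$ commutes with every vector field pulled back from $M$ and $[U,V]=0$, all pairs involving $E$ and the pair $(U,V)$ vanish at once. For a pair $(X,W)$ with $X\in\mathcal{H}$ and $W\in\{U,V,E\}$, the hypotheses $\lie_{U}J=0$ and $\lie_{V}J=0$ (the latter from Proposition~\ref{quasi-Vais-trans-Ka}) yield $[W,JX]=J[W,X]$, while Lemma~\ref{u-v-invariant} together with $\lie_{U}\theta=0$ (which holds because $d\theta=0$ and $\theta(U)=1$) forces $[U,X],[V,X]\in\mathcal{H}$; the surviving terms then cancel and $N_{\varphi}(X,W)=0=d\eta(X,W)\,\xi$, the last equality since $i_{U}d\eta=i_{V}d\eta=0$ and $d\eta$ has no $dt$-component. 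The decisive case is $X,Y\in\mathcal{H}$, where $\varphi=J$ and a short manipulation using $N_{J}=0$ gives $N_{\varphi}(X,Y)=\eta([X,Y])\,V-\big(\eta([JX,Y])+\eta([X,JY])\big)\,U$. The first summand is $-d\eta(X,Y)\,\xi$, which is precisely the inhomogeneous term demanded by normality, so everything comes down to showing that the $U$-component vanishes, i.e. $\eta([JX,Y])+\eta([X,JY])=0$. I expect to obtain this identity by contracting $N_{J}(X,Y)=0$ with $\theta=g(U,-)$, using $g(U,JZ)=-\eta(Z)$ and the fact that $d\theta=0$ together with $\theta|_{\mathcal{H}}=0$ makes both $\theta([X,Y])$ and $\theta([JX,JY])$ vanish. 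This is the main obstacle, and it is the single point where the full integrability of $J$ on $M$ — not merely its transverse part — is used.
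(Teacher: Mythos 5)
Your argument is correct, but it follows a genuinely different route from the paper's. You prove quasi-Sasakianity by direct verification on $M\times\R$: you identify the fundamental form as $\Phi=\hat\Omega+dt\wedge\theta$, with $\hat\Omega=\Omega-\eta\wedge\theta$ the closed basic transverse K\"ahler form from Proposition~\ref{quasi-Vais-trans-Ka}, so that $d\Phi=0$ follows from $d\theta=0$; and you check $N_\varphi+d\eta\otimes\xi=0$ case by case on the splitting $\mathcal{H}\oplus\langle U\rangle\oplus\langle V\rangle\oplus\langle E\rangle$. The crux you isolate, $\eta([JX,Y])+\eta([X,JY])=0$ for $X,Y\in\mathcal{H}$, does go through exactly as you predict: contracting $N_J(X,Y)=0$ with $\theta$ and using $\theta\circ J=-\eta$ reduces it to $\theta([X,Y])=\theta([JX,JY])=0$, both of which follow from $d\theta=0$ together with $\theta|_{\mathcal{H}}=0$ and $(\theta\circ J)|_{\mathcal{H}}=-\eta|_{\mathcal{H}}=0$; this is the same mechanism the paper employs inside the proof of Proposition~\ref{leafqV}, where one applies $J$ to $N_J=0$ and uses integrability of $\ker\theta$ to kill $\eta(J[JX,JY])=\theta([JX,JY])$. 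The paper's own proof of Proposition~\ref{Vaisman-R-circle}, by contrast, does no computation on $M\times\R$ at all: since $U$ is parallel, $M$ is a local Riemannian product of a real line and a leaf of $\ker\theta$, which is quasi-Sasakian by Proposition~\ref{leafqV}; hence $M\times\R$ is a local product of that quasi-Sasakian leaf with the $2$-dimensional factor $\mathcal{D}=\langle U,\partial/\partial t\rangle$, on which $\varphi$ restricts to an (automatically integrable) complex structure whose associated $2$-form is an area form, so the $\mathcal{D}$-leaves are K\"ahler; quasi-Sasakianity then follows from the fact that a local product of a quasi-Sasakian manifold and a K\"ahler manifold is quasi-Sasakian. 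Your version buys self-containedness and the explicit formula for $\Phi$, at the cost of essentially redoing the Proposition~\ref{leafqV} computation (note also that pairs $(X,E)$ with $X\in\mathcal{H}$ are not disposed of by commutation alone, since $\varphi E=-U$ brings in $[X,U]$ and $[JX,U]$; they are properly handled only in your $(X,W)$ case). The paper's version buys brevity and reuses Proposition~\ref{leafqV} as a black box, but it rests on the standard, and there unproved, fact that normality and closedness of the fundamental $2$-form pass to local products with K\"ahler factors.
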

\begin{proof}
As $U$ is parallel, from Proposition~\ref{leafqV} it follows that $M$ is a local product of a
real line and a quasi-Sasakian manifold. Therefore $M\times \R$ is a local
product of a quasi-Sasakian manifold and $\R^2$, where the distribution
$\mathcal{D}$ tangent
to the  {2-dimensional} factor is generated by $U$ and $\ddt$.
Clearly, the restriction $J_{\mathcal D}$ of $\phi$ to $\mathcal{D}$ is an integrable almost
complex structure on $\mathcal{D}$.
For $X$, $Y\in \mathcal{D}$, we define $\omega\left( X,Y \right) = h\left( X,J_{\mathcal D}Y
\right)$. We get
\begin{equation*}
\omega \left( U, \ddt \right) = h\left( U, J_{\mathcal D}\ddt \right) = h(U, -U) = -1.
\end{equation*}
This implies that $\omega$ is a volume form on every
leaf of $\mathcal{D}$. Thus the leaves of $\mathcal{D}$ are K\"ahler.
Therefore, $M\times \R$ is a local product of quasi-Sasakian and a K\"ahler
manifolds, and hence it is a quasi-Sasakian manifold \it{per se}.
\end{proof}
Let $\left( M,J,g,\theta \right)$ be a Vaisman manifold.
Suppose $f\colon M\to M$ is an isometry that preserves the quasi-Vaisman structure.
Then for every $a\in \R_{>0}$, the mapping torus $M_{(f,a)}$ inherits a
quasi-Sasakian structure from $M\times \R$.

\subsection{Model of a mapping torus}
In this section, we will describe a model for a mapping torus by an isometry.
\begin{proposition}
\label{mappingtorusmodel}
Let $(M,g)$ be a compact  Riemannian manifold,  $f$ an isometry of $M$, and $a$ a
positive real
number.
Then the  CDGA $\left( \Omega^*(M)^f\otimes \bw \left\langle y \right\rangle,\ dy =0 \right)$
is a model of the mapping torus $M_{(f,a)}$, where
\[
\Omega^*(M)^f = \{ \alpha \in \Omega^*(M) \;|\; f^*\alpha = \alpha \}.
\]
\end{proposition}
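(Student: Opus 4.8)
The plan is to exhibit on $N:=M_{(f,a)}$ a free Killing action of the abelian Lie algebra $\R$, apply the Chevalley machinery of Section~\ref{Chevalley}, and then identify the resulting basic complex with $\Omega^*(M)^f$.

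First I would record the geometry. The vector field $U$ on $N$ induced by $\ddt$ is unitary and parallel, hence Killing and nowhere vanishing, so $f_U\colon\R\to\vf(N)$, $f_U(s)=sU$, is a free action of $\R$ by Killing vector fields generating a rank-$1$ foliation $\fol_U$. Because $\ddt$ and $dt$ are $\rho_{(f,a)}$-invariant and $\rho_{(f,a)}(1,-)^*dt=dt$, the form $dt$ descends to a closed $1$-form $\theta$ on $N$ with $\theta(U)=g(U,U)=1$. As $U$ has constant unit length, Corollary~\ref{alg-conn-orthonormal} shows that $\chi\colon\R^*\to\Omega^1(N)$ sending the degree-$1$ generator $y$ to $g(U,-)=\theta$ is an algebraic connection for the operation $i_U$; crucially $\bar\chi(y)=d\theta=0$.

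Next I would assemble the quasi-isomorphisms. By Theorem~\ref{invariant} the inclusion $\Omega^*_{\lie_U}(N)\hookrightarrow\Omega^*(N)$ is a quasi-isomorphism. Applying Theorem~\ref{chevalley} to the abelian algebra $\R$, the operation $i_U$, and the connection $\chi$ (so that $dy=\bar\chi(y)=0$) yields an isomorphism of CDGAs
\begin{equation*}
\left(\Omega^*_{i_U,\lie_U}(N)\otimes\bw\left\langle y\right\rangle,\ dy=0\right)\xrightarrow{\ \cong\ }\Omega^*_{\lie_U}(N).
\end{equation*}
Hence $\left(\Omega^*_{i_U,\lie_U}(N)\otimes\bw\left\langle y\right\rangle,\ dy=0\right)$ is a model of $N$, and it remains to identify the basic complex $\Omega^*_{i_U,\lie_U}(N)$ with $\Omega^*(M)^f$ as CDGAs.

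The crux is this last identification, which I would prove directly on the cover. Let $\pi\colon M\times\R\to N$ be the covering projection, $p\colon M\times\R\to M$ the first projection, and $\rho:=\rho_{(f,a)}(1,-)\colon (x,t)\mapsto(f(x),t+a)$ the generator of the deck group, so that $p\circ\rho=f\circ p$. Writing a form on $M\times\R$ as $\alpha_0+\alpha_1\wedge dt$ with $\alpha_0,\alpha_1$ families of forms on $M$ parametrized by $t$, the conditions $i_{\ddt}(\,\cdot\,)=\lie_{\ddt}(\,\cdot\,)=0$ force $\alpha_1=0$ and $\partial_t\alpha_0=0$, so that the $\ddt$-basic forms are exactly $p^*\Omega^*(M)$. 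Since $\pi_*\ddt=U$, a form $\omega$ on $N$ is $U$-basic if and only if $\pi^*\omega=p^*\beta$ for a unique $\beta\in\Omega^*(M)$; and as $\pi^*\omega$ is automatically $\rho$-invariant, the identity $\rho^*p^*\beta=p^*(f^*\beta)$ shows this is equivalent to $f^*\beta=\beta$. The assignment $\omega\mapsto\beta$ is therefore a bijection $\Omega^*_{i_U,\lie_U}(N)\to\Omega^*(M)^f$, and being defined by pullback it is an isomorphism of CDGAs. Tensoring with $\bw\langle y\rangle$ transports the model above to $\left(\Omega^*(M)^f\otimes\bw\langle y\rangle,\ dy=0\right)$, which is thus a model of $M_{(f,a)}$. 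The only real work is this basic-form computation; the rest is a direct application of Theorem~\ref{invariant} and Theorem~\ref{chevalley}.
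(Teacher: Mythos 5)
Your proof is correct and follows essentially the same route as the paper's: invariance of forms under the Killing field $U$ (Theorem~\ref{invariant}), the algebraic connection $\chi(s)=s\theta$ with $d\theta=0$ (Corollary~\ref{alg-conn-orthonormal}), the Chevalley isomorphism (Theorem~\ref{chevalley}), and finally the identification of the basic complex with $\Omega^*(M)^f$. The only difference is cosmetic: where the paper asserts without detail that the chain $\Omega^*_B\left( M_{(f,a)},\xi \right) \xrightarrow{\pi^*} \Omega^*_B\left( M\times \R, \partial/\partial t\right)^{\hat{f}} \xleftarrow{pr_1^*} \Omega^*(M)^f$ consists of isomorphisms, you verify this identification explicitly on the cover, which is a welcome filling-in of the same step.
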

\begin{proof}
Denote by $\xi$ the vector field on $M_{(f,a)}$ induced by the vector field
$\partial/\partial t$ on $M\times \R$.
Since $\xi$ is unitary and parallel, by Theorem~\ref{invariant}, the inclusion
$\Omega^*_{\lie_\xi} \left( M_{(f,a)} \right) \hookrightarrow
\Omega^*(M_{(f,a)})$ is a
quasi-isomorphism of CDGAs.

Denote by
  $\theta$ the  metric dual of $\xi$. By
Corollary~\ref{alg-conn-orthonormal}, the map
\begin{equation*}
\begin{aligned}
\chi  \colon \R & \to \Omega^1(M_{(f,a)}) \\
s  & \mapsto  s\theta
\end{aligned}
\end{equation*}
is an algebraic connection for the locally free action of $\R$ on $M_{(f,a)}$
induced by~$\xi$. Since $d\theta=0$, by Theorem~\ref{chevalley} the CDGA
\begin{equation*}
\left( \Omega^*_B(M_{(f,a)},\xi) \otimes \bw \left\langle y \right\rangle, dy =
0 \right)
\end{equation*}
is quasi-isomorphic to $\Omega^*_{\lie_\xi}(M_{(f,a)})$ and thus to
$\Omega^*(M_{(f,a)})$.

Define $\hat{f}\colon M\times \R \to M\times \R$ by  $\hat{f} (m,t) =
(f(m), t+a)$. Then the following homomorphisms of CDGAs induced by pull-backs
of differential forms
\begin{equation}
\label{isochain}
\begin{aligned}
\Omega^*_B\left( M_{(f,a)},\xi \right) \xrightarrow{\pi^*} \Omega^*_B\left( M\times \R,
\partial/\partial t
\right)^{\hat{f}} \xleftarrow{pr_1^*} \Omega^*(M)^f
\end{aligned}
\end{equation}
are isomorphisms of CDGAs.
Tensorising~\eqref{isochain} with $\bw\left\langle y \right\rangle$ and
defining $dy=0$ on all resulting graded algebras,  we get
the isomorphisms of CDGAs.
 \end{proof}
\subsection{Mapping torus and a semi-direct product. }
Let $G$ be a Lie group and $\Gamma$ a cocompact subgroup of $G$. Consider an
action $\phi_a \colon G\to G$, $a\in \R$ of $\R$ on $G$. Then the product on
the
semi-direct product $G\rtimes_\phi \R$ is given by
$(g,t)(g',t') = (g\phi_t(g'), t+t')$.
Suppose there is $a\in \R_{>0}$ such that $\phi_a (\Gamma) = \Gamma$. Then
the group $a\Z$ acts on $\Gamma$ and the semi-direct product $\Gamma \rtimes_\phi a\Z $
can be considered as a cocompact discrete subgroup of $G\rtimes_\phi \R$.

Moreover, the action of $a\Z$ on $G$ descends to an action  on
$\Gamma\backslash G$. Therefore, we can consider the mapping torus
$\left(\Gamma\backslash G
		\right)_{(\phi_a,a)}$.
We have the following identification.
\begin{proposition}
\label{mappingtorussemidirectproduct}
Suppose $G$ is endowed with a Riemannian metric $g$, which is invariant under
the action of $\Gamma$, and all $\phi_a$ are isometries. Then
the mapping torus $\left(\Gamma\backslash G \right)_{(\phi_a,a)}$ and
the quotient
\begin{equation}
\label{gammaphi}
 \left( \Gamma\rtimes_\phi
a\Z \right) \backslash \left( G\rtimes_\phi \R \right)
\end{equation}
are isometric Riemannian manifolds.
\end{proposition}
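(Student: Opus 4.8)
The plan is to exhibit both manifolds as quotients of one and the same Riemannian manifold by one and the same group of isometries, so that the identity map descends to the required isometry. The common object will be $G\times\R$, the underlying manifold of $G\rtimes_\phi\R$, equipped with the product metric $g+dt\otimes dt$ (here $g$ is the metric on $G$ from the hypothesis and $dt\otimes dt$ is the standard metric on $\R$).

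First I would write the left action of $\Gamma\rtimes_\phi a\Z$ on $G\rtimes_\phi\R$ explicitly. From the semidirect-product multiplication one gets $(\gamma,ak)\cdot(g,t)=(\gamma\,\phi_{ak}(g),\,t+ak)$ for $\gamma\in\Gamma$ and $k\in\Z$, so that the quotient \eqref{gammaphi} is $G\times\R$ modulo the orbit relation $(g,t)\sim(\gamma\,\phi_{ak}(g),\,t+ak)$. On the other hand, since $g$ is $\Gamma$-invariant the projection $G\to\Gamma\backslash G$ is a Riemannian covering, the descent of $\phi_a$ to $\Gamma\backslash G$ is $\Gamma g\mapsto\Gamma\phi_a(g)$, and its $k$-th iterate is $\Gamma g\mapsto\Gamma\phi_{ak}(g)$ (using $\phi_a^k=\phi_{ak}$). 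Hence $(\Gamma\backslash G)_{(\phi_a,a)}$ is the quotient of $G\times\R$ by the deck group of the composite covering
\[
G\times\R\longrightarrow(\Gamma\backslash G)\times\R\longrightarrow(\Gamma\backslash G)_{(\phi_a,a)},
\]
which is generated by the two diffeomorphisms $(g,t)\mapsto(\gamma g,t)$, $\gamma\in\Gamma$, and $(g,t)\mapsto(\phi_a(g),t+a)$.

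The main step, and the only place where a genuine verification is needed, is to show that the orbit of $(g,t)$ under this deck group is exactly $\{(\gamma\,\phi_{ak}(g),\,t+ak)\mid\gamma\in\Gamma,\ k\in\Z\}$, i.e. the $\Gamma\rtimes_\phi a\Z$-orbit above. Closure under the $\Gamma$-generator is immediate; closure under the generator $(g,t)\mapsto(\phi_a(g),t+a)$ is the crucial point, since applying it to $(\gamma\,\phi_{ak}(g),t+ak)$ produces $(\phi_a(\gamma)\,\phi_{a(k+1)}(g),\,t+a(k+1))$, and here one uses the hypothesis $\phi_a(\Gamma)=\Gamma$ (so $\phi_a(\gamma)\in\Gamma$) to reabsorb the twist and stay inside the prescribed set. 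Thus the two orbit relations on $G\times\R$ coincide and $\id_{G\times\R}$ descends to a diffeomorphism between the two spaces.

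It remains to check that the two metrics agree, and both are the metric induced from $g+dt\otimes dt$ on $G\times\R$: for the mapping torus this is the lift of the product metric $h+dt\otimes dt$ of the construction, with $h$ the metric on $\Gamma\backslash G$ induced by $g$ (whose lift to $G$ is $g$ by the covering property); for \eqref{gammaphi} it is by definition induced from $g+dt\otimes dt$ on $G\rtimes_\phi\R$. Invariance of $g+dt\otimes dt$ under the left action is routine: for fixed $(\gamma,ak)$ the map $(g,t)\mapsto(\gamma\,\phi_{ak}(g),t+ak)$ splits as $L_\gamma\circ\phi_{ak}$ on $G$ times a translation on $\R$, and its pullback of the $G$-metric is $\phi_{ak}^*L_\gamma^* g=\phi_{ak}^*g=g$ because $g$ is $\Gamma$-invariant and each $\phi_a$ (hence each $\phi_{ak}=\phi_a^k$) is an isometry, while the translation preserves $dt\otimes dt$. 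Consequently $g+dt\otimes dt$ descends to both quotients and is carried to itself by the diffeomorphism above, which is therefore an isometry.
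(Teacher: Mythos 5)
Your proof is correct, and it takes a route that differs from the paper's in \emph{where} the comparison is made. The paper descends one level first: it writes the mapping torus as the iterated quotient $(a\Z)\backslash\bigl(\Gamma\backslash(G\times\R)\bigr)$, invokes normality of $\Gamma$ in $\Gamma\rtimes_\phi a\Z$ (two-step reduction) to present the quotient \eqref{gammaphi} as the quotient of $\Gamma\backslash(G\rtimes_\phi\R)$ by the residual group $\Gamma\backslash(\Gamma\rtimes_\phi a\Z)\cong a\Z$, and then checks that the two induced $a\Z$-actions on the common intermediate space $\Gamma\backslash(G\times\R)$ coincide. You instead compare at the top, on $G\times\R$ itself, by identifying the deck group of the composite covering $G\times\R\to(\Gamma\backslash G)_{(\phi_a,a)}$ with the left action of the full group $\Gamma\rtimes_\phi a\Z$. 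The essential inputs are identical in the two arguments --- the multiplication law $(\gamma,ak)\cdot(g,t)=(\gamma\,\phi_{ak}(g),t+ak)$ and the hypothesis $\phi_a(\Gamma)=\Gamma$: in the paper the latter is what makes the residual $a\Z$-action well defined, while in your argument it is what lets you reabsorb $\phi_a(\gamma)\in\Gamma$ after applying the twisted translation $(g,t)\mapsto(\phi_a(g),t+a)$, i.e.\ precisely the conjugation relation that makes the group generated by $\Gamma$ and this translation a copy of $\Gamma\rtimes_\phi a\Z$. Your route buys an explicit treatment of the Riemannian side, which the paper leaves implicit: you verify once that $g+dt\otimes dt$ is invariant under the whole group, so both quotient metrics are manifestly the descent of one and the same metric on $G\times\R$. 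The paper's route buys brevity: after dividing by $\Gamma$ there is no need to discuss deck groups or generated groups, since only two $\Z$-actions remain to be matched; correspondingly, the one point you pass over quickly --- that the quotient of $G\times\R$ by your generated group really is the mapping torus, i.e.\ that the composite covering is regular with that deck group --- is exactly the content of your orbit computation, so the gap is only expository.
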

\begin{proof}
The mapping torus $\left( \Gamma\backslash G \right)_{(\phi_a,a)}$ is by
definition the quotient of $\left( \Gamma\backslash G \right)\times \R$ under
the action $\rho$ of $a\Z$ defined by $\rho_{ak} ([g],t) = ([\phi^k_a(g)],
ak+t)  = ([\phi_{ka}(g)],ak + t) $.  We can identify $\left( \Gamma\backslash G \right)\times \R$
with $\Gamma \backslash \left( G\times \R \right)$.
Thus $\left( \Gamma\backslash G \right)_{(\phi_a,a)}$ is isometric to the
iterated quotient $\left( a\Z \right) \backslash \left( \Gamma \backslash \left(
G\times \R \right) \right)$.

Now, as $\Gamma$ is a normal subgroup of $\Gamma \rtimes_\phi a\Z$, by two step
reduction, the manifold~\eqref{gammaphi}
is isometric to the quotient of $\Gamma\backslash \left( G\rtimes_\phi
\R
\right)$ under the action of  the group $\Gamma\backslash \left( \Gamma \rtimes_\phi a\Z
\right) \cong a\Z$ given by
\begin{equation*}
[(\gamma,ak)] \cdot [(g,t)] = [ (\gamma,ak)\cdot (g,t)] = [ (\gamma
\phi_{ak} (g), ak +t) ] = [ (\phi_{ak}(g), ak+t) ] .
\end{equation*}
Now one can see that both $\left( \Gamma\backslash G \right)_{(\phi_a,a)}$ and
\eqref{gammaphi} are quotients of $\Gamma\backslash \left( G\times \R \right)$
under the same action of $a\Z$. Hence they are isometric.
\end{proof}

\subsection{Examples of quasi-Sasakian and  quasi-Vaisman manifolds.}
In this section we give some explicit examples of solvmanifolds that admit
a quasi-Sasakian or a quasi-Vaisman structure. We also exhibit almost
formal models for them.

\begin{example}
\label{ex:vaisman}
We start by reviewing a construction of a Vaisman solvmanifold considered
in~\cite{MaPa}.
Consider the Heisenberg group $H(1,1)$. In the notation of
Section~\ref{quasi-Sasakian}, its
standard left-invariant Sasakian structure $(\varphi_H, \xi_H, \eta_H, h_H)$ is given by
\[
\varphi_H = \alpha_1 \otimes X_2 - \alpha_2 \otimes X_1, \; \; \xi_H = X_3, \;
\; \eta_H = \alpha_3,\quad
h_H = \sum_{i=1}^{3} \alpha_i \otimes \alpha_i.
\]
Further, define $\Gamma\left( 1,1 \right)$ as  the subgroup of $H\left( 1,1
\right)$ consisting of the matrices in $H\left( 1,1
\right)$ with integer entries. It is obviously cocompact.
Denote by $L$ the compact Sasakian manifold $\Gamma(1,1)\backslash H\left( 1,1
\right)$.

Consider the action $F$ of $\R$ on $H(1,1)$   given by
\begin{equation*}
\begin{aligned}
F_u (p, q, t) & = (p \cos u + q \sin u, -p \sin u + q \cos u,
 t  + \frac{1}{4} (q^2 - p^2)\sin(2u) - p q \sin^{2}u).
\end{aligned}
\end{equation*}
It acts by group automorphisms and preserves the standard left-invariant
Sasakian structure (see~\cite{MaPa}).  For $u=\frac\pi{2}$, we have
\begin{equation*}
F_{\frac\pi{2}} \left( p,q,t \right)  = (q,-p, t -pq).
\end{equation*}
So the cocompact subgroup $\Gamma\left( 1,1 \right)$ of $H\left( 1,1
\right)$ is preserved by  $F_{\frac\pi2}$. Hence we can consider the Vaisman
manifold $M :=  L_{(f,\frac\pi2)}$ with $f=F_{\frac\pi2}$.
By Proposition~\ref{mappingtorussemidirectproduct}, $M$ is a solvmanifold
modelled on the solvable group $G:=H\left( 1,1 \right)\rtimes_{F} \R$. Notice
that $G$ is not nilpotent and even not a completely solvable Lie
group.

Now we will find an almost formal model of $M$.
By
Proposition~\ref{mappingtorusmodel}, the CDGA
\begin{equation}
\label{omegalf}
\left( \Omega^*\left( L \right)^f\otimes \bw\left\langle y \right\rangle, dy=0
\right)
\end{equation}
is a model of $M$. We will use the following remark.
\begin{remark}
\label{qicdgas}
If $A$ and $B$ are quasi-isomorphic CDGAs, then $\left( A\otimes \bw\left\langle
y \right\rangle, dy=0 \right)$ and $\left( B\otimes \bw\left\langle y
\right\rangle, dy=0  \right)$ are quasi-isomorphic as well.
\end{remark}
Thus to simplify~\eqref{omegalf}, we can replace $\Omega^*(L)^f$ with a
quasi-isomorphic CDGA.
Since $L$ is a nilmanifold,  by  Nomizu theorem, the inclusion $j\colon \bw \heisla\left( 1,1
\right)^* \to \Omega^*(L)$, given by left-invariant extension, is a
quasi-isomorphism of CDGAs. The image of $j$ is generated by $1$-forms
$\alpha_1$, $\alpha_2 $, $\alpha_3 $. By easy computation
\begin{equation*}
f^* \alpha_1  = \alpha_2,\quad f^* \alpha_2  = -\alpha_1 ,\quad f^*\alpha_3  =
\alpha_3 .
\end{equation*}
In particular, the image of $j$ is invariant under the action of $f$.

Notice that the automorphism $f$ of $\Omega^*(L)$ is of finite order, namely
$f^4=\id$.
Therefore the map
\begin{equation*}
\hat\jmath\colon \left( \bw h\left(1,1 \right)^* \right)^{f^*} \to
\Omega^*(L)^f
\end{equation*}
induced by $j$ is a quasi-isomorphism.
The image of $\hat\jmath$ has a  basis
\begin{equation*}
1,\quad \alpha_3, \quad \alpha_1  \wedge \alpha_2,\quad \alpha_1 \wedge \alpha_2 \wedge
\alpha_3 .
\end{equation*}
The de Rham differential gives zero on all elements of this basis, except
$\alpha_3 $:
\begin{equation*}
d\alpha_3  = d\left( dt -pdq \right) = - dp\wedge dq = -\alpha_1 \wedge
\alpha_2 .
\end{equation*}
Thus we can conclude that $\Omega^*(L)^f$ is quasi-isomorphic to the CDGA
\begin{equation}
\label{balpha}
\left( B\otimes \bw \left\langle \alpha_3  \right\rangle, d\alpha_3  =
-\alpha_1 \wedge \alpha_2  \right),
\end{equation}
where $B$ is a CDGA with zero differential and  $B_0 =\left\langle 1 \right\rangle$, $B_2 =
\left\langle \alpha_1 \wedge \alpha_2  \right\rangle$, $B_k=0$ for
$k\not=0$ or $2$. Substituting~\eqref{balpha} in~\eqref{omegalf} instead of
$\Omega^*(L)^f$, we obtain the model
\begin{equation}
\label{balphay}
\left( B\otimes \bw\left\langle \alpha_3 ,y \right\rangle, d\alpha_3  =
-\alpha_1  \wedge \alpha_2,\ dy=0 \right)
\end{equation}
of  $M$. If we define $A= \left( B\otimes \bw\left\langle y \right\rangle, dy=0
\right)$, then~\eqref{balphay} becomes
\begin{equation}
\label{aalpha}
\left( A \otimes \bw\left\langle \alpha_3  \right\rangle,\ d\alpha_3  =
-\alpha_1  \wedge \alpha_2  \right).
\end{equation}
Notice that $A$ has a zero differential, and therefore~\eqref{aalpha} is an
almost formal model. Using Theorem \ref{theorem-appendix} and carefully examining the quasi-isomorphisms we used, one
can show that in fact $A$ is isomorphic to the basic cohomology algebra
$H_B^*(M,V)$ and $B$ is isomorphic to the basic cohomology algebra $H^*_B\left(
M, \left\langle U,V \right\rangle \right)$.
\end{example}
\begin{example}
Let $G = H(1,1)\rtimes_F \R$ with the action $F$ defined above and $\Gamma =
\Gamma(1,1) \rtimes_F \frac\pi2 \Z$ its cocompact subgroup. Then the Vaisman
manifold  $M$
constructed in Example~\ref{ex:vaisman} is isomorphic to $\Gamma\backslash G$.
We consider the trivial action of $\R$ on $G$. By
Proposition~\ref{mappingtorussemidirectproduct}, the corresponding mapping torus
$N:= M_{(\id,1)}$ is a solvmanifold modelled on the solvable group $G\times \R$.
By Proposition~\ref{Vaisman-R-circle} and the discussion thereafter, $N$ has a
quasi-Sasakian structure induced by the Vaisman structure on~$M$.
Hence $N$ is an example of a quasi-Sasakian solvmanifold.

By  Proposition~\ref{mappingtorusmodel}, the CDGA
\begin{equation}
\label{omegam}
\left( \Omega^*(M) \otimes \bw \left\langle z \right\rangle,\ dz = 0 \right)
\end{equation}
is a model of $N$. By Remark~\ref{qicdgas} we can replace $\Omega^*(M)$
in~\eqref{omegam} with any model of $M$. Therefore,
substituting~\eqref{aalpha} in~\eqref{omegam}, we get
that the CDGA
\begin{equation}
\label{aalphaz}
\left( A\otimes \bw\left\langle \alpha_3 ,z \right\rangle, d\alpha_3 =-\alpha_1
\wedge \alpha_2, dz=0 \right)
\end{equation}
is a model of $N$. Defining $C=\left( A\otimes \bw\left\langle z
\right\rangle, dz=0 \right)$, we can write~\eqref{aalphaz} as
\begin{equation}
\label{calpha}
\left( C\otimes \bw\left\langle \alpha_3  \right\rangle, d\alpha_3  = -\alpha_1
\wedge \alpha_2  \right).
\end{equation}
As $A$ has a zero differential, the same is true for $C$. Hence~\eqref{calpha}
is an almost formal model of $N$.
\end{example}
\begin{example}
Now we construct an example of a  quasi-Vaisman solvmanifold.
We start with the nilpotent Lie group $G = H(1,1)\times \R^2$, which is the
group $G$ defined in Section~\ref{quasi-Sasakian} with $l=1$ and $n=2$.
As it was shown there,  $G$ admits a left-invariant quasi-Sasakian
structure. Denote by $\Gamma$ the subgroup $\Gamma(1,1) \times \Z^2$ of
$G$ and by $N$ the resulting compact quasi-Sasakian nilmanifold  $\Gamma\backslash
G$.

We define the action $A$ of $\R$ on $G = H(1,1) \times \R^2$ by
\begin{equation*}\label{A-representation}
\begin{aligned}
A_u ( (p, q, t), (x, y) )  =   \left( F_u(p,q,t),  (x \cos u + y \sin u, -x \sin u +
y \cos u)\right).
\end{aligned}
\end{equation*}
For $u=\frac\pi2$, we get
\begin{equation*}
A_{\frac\pi2} ( (p,q,t), (x,y)) = ( (-q,p,t-pq) , (y,-x)).
\end{equation*}
Denote $A_{\frac\pi2}$ by $\hat{f}$. Then by
Proposition~\ref{mappingtorussemidirectproduct}, the mapping torus
$N_{(\hat{f}, \frac\pi2)}$ is a solvmanifold modelled on $G\rtimes_A \R$.
Notice that $G\rtimes_A \R$ is not nilpotent and even not completely solvable.
 By the discussion after Proposition~\ref{qV}, the mapping torus
$N_{(\hat{f},\frac\pi2)}$ has a natural quasi-Vaisman structure induced by the
quasi-Sasakian structure on $N$.

Now we compute an almost formal model of $N_{(\hat{f},\frac\pi2)}$.
By Proposition~\ref{mappingtorusmodel}, the CDGA
\begin{equation}
\label{omegan}
\left( \Omega^*(N)^{\hat{f}}\otimes \bw \left\langle y \right\rangle, dy =0
\right)
\end{equation}
is a model of $N_{(\hat{f},\frac\pi2)}$.
Arguing as in Example~\ref{ex:vaisman}, we can replace
$\Omega^*(N)^{\hat{f}}$ with the CDGA
\begin{equation*}
\left( \overline{ B }\otimes \bw \left\langle \alpha_3  \right\rangle,
d\alpha_3 =-\alpha_1  \wedge \alpha_2  \right),
\end{equation*}
where $\overline{ B }$ is the CDGA with zero differential  and
\begin{equation*}
\overline{ B }_k =
\begin{cases}
\left\langle 1 \right\rangle, & k=0\\
\left\langle \alpha_1 \wedge \alpha_2, \beta_1  \wedge \beta_2 \right\rangle, &
k=2\\
0, & \mbox{otherwise.}
\end{cases}
\end{equation*}
Defining $\overline{ A }=\left( \overline{ B }\otimes \bw \left\langle y
\right\rangle, dy=0 \right)$, we get that~\eqref{omegan} is quasi-isomorphic to
\begin{equation*}
\left( \overline{ A }\otimes \bw\left\langle \alpha_3  \right\rangle, d\alpha_3
 = -\alpha_1  \wedge \alpha_2  \right),
\end{equation*}
which provides an almost formal model for $N_{(\hat{f},\frac\pi2)}$.
\end{example}
\section{Boothby-Wang construction and quasi-Sasakian manifolds.}
\label{BW}
Let $(M,J,g)$ be a compact K\"ahler manifold and $\beta\in \Omega^2(M)$ such that
$i_J \beta =0$ and $[\beta]\in H^2(M,\Z)$.
By \cite[Theorem~2.5]{blair2}, there is a principal circle bundle
$\pi \colon N\to M$ and a $1$-form $\eta$ on $N$ such that $d\eta =
\pi^*\beta$. Denote by $\xi$ the vertical vector field such that $\eta(\xi)=1$.
We can consider $\eta$ as a connection form on the circle bundle $N$, where $\ker\eta$
give subspaces of horizontal vectors.
Denote by $\tilde\pi$ the horizontal lift corresponding to this connection.
Define
\begin{equation*}
\phi:= \tilde\pi J\pi_*.
\end{equation*}
By \cite[Theorem~6]{morimoto63} the structure $(\phi,\xi,\eta)$ is a normal
almost contact structure on $N$.
Now, we define the Riemannian metric $h$ on $N$ by
\begin{equation*}
h (X,Y) = g(\pi_* X, \pi_* Y) + \eta(X) \eta(Y).
\end{equation*}
Now, denote by $\omega$ the symplectic form on $M$. Then
\begin{equation*}
\begin{aligned}
\Phi(X,Y) &= h(X,\phi Y) = g(\pi_* X ,\pi_* \tilde \pi J\pi_* Y)\\& = g (\pi_* X,
J\pi_* Y) = \omega(\pi_* X, \pi_* Y) = (\pi^*\omega)(X,Y).
\end{aligned}
\end{equation*}
Thus
\begin{equation*}
d\Phi = \pi^* d\omega =0.
\end{equation*}
This shows that $(\phi, \xi, \eta, h)$ is a quasi-Sasakian structure on
$N$. Note, that the index of this structure is the largest natural number
$l$ such that $[\beta]^l \not=0$ in~$H^2(M)$.

\label{rankqS}
In Theorem~\ref{classification_qS} we described all nilmanifolds that can admit
a quasi-Sasakian structure. This characterization is based on the index of the
quasi-Sasakian structure. There is also another widely considered invariant of
quasi-Sasakian structures: the rank. We say that a quasi-Sasakian manifold has
\emph{rank} $2p+1$ if
\begin{equation*}
\eta\wedge (d\eta)^{p} \not=0,\quad (d\eta)^{p+1}=0.
\end{equation*}
It is easy to see that the first of the above equations can be replaced by the
seemingly weaker condition
$(d\eta)^p\not=0$. In fact, if $\eta\wedge (d\eta)^p=0$ then also
\begin{equation*}
(d\eta)^p = i_\xi(\eta\wedge (d\eta)^p) = 0,
\end{equation*}
i.e. if $(d\eta)^p\not=0$ then $\eta\wedge (d\eta)^p \not=0$.

It is straightforward to check that if a quasi-Sasakian manifold  has rank $2p+1$ and index
$l$ then $l\le p$.
Therefore if a compact nilmanifold admits a quasi-Sasakian structure of rank
$2p+1$ it is modelled on Lie group $\heis(1,l)\times \R^{2n+1}$ with $l\le p$.
In Section~\ref{quasi-Sasakian}, we showed that for every cocompact subgroup $\Gamma$ of
$G=\heis(1,p)\times \R^{2(n-r)}$ the manifold $\Gamma\backslash G$ admits a quasi-Sasakian structure
of rank $2p+1$. But  the following question remains open: 
\begin{question}
\label{q1}
For which $l<p$ and which cocompact subgroups $\Gamma$ of
\begin{equation*}
G=\heis(1,l)\times
\R^{2(n-l)}
\end{equation*}
the manifold $\Gamma\backslash G$ admits a quasi-Sasakian structure of rank
$2p+1$?
\end{question}
As a first step to answer this question, we show that
 for every $l<p$ there is a cocompact subgroup $\Gamma$
in $G=\heis(1,l)\times \R^{2(n-l)}$ such that the manifold $\Gamma\backslash G$ admits a
quasi-Sasakian structure of rank $2p+1$.

Denote by $\tilde\Gamma$ the subgroup of $\heis(1,l)$ consisting of matrices in $\heis(1,l)$ with integer entries.
 Further we identify $\R^{2(n-l)}$ with
$\C^{n-l}$ considered with its standard K\"ahler structure. Define
\begin{equation*}
\Gamma = \tilde\Gamma \times \left( \Z +\mathbf{i} \Z \right)^{(n-l)} \subset
\heis(1,l)\times \C^{n-l}.
\end{equation*}
Then
\begin{equation*}
\Gamma\backslash G \cong \left( \tilde\Gamma\backslash \heis(1,l)\right)\times \tor_\C^{n-l},
\end{equation*}
where $\tor_\C$ denotes the complex torus $\C/\left( \Z + i\Z \right)$.
Denote by $(\phi,\xi,\eta,h)$ the quasi-Sasakian structure on $\Gamma\backslash G$
constructed in Section~\ref{quasi-Sasakian}.
Then all integral curves of $\xi$ have the same length, and we get the principal
circle bundle
\begin{equation}
\label{fibr}
\xymatrix{
S^1\ \  \ar@{>->}[r] & \Gamma\backslash G \ar@{->>}[d]^{\pi} \\ & \tor_\C^{n}.
}
\end{equation}
Write $\vol$ for the volume form on $\tor_\C$ and denote by $\pr_j$ the
projection from $\tor_\C^n$ to the $j$th component.
Then
\begin{equation*}
d\eta = \pi^*(\pr_1^*\vol + \dots +\pr^*_l \vol).
\end{equation*}
Thus~\eqref{fibr} corresponds to the class $v:=\left[ \pr_1^*\vol + \dots +\pr^*_l
\vol \right]$ in $H^2(\tor_\C^{n},\Z)$.

Let $f$ be any non-constant function in $C^\infty(\tor_\C)$. Then $di_Jdf$ is a
non-zero $2$-form
on $\tor_\C$, since
\begin{equation*}
di_Jdf = (\lap f)\vol.
\end{equation*}
Moreover $i_Jdi_Jdf = d_J^2 f=0$, where $d_J = i_J \circ d + d \circ i_J$ is the differential operator induced by $J$.
Define the $2$-form $\beta$ on $\tor_\C^n$ by
\begin{equation*}
\beta= \pr_1^*\vol + \dots + \pr^*_l \vol + \pr^*_{l+1} di_Jdf + \dots +
\pr^*_p di_Jdf.
\end{equation*}
Our aim is to apply the Boothby-Wang construction described in
this section to the complex manifold $\tor_\C^n$ and the $2$-form
$\beta$. For this we have to check that $i_J \beta =0$ and $[\beta]\in
H^2(\tor_\C^n,\Z) $. The first assertion follows from $i_Jdi_Jdf=0$. For the
second notice that
\begin{equation*}
[\beta] = \left[ \pr_1^*\vol + \dots +\pr^*_l
\vol \right] =v \in H^2(\tor_\C^n,\Z).
\end{equation*}
Denote the resulting quasi-Sasakian manifold by $(N,\phi,\xi',\eta',h')$.
Since the cohomology class of $\beta$ equals to $v$, we see that $N$ is
diffeomorphic to $\Gamma\backslash G$, i.e. $N$ is a nilmanifold.
Write $\pi$ for the projection of $N$ on $\tor^n_\C$. Then
\begin{equation*}
d\eta' = \pi^* \beta.
\end{equation*}
Thus
\begin{equation*}
(d\eta')^p = \pi^* \left( \pr_1^* \vol \wedge \dots \wedge \pr_l^*\vol \wedge
\pr^*_{l+1}di_Jdf \wedge \dots \wedge \pr_{p}^* di_Jdf \right)\not=0
\end{equation*}
and
\begin{equation*}
(d\eta')^{p+1} =0,
\end{equation*}
which shows that $\eta'$ has rank $2p+1$.
Further, we can identify the basic cohomology $H^*_B(N,\xi)$ with
$H^*(\tor_\C^n)$ via $\pi^*$.
Under this identification $[d\eta']_B$ corresponds to $[\beta]=v$. Since
$v^l\not=0$ but $v^{l+1}=0$, we see that $l$ is the index of $N$.

Thus we have
\begin{proposition}
For any $l<p\le n$ there exists a compact $(2n+1)$-dimensional quasi-Sasakian
nilmanifold of rank $2p+1$ with index $l$.
\end{proposition}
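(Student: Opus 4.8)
The plan is to realize the required manifold via the Boothby--Wang construction of this section, applied to the complex torus $\tor_\C^n$ together with a carefully chosen closed $2$-form $\beta$. The guiding principle is that the \emph{index} of the resulting quasi-Sasakian structure is governed by the \emph{cohomology class} $[\beta]$, whereas the \emph{rank} is governed by $\beta$ as a \emph{differential form}; decoupling these two invariants is precisely what makes $l<p$ achievable. Concretely, I would work inside the circle bundle $\Gamma\backslash G \to \tor_\C^n$ for $G=\heis(1,l)\times \R^{2(n-l)}$ and $\Gamma = \tilde\Gamma \times (\Z + \mathbf{i}\Z)^{(n-l)}$, exactly as set up above.

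First I would fix a non-constant $f\in C^\infty(\tor_\C)$ and set
\[
\beta = \pr_1^*\vol + \dots + \pr_l^*\vol + \pr_{l+1}^*di_Jdf + \dots + \pr_p^*di_Jdf
\]
on $\tor_\C^n$. The first $l$ volume terms pin down the cohomology class, while the $p-l$ exact terms $\pr_j^*di_Jdf$, which are nonzero $2$-forms since $di_Jdf=(\lap f)\vol$, are invisible in cohomology. To invoke the Boothby--Wang construction I must check the two hypotheses $i_J\beta=0$ and $[\beta]\in H^2(\tor_\C^n,\Z)$. The first holds because every summand is a function multiple of the $(1,1)$-form $\vol$ on its factor and $i_J$ annihilates such forms (equivalently $i_J\,di_Jdf=0$); the second holds because the exact terms drop out, giving $[\beta]=v:=[\pr_1^*\vol + \dots + \pr_l^*\vol]$, an integral class. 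The construction then yields a quasi-Sasakian manifold $(N,\phi,\xi',\eta',h')$ with $d\eta'=\pi^*\beta$, and since $[\beta]=v$ is the same class that classifies the bundle $\Gamma\backslash G \to \tor_\C^n$, the total space $N$ is diffeomorphic to $\Gamma\backslash G$, hence a compact $(2n+1)$-dimensional nilmanifold.

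It remains to read off the rank and the index. For the rank I would compute $(d\eta')^p=\pi^*(\beta^p)$: since each summand of $\beta$ is pulled back from a distinct $2$-dimensional factor $\tor_\C$, the only surviving term of $\beta^p$ is $p!\,\pr_1^*\vol\wedge\dots\wedge\pr_l^*\vol\wedge\pr_{l+1}^*di_Jdf\wedge\dots\wedge\pr_p^*di_Jdf$, which is nonzero, while $\beta^{p+1}=0$ because it would force a $2$-form to be repeated on one of the $p$ factors; this gives rank $2p+1$. For the index I would pass to basic cohomology, identifying $H^*_B(N,\xi)$ with $H^*(\tor_\C^n)$ via $\pi^*$, under which $[d\eta']_B$ corresponds to $v$; the identical factorwise argument at the level of classes yields $v^l\neq 0$ and $v^{l+1}=0$, so the index equals $l$.

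The main obstacle is the simultaneous control of rank and index: the padding forms must contribute to the wedge powers of $\beta$ (pushing the rank up to $2p+1$) yet vanish in cohomology (keeping the index at $l$), and must at the same time respect the Boothby--Wang hypothesis $i_J\beta=0$. The choice $di_Jdf$ is exactly what reconciles these three constraints, and the product structure of $\tor_\C^n$ is what makes the surviving top terms of $\beta^p$ and of $v^l$ transparently nonzero.
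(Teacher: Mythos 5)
Your proposal is correct and follows essentially the same route as the paper: the same choice of $\Gamma$ and of the form $\beta = \pr_1^*\vol + \dots + \pr_l^*\vol + \pr_{l+1}^*di_Jdf + \dots + \pr_p^*di_Jdf$ on $\tor_\C^n$, the same Boothby--Wang construction, and the same rank/index computations via $d\eta'=\pi^*\beta$ and the identification $H^*_B(N,\xi)\cong H^*(\tor_\C^n)$. Your factorwise argument for $\beta^{p+1}=0$ (with the explicit $p!$ coefficient in $\beta^p$) is, if anything, slightly more detailed than the paper's.
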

Note that in the above example the quasi-Sasakian structure on $N$ can be
considered as a modification of the quasi-Sasakian structure on $\Gamma\backslash G$
constructed in Section~\ref{quasi-Sasakian}. Namely, we have the isomorphism of principal
circle bundles
\begin{equation*}
\xymatrix{
N \ar[rr]^{\cong } \ar@{->>}[dr] && \Gamma\backslash G \ar@{->>}[dl]
\\ & \tor_\C^{n}
}
\end{equation*}
and one can verify that $\xi$ is preserved under this isomorphism.
Thus we can state the following question
\begin{question}
\label{q2a}
For which $\Gamma\subset G = \heis(1,l)\times \R^{2(n-l)}$ there is a
quasi-Sasakian structure on $\Gamma\backslash G$ of rank $2p+1$ with $l<p$ and the same
Reeb vector field as in the quasi-Sasakian structure defined in
Section~\ref{quasi-Sasakian}?
\end{question}
It is clear that if the answer to Question~\ref{q2a} is positive for $\Gamma\backslash G$
then the answer to Question~\ref{q1} is also positive for $\Gamma\backslash G$.

Note that in the above example the form $d\eta'$ on $N$ does not have a
constant rank, in the sense that there are points  $x\in N$ such that
$(d\eta')_x^{p}=0$. In fact, since $di_Jdf$ is an exact $2$-form on a compact
$2$-dimensional manifold $\tor_\C$, there are points $y\in \tor_\C$ such that
$(di_Jdf)_y=0$. Now for any $x\in N$ with $\pr_p\pi x=y$ one gets
$(d\eta')^p_x=0$.

We will say that a quasi-Sasakian manifold $(M,\phi,\xi,\eta,h)$ is of
\emph{constant rank} $2p+1$ if
$(\eta\wedge d\eta)^p_x\not=0$ at every point $x\in M$ and $(d\eta)^{p+1}=0$.
A (partial) answer to the above question should produce an invariant of $\Gamma$ that
guarantees or obstructs the existence of such a structure.

We also can make the same question but with the constant rank in mind.
\begin{question}
\label{q3}
For which cocompact subgroups $\Gamma$ of  $G=\heis(1,l)\times \R^{2(n-l)}$
there is a quasi-Sasakian structure on $\Gamma\backslash G$ of constant rank $2p+1$ for a fixed
integer~\mbox{$p>l$}?.
\end{question}
Differently from the Question~\ref{q1}, in this case we don't know if such
$\Gamma$'s exist. Moreover, we do not even know the answer to the following
\begin{question}
\label{q4}
Is it possible to have a quasi-Sasakian structure on a compact manifold
$M^{2n+1}$
of index $l$ and constant rank $2p+1$ with $l<p< n$?
\end{question}

\bibliography{vaisman}
\bibliographystyle{amsplain}

\end{document}